\documentclass[11pt]{article}
\usepackage{amsfonts}
\usepackage{mathrsfs}
\usepackage{bbm}
\usepackage{amsfonts}
\usepackage{amssymb,amsmath,amsthm,graphicx}
\usepackage{float}
\usepackage[colorlinks, linkcolor=red]{hyperref}
\usepackage{color, xcolor}
\usepackage{cite}
\usepackage[paper=a4paper, left=1.6cm, right=1.6cm, top=1.8cm, bottom=1.6cm, headheight=5.5pt, footskip=0.8cm, footnotesep=0.8cm, centering, includefoot]{geometry}


\hypersetup{urlcolor=red, citecolor=red}

\DeclareMathOperator{\divv}{div}
\DeclareMathOperator{\curl}{curl}

\allowdisplaybreaks

\begin{document}
\title{Global existence of strong solutions with large oscillations and vacuum to the compressible nematic liquid crystal flows in 3D bounded domains
\thanks{
This research was partially supported by National Natural Science Foundation of China (No. 12371227), Scientific Research Foundation of Jilin Provincial Education Department (No. JJKH20210873KJ), and Postdoctoral Science Foundation of China (No. 2021M691219).}
}
\author{Yang Liu$\,^{\rm 1},$\ Xin Zhong$\,^{\rm 2}\,${\thanks{Corresponding author. E-mail addresses: liuyang0405@ccsfu.edu.cn (Y. Liu), xzhong1014@amss.ac.cn (X. Zhong).}}
\date{}\\
\footnotesize $^{\rm 1}\,$
College of Mathematics, Changchun Normal
University, Changchun 130032, P. R. China\\
\footnotesize $^{\rm 2}\,$ School of Mathematics and Statistics, Southwest University, Chongqing 400715, P. R. China} \maketitle
\newtheorem{theorem}{Theorem}[section]
\newtheorem{definition}{Definition}[section]
\newtheorem{lemma}{Lemma}[section]
\newtheorem{proposition}{Proposition}[section]
\newtheorem{corollary}{Corollary}[section]
\newtheorem{remark}{Remark}[section]
\renewcommand{\theequation}{\thesection.\arabic{equation}}
\catcode`@=11 \@addtoreset{equation}{section} \catcode`@=12
\maketitle{}

\begin{abstract}
We investigate compressible nematic liquid crystal flows in three-dimensional (3D) bounded domains with slip boundary condition for velocity and Neumann boundary condition for orientation field. By applying piecewise-estimate method and delicate analysis based on the effective viscous flux and vorticity, we derive the global existence and uniqueness of strong solutions provided that the initial total energy is suitably small. Our result is an extension of the works of Huang--Wang--Wen (J. Differential Equations 252: 2222--2265, 2012) and Li--Xu--Zhang (J. Math. Fluid Mech. 20: 2105--2145, 2018), where the local strong solutions in three dimensions and the global strong solutions for 3D Cauchy problem were established, respectively. Moreover, it also shows that blow up mechanism for local strong solutions obtained by Huang--Wang--Wen (Arch. Ration. Mech. Anal. 204: 285--311, 2012) cannot occur if the initial total energy is sufficiently small.
\end{abstract}

\textit{Key words and phrases}. Compressible nematic liquid crystal flows; global strong solutions; slip boundary condition; large oscillations.

2020 \textit{Mathematics Subject Classification}. 76A15; 76N10; 35Q35.


\section{Introduction}

Liquid crystals can form and remain in an intermediate
phase of matter between liquids and solids. When a solid melts, if
the energy gain is enough to overcome the positional order but the
shape of the molecules prevents the immediate collapse of
orientational order, liquid crystals are formed. The nematic liquid crystals exhibit long-range ordering in the sense that their rigid rod-like molecules arrange themselves with their long axes parallel to each other. Their molecules float around as in a liquid, but have the tendency to align along a preferred direction due to their orientation. The continuum theory of the nematic liquid crystals was first developed by Ericksen \cite{E1962} and Leslie \cite{L1968} during the period of 1958 through 1968. The rigorous mathematical analysis of the Ericksen-Leslie model was first made by Lin \cite{L1989} and Lin-Liu \cite{LL1995,LL96,LL00}, in which they introduced a considerably simplified version and proved the existence of global weak solutions and their partial regularities. Regarding modeling and analysis of the Ericksen-Leslie equations describing nematic liquid crystals, please refer to
the survey papers \cite{HP18,LW14,Z21} as well as the references therein
for more discussions on the physics and mathematical results.

In this paper, let $\Omega\subset \mathbb{R}^3$ be a bounded domain, we consider a simplified hydrodynamic flow modeling compressible nematic liquid crystal materials in $\Omega\times(0,T)$:
\begin{align}
 \rho_t+\divv(\rho u)&=0,\label{a1}\\
\rho u_t+\rho u\cdot\nabla u+\nabla P
&=\mathcal Lu
-\Delta d\cdot\nabla d,\label{a2}\\
d_t+u\cdot \nabla d&=\Delta d+|\nabla d|^2d,\label{a3}
\end{align}
where $\rho:\Omega\times(0,T)\rightarrow \mathbb{R}^+$ is the density, $u:\Omega\times(0,T)\rightarrow\mathbb{R}^3$ is the velocity field, $d:\Omega\times(0,T)\rightarrow\mathbb{S}^2$ represents
the macroscopic average of the nematic liquid crystal orientation field, $P(\rho)=a\rho^\gamma\ (a>0, \gamma>1)$ is the pressure,
while $\mathcal L$ denotes the Lam\'e operator defined by
\begin{align*}
\mathcal Lu=(\lambda+\mu)\nabla \divv u+\mu\Delta u,
\end{align*}
where $\mu$ and $\lambda$ are the shear viscosity and the bulk viscosity coefficients of the fluid, respectively, which satisfy the physical restrictions
\begin{align*}
\mu>0, \quad 3\lambda+2\mu\ge 0.
\end{align*}
The system \eqref{a1}--\eqref{a3} will be studied along with
the initial condition
\begin{align}
(\rho, \rho u, d)(x, 0)=(\rho_0, \rho_0u_0, d_0)(x), \quad x\in\Omega,
\end{align}
and boundary condition for $(u,d)$:
\begin{align}\label{a6}
u\cdot n=0,~ \curl u\times n=0, \quad \frac{\partial d}{\partial n}=0, \quad {\rm on}\ \ \partial\Omega\times(0,T),
\end{align}
where $n=(n^1, n^2, n^3)$ is the unit outward normal vector to $\partial\Omega$.

From the mathematical point of view, system \eqref{a1}--\eqref{a3} is a strongly coupled system between the compressible Navier-Stokes equations (see, e.g., \cite{CJ21,NS04}) and the transported heat flows of harmonic map (see, e.g., \cite{W11}).
In the last few years, there have been substantial developments on the mathematical study for such a model. Among them, Wang and Yu \cite{WY12} obtained
existence and large-time behavior of a global weak solution through
a three-level approximation, energy estimates, and weak convergence for the adiabatic
exponent $\gamma>\frac32$.
Lin--Lai--Wang \cite{LLW15} established the existence of finite energy weak solutions with the large initial data provided that the initial orientational director field $d_0$ lies in $\mathbb{S}_2^+$.
When the initial data are of small energy and initial density is positive and essentially bounded, Wu and Tan \cite{WT18} proved the global existence of a weak solution in $\mathbb{R}^3$ and established the large-time behavior of such a weak solution.
At the same time, for 2D case, Jiang--Jiang--Wang \cite{JJW2013} investigated the existence of global weak solutions in a bounded domain under a restriction imposed on the initial energy. Moreover, they \cite{JJW14} also obtained the global existence of finite energy weak solutions to the 2D Cauchy problem under the assumption that the second component of initial data of direction field satisfied some geometric angle condition.

On the other hand, there are some results devoting to the existence of strong solutions.
Applying local well-posedness and uniform estimates for solutions of proper linearized systems with convective terms, Hu and Wu \cite{HW13} showed global existence and uniqueness of strong solutions in critical Besov spaces provided that the initial data are close to an equilibrium state $(1, 0, \bar d)$ with a constant vector $\bar d\in\mathbb{S}^2$. Gao--Tao--Yao \cite{GTY16} investigated the global existence of classical solution under the assumption that the initial data are close to the constant equilibrium state in $H^N (\mathbb{R}^3)\ (N\geq3)$-framework and proved algebraic time decay for the classical solution by weighted energy method. Schade and Shibata \cite{SS15} established the unique existence of local-in-time strong solutions in a uniform $W^{3-\frac1q}_q$-domain $\Omega \subseteq \mathbb R^N$ for $N<q<\infty$. Furthermore, if $\Omega$ is bounded and initial data are chosen suitably small, they derived global-in-time strong solutions.
When the initial density allows for vacuum (i.e. the initial density may
vanish in some open sets), the issue of the existence of solutions becomes much more complicated due to the possible degeneracy near vacuum. Under the condition that the initial data satisfies a compatibility condition
\begin{align}\label{1.8}
-\mu\Delta u_0-(\lambda+\mu)\nabla{\rm div}\,u_0+\nabla P_0+
\nabla d_0\cdot\Delta d_0=\sqrt{\rho_0}g
\end{align}
for some $g\in L^2$, Huang-Wang-Wen \cite{HWW2012} obtained the short existence and uniqueness of strong solutions to the 3D initial boundary value problem with vacuum. Later on, Li--Xu--Zhang \cite{LXZ18} established the unique global classical solutions to the 3D Cauchy problem with smooth initial data which are of small energy but possibly large oscillations with the constant state as the far-field condition. By weighted energy method, the global well-posedness of strong solutions containing vacuum in $\mathbb{R}^2$ was studied by Wang \cite{W16}. There are also other interesting studies on the compressible nematic liquid crystal flows, such as singularity formation of strong solutions \cite{HWW12,WH18}, incompressible limit \cite{DHWZ13,WY14}, and the optimal time-decay rates \cite{BWY18} and so on.

However, all of the results on global strong solutions mentioned above only concern with the whole space or with non-vacuum. It remains completely open for the global existence of strong solutions of compressible nematic liquid crystal flows with density containing vacuum initially in general bounded domains. So, motivated by the recent work due to Cai and Li \cite{CJ21}, where the authors proved global classical solutions with vacuum and small initial energy for the 3D initial-boundary-value problem of compressible isentropic Navier-Stokes equations with slip boundary condition, the main aim of this paper is to investigate the global well-posedness of strong solutions with large oscillations and vacuum to the initial-boundary-value problem \eqref{a1}--\eqref{a6} when the initial energy is suitably small.

Before stating our main result, we first explain the notation and conventions
used throughout the paper. We denote the initial total energy of \eqref{a1} by
\begin{align}
E_0\triangleq\int\Big(\frac12\rho_0|u_0|^2+\frac12|\nabla d_0|^2+G(\rho_0)\Big)dx,
\end{align}
where
\begin{align}\label{1.7}
G(\rho)\triangleq\rho\int_{\bar{\rho}}^\rho\frac{P(\xi)-\bar{P}}{\xi^2}d\xi, \quad \bar{\rho}\triangleq\frac{1}{|\Omega|}\int\rho_0dx,
\quad \bar{P}\triangleq P(\bar{\rho}).
\end{align}
Moreover, we write
\begin{align*}
&H^1_{\omega} \triangleq\left\{v\in H^{1}(\Omega): v\cdot n=0\ \text{and}\ \curl v\times n=0\ \text{on}\ \partial\Omega\right\},\\
&H_{n}^{2}\triangleq\{v\in H^{2}(\Omega): \nabla v\cdot n=0\ \text{on}\ \partial\Omega\}.
\end{align*}

Now we state our main result concerning global strong solutions to the problem \eqref{a1}--\eqref{a6}.
\begin{theorem}\label{thm1}
Let $\Omega$ be a bounded simply connected smooth domain in $\mathbb{R}^3$ and its boundary $\partial\Omega$ has a finite number
of two-dimensional connected components. For some given constants $M_1$, $M_2>0$ (not necessarily small) and $\hat{\rho}\ge \bar{\rho}+1$,
suppose that the initial data  $(\rho_0, u_0, d_0)$ satisfying, for $q\in (3, 6)$,
\begin{align}\label{a10}
\begin{cases}
0\le \rho_0\le \hat{\rho}, \ (\rho_0, P(\rho_0))\in W^{1, q},\ u_0\in H^1_{\omega},\ d_0\in H_n^2, \\
\|\nabla u_0\|_{L^2}^2\le M_1, \ \|\Delta d_0\|_{L^2}^2\le M_2,\ |d_0|=1.
\end{cases}
\end{align}
 There exists a positive constant $\varepsilon$ depending only on $\mu$, $\lambda$,
$\gamma$, $a$, $\hat{\rho}$, $\Omega$, $M_1$, and $M_2$ such that if
\begin{align}
E_0\le \varepsilon,
\end{align}
the problem \eqref{a1}--\eqref{a6} has a unique global strong solution $(\rho, u, d)$ in $\Omega\times(0, \infty)$
satisfying
\begin{align}
0\le \rho(x, t)\le 2\hat{\rho}, \quad (x, t)\in \Omega\times(0, \infty),
\end{align}
and for any $0<\tau<T<\infty$,
\begin{align}
\begin{cases}
(\rho, P)\in C([0, T]; W^{1, q}),\\
\nabla u\in C([\tau, T]; H^1)\cap L^2(\tau, T; W^{2, q}),\\
u_t\in L^2(\tau, T; H^1), \sqrt{\rho}u_t\in L^\infty(\tau, T; L^2),\\
\nabla d\in C([\tau, T]; H^2)\cap L^2(\tau, T; H^3), |d|=1,\\
d_t\in C([0, T];L^2)\cap L^2(\tau, T; H^2).
\end{cases}
\end{align}
\end{theorem}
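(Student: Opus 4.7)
My plan is to treat the Huang-Wang-Wen local existence theorem as a black box and extend its solutions globally by establishing time-uniform a priori estimates under the smallness condition $E_0\le\varepsilon$, following the Hoff, Huang-Li-Xin and Li-Xu-Zhang framework, adapted to the slip boundary via the Cai-Li toolkit. Since the data in Theorem \ref{thm1} are not assumed to satisfy the compatibility condition \eqref{1.8}, I would first mollify $(\rho_0,u_0,d_0)$ into smooth data $(\rho_0^\delta,u_0^\delta,d_0^\delta)$ that do satisfy \eqref{1.8}, apply Huang-Wang-Wen to obtain local strong solutions, derive a priori bounds independent of $\delta$ and of the existence time $T_*$, and pass to the limit. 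A continuation argument then upgrades local to global existence once the density upper bound $\rho\le 2\hat\rho$ and the higher-order norms listed in Theorem \ref{thm1} are known to stay under control on any finite interval.

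The heart of the argument is a continuity argument on the functional
\[
A(T)=\sup_{0\le t\le T}\!\big(\|\nabla u\|_{L^2}^2+\|\nabla^2 d\|_{L^2}^2\big)+\int_0^T\!\big(\|\sqrt\rho\dot u\|_{L^2}^2+\|\nabla^3 d\|_{L^2}^2\big)dt,
\]
together with a time-weighted companion controlling $\sigma\|\sqrt\rho\dot u\|_{L^2}^2$ and $\sigma\|\nabla d_t\|_{L^2}^2$, with $\sigma(t)=\min\{1,t\}$. The basic energy identity, obtained by testing \eqref{a2} against $u$ and \eqref{a3} against $-\Delta d-|\nabla d|^2 d$ (legitimate because $|d|\equiv 1$ is preserved via the scalar transport-diffusion equation for $|d|^2-1$), controls $\sqrt\rho u$ and $\nabla d$ in $L^\infty_tL^2$ by $\sqrt{E_0}$. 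First-order bounds come from testing \eqref{a2} against the material derivative $\dot u$; weighted second-order bounds follow by time-differentiating the equations and testing against $u_t$ and $d_{tt}$. The crucial nonlinear absorption is then performed through the effective viscous flux
\[
F=(2\mu+\lambda)\divv u-(P-\bar P)+\tfrac12\big(|\nabla d|^2-\overline{|\nabla d|^2}\big),
\]
which together with the vorticity $\omega=\curl u$ satisfies elliptic equations whose right-hand sides involve only $\rho\dot u$ and nematic-coupling terms, with Neumann-type boundary data extracted from $\curl u\times n=0$ and $\partial d/\partial n=0$; this yields $L^p$ estimates on $F$ and $\omega$ much sharper than any direct bound on $\nabla^2 u$.

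The pointwise density bound is obtained by rewriting the continuity equation as
\[
(2\mu+\lambda)\frac{D}{Dt}\log\rho+P(\rho)-\bar P=-F-\tfrac12\big(|\nabla d|^2-\overline{|\nabla d|^2}\big)
\]
and applying Zlotnik's inequality along particle paths, so that the $L^\infty$ control of $\rho$ reduces to a uniform integral estimate of the right-hand side, closed by the effective-flux bounds together with the smallness of $E_0$. The main obstacles will be twofold. First, the bounded-domain setting produces non-vanishing boundary contributions in every integration-by-parts, which I would handle by exploiting $u\cdot n=0$ together with the identity $-\Delta u=\curl\curl u-\nabla\divv u$ and curvature boundary terms of the form $\int_{\partial\Omega}(u\cdot\nabla n\cdot u)dS$, in the style of Cai-Li. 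Second, the liquid-crystal coupling introduces new terms $\nabla d\cdot\Delta d$ and $|\nabla d|^2 d$ absent from pure Navier-Stokes, which must be absorbed by the director dissipation $\|\nabla^2 d\|_{L^2}^2$ via Gagliardo-Nirenberg interpolation and the smallness of $E_0$. The \emph{piecewise} splitting hinted at in the abstract refers to treating the intervals $[0,\sigma]$ and $[\sigma,T]$ separately: on $[0,\sigma]$ only $\|\nabla u_0\|_{L^2}^2\le M_1$ and $\|\Delta d_0\|_{L^2}^2\le M_2$ are available (not small), so constants are allowed to blow up polynomially in $t^{-1}$; on $[\sigma,T]$ the solution has been smoothed enough that the effective-flux machinery closes with small constants and yields uniform-in-$T$ bounds, completing the continuity argument.
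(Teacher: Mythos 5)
Your overall architecture — energy identity, material-derivative estimates, effective viscous flux and vorticity elliptic estimates with the Cai--Li boundary identity $(u\cdot\nabla)u\cdot n=-(u\cdot\nabla)n\cdot u$, Zlotnik's inequality along particle paths for the density bound, and a small-energy bootstrap on a weighted functional — matches the paper's. (Two incidental deviations: the paper invokes a local existence result that already dispenses with the compatibility condition \eqref{1.8} via time weights, rather than mollifying the data; and it keeps the flux as $F=(2\mu+\lambda)\divv u-(P-\bar P)$, putting the director coupling into $\divv\divv(M(d))$ on the right-hand side of the elliptic problem for $F$ — your variant with $\tfrac12|\nabla d|^2$ absorbed into $F$ is workable, modulo a sign slip in your $\log\rho$ equation.)

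The genuine gap is your reading of the ``piecewise-estimate method.'' You take it to mean the two-interval splitting $[0,\sigma]\cup[\sigma,T]$, with large constants allowed on $[0,\sigma]$ and uniform-in-$T$ bounds on $[\sigma,T]$. That splitting alone does not close the key estimate $\sup_t\sigma(t)\big(\|\nabla u\|_{L^2}^2+\|\Delta d\|_{L^2}^2\big)\le E_0^{1/2}$. The reason is that the weighted Hoff-type inequality (the analogue of \eqref{t3.20}) carries source terms such as $C\eta(t)\|\nabla d\|_{H^1}^2$ and $C(\eta+|\eta'|)\|\nabla u\|_{L^2}^2$ that are \emph{not} multiplied by small quantities, and in a bounded domain these are only integrable over unit time intervals — the basic energy gives $\int_{i-1}^{i+1}\big(\|\nabla u\|_{L^2}^2+\|\nabla^2 d\|_{L^2}^2\big)dt\le CE_0^{1/2}$ for each $i$, but their integral over $[0,T]$ grows like $E_0^{1/2}T$ (there is no decay here; the authors explicitly note in Remark 1.3 that, unlike \cite{CJ21}, exponential decay is unavailable because of the director field). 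So a single global weight $\sigma(t)$ on $[\sigma(T),T]$ accumulates a factor of $T$ and the bootstrap fails. The paper's actual device (Lemmas \ref{l35}--\ref{l36}, following Yu--Zhao) is to run the weighted estimate separately on each overlapping unit interval $(i-1,i+1]$ with the \emph{local} weight $\sigma_i(t)=\sigma(t+1-i)$, which vanishes at the left endpoint; this produces a bound $C_3E_0$ \emph{uniform in $i$}, and taking the supremum over $i$ yields $A_1(T)\le E_0^{1/2}$. The same interval-by-interval structure is what makes the Zlotnik input $b(t_2)-b(t_1)\le N_0+N_1(t_2-t_1)$ (with $N_1>0$ absorbed by $g$) verifiable on $[\sigma(T),T]$. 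Without this ingredient your continuity argument cannot be completed as stated.
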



\begin{remark}
Our Theorem \ref{thm1} generalizes the Cauchy problem \cite{LXZ18} to the case of bounded domains. However, this is a non-trivial generalization because we need to deal with many surface integrals caused by the boundary condition \eqref{a6}. Moreover, it should be noted that there is no need to require the compatibility condition \eqref{1.8} for the global existence of strong solutions via appropriate time-weighted techniques.
\end{remark}

\begin{remark}
If the initial data $(\rho_0,u_0,d_0)$ satisfies some additional regularity
and compatibility conditions, the global strong solutions obtained by Theorem \ref{thm1} become classical ones.
\end{remark}

\begin{remark}
Compared with \cite{CJ21}, where Cai and Li established the unique global classical solution to the compressible Navier-Stokes equations with slip boundary condition in 3D bounded domains, we cannot obtain exponential decay rates of solutions. This means that the orientation field acts some significant roles on the large-time behavior of the solutions.
\end{remark}

We now comment on the key analysis for the proof of Theorem \ref{thm1}. As emphasized in many related papers (see, e.g., \cite{CJ21,LXZ18}), the key issue is to derive the uniform-in-time lower-order estimates and uniform upper bound of the density. Compared with the Cauchy problem \cite{LXZ18}, the main difficulty lies in dealing with many surface integrals (see \eqref{z3.21}, \eqref{3.24}, and \eqref{z3.22} for example) caused by the boundary condition \eqref{a6}. To overcome this obstacle, motivated by \cite{CJ21}, we see that $(v\cdot\nabla v\cdot n)|_{\partial\Omega}=-(v\cdot\nabla n\cdot v)|_{\partial\Omega}$ for any smooth vector field $v$ satisfying $(v\cdot n)|_{\partial\Omega}=0$. Moreover, the trace theorem (see Lemma \ref{l23}) and Gagliardo-Nirenberg inequality (see Lemma \ref{l21}) as well as $L^p$-estimates based on the effective viscous flux (see \eqref{2.7} for the definition) and the vorticity (see Lemma \ref{l28}) play crucial roles. Nevertheless, compared with the compressible Navier-Stokes equations \cite{CJ21}, some additional difficulties arise due to the presence of the liquid crystal director field $d$. Especially, the supercritical nonlinearity $|\nabla d|^2d$ in the transported heat flow of harmonic map equation \eqref{a3} and the strong coupling nonlinear term $\Delta d\cdot\nabla d$ in the momentum equations \eqref{a2} will cause serious difficulties in the proofs of the time-independent estimates. To this end, we need to control the weighted $L^2$-estimate of the second-order spatial derivatives of $d$ (see Lemmas \ref{l33} and \ref{zl33}). Then we adopt the piecewise-estimate method introduced by Yu and Zhao \cite{YZ17}, which enables us to derive the weight estimates of $L^\infty(0,T)$-norm of $\|\nabla u\|_{L^2}^2+\|\Delta d\|_{L^2}^2$ and $\|\sqrt{\rho}\dot{u}\|_{L^2}^2+\|\nabla d_t\|_{L^2}^2$ step by step (see \eqref{z3.44} and \eqref{3.75}). Another key observation lies in the fact that the estimate of $\|\nabla\dot{u}\|_{L^2(t_1,t_2;L^2)}$ can be bounded by the initial energy and the factor $t_2-t_1$ for any $0\leq t_1< t_2\leq T$ (see \eqref{3.76}).
Once we overcome these difficulties, by virtue of Zlotnik's inequality (see Lemma \ref{l210}), it allows us to obtain the uniform upper bound of the density at large time provided that the initial energy is properly small (see Lemma \ref{l38}).

The rest of the paper is arranged as follows. In Section \ref{sec2}, we collect some known facts and give crucial $L^p$-estimates involving the effective viscous flux and the vorticity. In Subsection \ref{sec3.1}, we make some \textit{a priori} assumptions and show the uniformly \textit{a priori} estimates of local strong solutions independent of the time, while the energy estimates for the higher order derivatives are obtained in Subsection \ref{sec3.2}. Finally, we give the proof of Theorem \ref{thm1} in Section \ref{sec4}.

\section{Preliminaries}\label{sec2}
In this section, we recall some known facts and inequalities which will be used later.

First of all, by time-weighted techniques used in \cite{GLLZ20} and arguments as in \cite{HWW2012}, we can obtain the following local existence theorem of strong solutions of \eqref{a1}--\eqref{a6}. Here we omit the details for simplicity.
\begin{lemma}\label{l22}
Assume that the initial data $(\rho_0, u_0, d_0)$ satisfies the condition \eqref{a10}. Then there exists
a positive time $T_0>0$ and a unique strong solution $(\rho, u, d)$ of the problem \eqref{a1}--\eqref{a6} in $\Omega\times(0, T_0]$.
\end{lemma}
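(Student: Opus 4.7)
The plan is to follow the framework of Huang--Wang--Wen \cite{HWW2012} while dispensing with their compatibility condition \eqref{1.8} through the time-weighted approach of \cite{H21}. The overall scheme comprises three stages: (i) regularize the initial density to remove vacuum and enforce the compatibility condition at the approximate level, (ii) apply the Huang--Wang--Wen local theory on the regularized data, and (iii) pass to the limit after extracting bounds that are uniform in the regularization parameter.

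For stage (i), I would set $\rho_0^\eta = \rho_0 + \eta$ for small $\eta>0$ and choose $u_0^\eta$ as the solution of the stationary problem $\mathcal L u_0^\eta = \nabla P(\rho_0^\eta) + \nabla d_0\cdot\Delta d_0 + \sqrt{\rho_0^\eta}\,g$ under the slip boundary condition \eqref{a6} for a fixed $g\in L^2$; this makes \eqref{1.8} automatic for the regularized data. Since $\rho_0^\eta\ge\eta>0$, the local existence theorem of \cite{HWW2012} applies and yields a strong solution $(\rho^\eta,u^\eta,d^\eta)$ on some interval $[0,T_\eta]$. Its construction proceeds by a Picard iteration: transport equation for density solved along characteristics using $u^\eta\cdot n=0$, Lam\'e system with slip boundary for velocity via maximal regularity, and a linear parabolic equation with Neumann boundary for the director. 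The constraint $|d^\eta|=1$ is preserved because $w=|d^\eta|^2-1$ satisfies $w_t+u^\eta\cdot\nabla w-\Delta w=2|\nabla d^\eta|^2 w$ with $w(0)=0$ and $\partial_n w=0$, so a maximum principle gives $w\equiv 0$.

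The heart of the argument is stage (iii), where one must extend $T_\eta$ to a uniform $T_0>0$ and derive $\eta$-independent bounds strong enough for compactness. Setting $\sigma(t)=\min\{1,t\}$, the standard basic energy identity furnishes uniform control of $\|\sqrt{\rho^\eta}u^\eta\|_{L^\infty_tL^2}^2+\|\nabla u^\eta\|_{L^2_tL^2}^2$ and of $\|\nabla d^\eta\|_{L^\infty_tL^2}^2+\|\Delta d^\eta\|_{L^2_tL^2}^2$. Testing \eqref{a2} with $u^\eta_t$ weighted by $\sigma$, and the time-differentiated \eqref{a3} with $d^\eta_t$ weighted by $\sigma$, then yields
\begin{equation*}
\sup_{0\le t\le T_0}\sigma(t)\bigl(\|\nabla u^\eta\|_{L^2}^2+\|\Delta d^\eta\|_{L^2}^2\bigr)+\int_0^{T_0}\sigma(t)\bigl(\|\sqrt{\rho^\eta}u^\eta_t\|_{L^2}^2+\|\nabla d^\eta_t\|_{L^2}^2\bigr)dt\le C,
\end{equation*}
with $C$ depending only on $\hat\rho,M_1,M_2,\Omega$ and the viscosity coefficients. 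The vanishing of $\sigma$ at $t=0$ absorbs the boundary-in-time term that would otherwise require \eqref{1.8}. Surface integrals from the slip boundary are tamed via the identity $v\cdot\nabla v\cdot n=-v\cdot\nabla n\cdot v$ on $\partial\Omega$ for $v\cdot n|_{\partial\Omega}=0$ advertised in the introduction, the trace inequality, and the $L^p$-estimates for the effective viscous flux and vorticity (Lemma \ref{l28}). The coupling $\Delta d\cdot\nabla d$ is rewritten in divergence form $\divv(\nabla d\odot\nabla d-\tfrac12|\nabla d|^2 I)$ to transfer one derivative off $d$.

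Once these $\eta$-uniform estimates are in hand, an Aubin--Lions compactness argument extracts a limit $(\rho,u,d)$ on $[0,T_0]$ solving \eqref{a1}--\eqref{a6} in the strong sense, with the regularity claimed in the statement; the initial condition is recovered because the basic (unweighted) energy estimate and the $W^{1,q}$-transport bound for the density yield continuity at $t=0$. Uniqueness follows by a standard energy estimate on the difference of two solutions: the density difference is controlled through the transport equation by $\|\nabla u\|_{L^1(0,T_0;L^\infty)}$, and the differences of $u$ and $d$ satisfy a coupled Gr\"onwall inequality that forces them to vanish. I expect the main obstacle to be the simultaneous treatment of the boundary contributions from the slip condition for $u^\eta$ and the Neumann condition for $d^\eta$ in the $\sigma$-weighted higher-order estimate, in particular when one integrates by parts $\Delta d^\eta\cdot\nabla d^\eta$ against $u^\eta_t$; without compatibility, several endpoint pieces involving $u^\eta_t|_{t=0^+}$ must be balanced against $\sigma(0)=0$, which is precisely the step that the approach of \cite{H21} is designed to handle and which motivates the introduction of the weight here.
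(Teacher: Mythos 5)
Your proposal is exactly the route the paper intends: it states Lemma \ref{l22} without proof, remarking only that it follows ``by time weighted techniques used in \cite{H21} and arguments as in \cite{HWW2012},'' which is precisely your scheme of regularizing the data to restore the compatibility condition, invoking the Huang--Wang--Wen local theory, and using the $\sigma$-weight to obtain $\eta$-uniform estimates before passing to the limit. The proposal is a reasonable fleshing-out of the paper's own (omitted) argument, so no further comparison is needed.
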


Next, the well-known Gagliardo-Nirenberg inequality (see \cite{N59}) will be used frequently.
\begin{lemma}\label{l21}
Assume that $\Omega$ is a bounded Lipschitz domain in $\mathbb{R}^3$. For $p\in [2, 6]$, $q\in (1, \infty)$, and $r\in (3, \infty)$,
there exist two generic constants $C_1, C_2>0$, which may depend on $p$, $q$, $r$, and $\Omega$ such that,
for any $f\in H^1(\Omega)$ and $g\in L^q(\Omega)\cap W^{1, r}(\Omega)$,
\begin{align}
&\|f\|_{L^p}\le C_1\|f\|_{L^2}^\frac{6-p}{2p}\|\nabla f\|_{L^2}^\frac{3p-6}{2p}+C_2\|f\|_{L^2},\label{2.3}\\[3pt]
&\|g\|_{L^\infty}\le C_1\|g\|_{L^q}^\frac{q(r-3)}{3r+q(r-3)}\|\nabla g\|_{L^r}^\frac{3r}{3r+q(r-3)}+C_2\|g\|_{L^2}.
\end{align}
Moreover, if $(f\cdot n)|_{\partial\Omega}=0$ and $(g\cdot n)|_{\partial\Omega}=0$, then $C_2=0$.
\end{lemma}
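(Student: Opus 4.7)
The plan is to reduce both displayed inequalities to the classical Gagliardo--Nirenberg inequalities on $\mathbb{R}^{3}$ via an extension operator, and then to handle the improvement $C_{2}=0$ separately by a Poincar\'e-type argument. The overall strategy is standard; the only nontrivial point is identifying the correct Poincar\'e statement under the vanishing normal-trace hypothesis.

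On the whole space, the two inequalities
\begin{align*}
\|F\|_{L^{p}(\mathbb{R}^{3})} &\le C\|F\|_{L^{2}}^{(6-p)/(2p)}\|\nabla F\|_{L^{2}}^{(3p-6)/(2p)},\quad 2\le p\le 6,\\
\|G\|_{L^{\infty}(\mathbb{R}^{3})} &\le C\|G\|_{L^{q}}^{q(r-3)/(3r+q(r-3))}\|\nabla G\|_{L^{r}}^{3r/(3r+q(r-3))},\quad r>3,
\end{align*}
follow from the Sobolev embedding $H^{1}\hookrightarrow L^{6}$ combined with H\"older interpolation, and from Morrey's embedding $W^{1,r}\hookrightarrow L^{\infty}$ interpolated against $L^{q}$; the exponents are pinned down by scaling. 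To transfer them to $\Omega$, I would invoke a Stein-type extension operator $E\colon W^{1,s}(\Omega)\to W^{1,s}(\mathbb{R}^{3})$, available on any bounded Lipschitz domain, satisfying $\|Ef\|_{L^{s}(\mathbb{R}^{3})}\le C\|f\|_{L^{s}(\Omega)}$ and $\|\nabla Ef\|_{L^{s}(\mathbb{R}^{3})}\le C(\|f\|_{L^{s}(\Omega)}+\|\nabla f\|_{L^{s}(\Omega)})$. Applying the whole-space estimate to $Ef$ and decoupling via the elementary bound $(a+b)^{\theta}\le a^{\theta}+b^{\theta}$ for $\theta\in(0,1]$ (which applies with $\theta=(3p-6)/(2p)\le 1$ precisely because $p\le 6$) splits the resulting $H^{1}$-factor into $\|\nabla f\|_{L^{2}}^{\theta}$ plus a term bounded by $\|f\|_{L^{2}}^{\theta}$; the latter is then absorbed into the additive $C_{2}\|f\|_{L^{2}}$. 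The $L^{\infty}$ estimate is handled by the same template.

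For the improvement $C_{2}=0$ under the condition $(f\cdot n)|_{\partial\Omega}=0$, my plan is to combine the previous step with a Poincar\'e inequality of the form $\|f\|_{L^{2}(\Omega)}\le C\|\nabla f\|_{L^{2}(\Omega)}$ valid in the appropriate class: for tangential vector fields on a bounded simply connected smooth domain this is a consequence of the div--curl estimate, while for mean-zero scalar fields it is the classical Poincar\'e--Wirtinger inequality. Given such a bound, the residual $C_{2}\|f\|_{L^{2}}$ produced by the extension argument is majorized by $C\|f\|_{L^{2}}^{(6-p)/(2p)}\|\nabla f\|_{L^{2}}^{(3p-6)/(2p)}$ and can be absorbed into the first right-hand-side term, closing the estimate with $C_{2}=0$. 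This last step is where I expect the bulk of the verification to live, since one must match the sharp form of the Poincar\'e inequality to the precise functional setting in which the lemma will be invoked; the remainder of the proof is routine bookkeeping.
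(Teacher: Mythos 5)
The paper does not actually prove this lemma: it is quoted as "the well-known Gagliardo--Nirenberg inequality" with a citation to Nirenberg's paper, so there is no in-paper argument to compare against. Judged on its own terms, your outline is the standard and correct route: whole-space Gagliardo--Nirenberg plus a Stein extension produces the additive $C_2\|\cdot\|_{L^2}$ term (and for the first inequality the bookkeeping is exactly as you say, since $\tfrac{6-p}{2p}+\tfrac{3p-6}{2p}=1$), and the $C_2=0$ refinement does come from a Poincar\'e inequality on the subspace $\{f\in H^1:\ f\cdot n|_{\partial\Omega}=0\}$. That Poincar\'e inequality is correctly identified as the crux; note it holds on any bounded Lipschitz domain by the usual compactness argument, because the only constant vector $c$ with $c\cdot n=0$ a.e.\ on $\partial\Omega$ is $c=0$ (integrate $\divv((x\cdot c)\,c)$ over $\Omega$), so you need not invoke the div--curl estimate or simple connectedness here.

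The one place where "the same template" is not literally true is the $L^\infty$ inequality. After extending and splitting $\|g\|_{W^{1,r}}$, the leftover term is $C\|g\|_{L^q}^{a}\|g\|_{L^r}^{1-a}$ (with $a=\tfrac{q(r-3)}{3r+q(r-3)}$), which is not bounded by $C\|g\|_{L^2}$ on a bounded domain; you must interpolate $\|g\|_{L^r}$ (and, when $q>2$, $\|g\|_{L^q}$) between $L^2$ and $L^\infty$ and then absorb the resulting power of $\|g\|_{L^\infty}$, which is strictly less than one, into the left-hand side by Young's inequality. A similar interpolation-plus-absorption is needed to convert the residual $C_2\|g\|_{L^2}$ into the product $\|g\|_{L^q}^{a}\|\nabla g\|_{L^r}^{1-a}$ when proving $C_2=0$ for $g$, with a small case distinction according to whether $q\ge 2$ or $q<2$. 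These are routine repairs, but they are genuinely missing from the write-up as it stands; with them supplied, the proof is complete.
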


Next, the following trace theorem (see \cite[p. 272]{E10}) plays an important role in dealing with the boundary integral in the next section.
\begin{lemma}\label{l23}
Assume that $\Omega$ is a bounded domain and $\partial\Omega$ is $C^1$. Then there exists a bounded linear operator
\begin{align*}
T:W^{1,p}(\Omega)\rightarrow L^p(\partial\Omega),\ 1\leq p<\infty
\end{align*}
such that
\begin{align*}
Tu=u|_{\partial\Omega}\ \ \text{for}\ \ u\in W^{1,p}(\Omega)\cap C(\overline{\Omega}),
\end{align*}
and
\begin{align*}
\|Tu\|_{L^p(\partial\Omega)}\leq C\|u\|_{W^{1,p}(\Omega)}\ \ \text{for}\ \ u\in W^{1,p}(\Omega),
\end{align*}
with the constant $C$ depending only on $p$ and $\Omega$.
\end{lemma}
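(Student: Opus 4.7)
The plan is to prove the trace estimate first on $C^1$ functions by reducing to the half-space via local boundary flattening, and then extend by density. Throughout, the key analytic inequality is the fundamental theorem of calculus, combined with Young's and H\"older's inequalities, applied across the boundary in a flattened chart.

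First I would fix a finite open cover $\{U_k\}_{k=0}^N$ of $\overline{\Omega}$ with $U_0\Subset\Omega$ and each $U_k$ ($k\ge 1$) chosen so that, after a $C^1$ change of variables $\Phi_k:U_k\to B_1$, we have $\Phi_k(U_k\cap\Omega)=B_1^+:=B_1\cap\{y_3>0\}$ and $\Phi_k(U_k\cap\partial\Omega)=B_1\cap\{y_3=0\}$; this is possible because $\partial\Omega$ is $C^1$. Pick a subordinate partition of unity $\{\zeta_k\}$. Because the change of variables is bi-Lipschitz with $C^1$ inverse, $u\in W^{1,p}(\Omega)$ transforms to $\tilde u:=(\zeta_k u)\circ\Phi_k^{-1}\in W^{1,p}(B_1^+)$ with compact support in $B_1$, and the $W^{1,p}$ norms are equivalent up to constants that depend only on $\Omega$ and $p$.

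Next I would establish the core half-space estimate: for every $v\in C^1(\overline{\mathbb{R}^3_+})$ with $\operatorname{supp} v\subset \overline{B_1^+}$,
\begin{equation*}
\int_{\mathbb{R}^2}|v(y',0)|^p\,dy' \le C\int_{\mathbb{R}^3_+}\bigl(|v|^p+|\partial_{y_3}v|^p\bigr)\,dy.
\end{equation*}
The derivation uses $|v(y',0)|^p = -\int_0^\infty \partial_{y_3}(\chi(y_3)|v(y',y_3)|^p)\,dy_3$ with a cutoff $\chi$ that equals $1$ near $0$ and vanishes outside $\operatorname{supp} v$, expanding the derivative by the chain rule $\partial_{y_3}|v|^p = p|v|^{p-2}v\,\partial_{y_3}v$ (interpreted via standard approximation when $p=1$), and then applying Young's inequality $|v|^{p-1}|\partial_{y_3}v|\le \tfrac{p-1}{p}|v|^p + \tfrac{1}{p}|\partial_{y_3}v|^p$; integrating in $y'$ yields the displayed bound. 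Applying this to $\tilde u$ in each chart, pulling back via $\Phi_k$ (which only introduces bounded Jacobian factors on $\partial\Omega$), and summing over $k$ gives, for $u\in C^1(\overline{\Omega})$,
\begin{equation*}
\|u\|_{L^p(\partial\Omega)}^p \le C\sum_k \bigl(\|\zeta_k u\|_{L^p(\Omega)}^p + \|\nabla(\zeta_k u)\|_{L^p(\Omega)}^p\bigr) \le C\|u\|_{W^{1,p}(\Omega)}^p,
\end{equation*}
with $C=C(p,\Omega)$.

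Finally, I would extend by density. Since $\partial\Omega\in C^1$, Meyers--Serrin together with a standard boundary-approximation argument (reflect across a flattened boundary, mollify, and restore) produces, for every $u\in W^{1,p}(\Omega)$ with $1\le p<\infty$, a sequence $\{u_m\}\subset C^\infty(\overline{\Omega})\cap W^{1,p}(\Omega)$ with $u_m\to u$ in $W^{1,p}(\Omega)$. The estimate just established shows $\{u_m|_{\partial\Omega}\}$ is Cauchy in $L^p(\partial\Omega)$, so it converges to some $Tu\in L^p(\partial\Omega)$; the map $T$ is well-defined (independent of the approximating sequence), linear, satisfies $\|Tu\|_{L^p(\partial\Omega)}\le C\|u\|_{W^{1,p}(\Omega)}$ by passage to the limit, and agrees with the pointwise restriction whenever $u\in W^{1,p}(\Omega)\cap C(\overline{\Omega})$ (use the uniform approximation on compact pieces of $\overline{\Omega}$). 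The main technical obstacle is bookkeeping the transformation of the $W^{1,p}$ norm under the $C^1$ chart maps $\Phi_k$ and handling the endpoint $p=1$ in the chain rule step; these are addressed by approximating $|v|^p$ by $(|v|^2+\varepsilon)^{p/2}$ and sending $\varepsilon\downarrow 0$.
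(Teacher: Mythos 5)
This lemma is stated in the paper as a citation to Evans \cite[p.\ 272]{E10} with no in-text proof. Your argument---boundary flattening into half-space charts, the half-space trace inequality via the fundamental theorem of calculus together with Young's inequality, and then extension from $C^\infty(\overline{\Omega})$ by density---is exactly the standard proof from that reference (including the $(|v|^2+\varepsilon)^{p/2}$ regularization needed at $p=1$), so it is correct and coincides with the source the paper relies on.
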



The following two lemmas are given in \cite{JA14, W92}.
\begin{lemma}\label{l24}
Let $k\ge 0$ be an integer and $1<r<\infty$. Assume that $\Omega$ is a simply connected bounded domain in $\mathbb{R}^3$ with
$C^{k+1, 1}$ boundary $\partial\Omega$. Then, for $v\in W^{k+1, r}(\Omega)$ with $(v\cdot n)|_{\partial\Omega}=0$, it holds that
\begin{align*}
\|v\|_{W^{k+1, r}}\le C\big(\|{\rm div}\,v\|_{W^{k, r}}+\|{\rm curl}\,v\|_{W^{k, r}}\big).
\end{align*}
In particular, for $k=0$, we have
\begin{align*}
\|\nabla v\|_{L^r}\le C\big(\|{\rm div}\,v\|_{L^r}+\|{\rm curl}\,v\|_{L^r}\big).
\end{align*}
\end{lemma}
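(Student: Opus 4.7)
The plan is to establish the estimate through a Hodge-type decomposition that splits $v$ into its gradient part and its divergence-free part, each then controlled by the elliptic regularity of an associated boundary value problem. Since the statement is formulated for general $k\ge 0$, I would prove the base case in $W^{1,r}$ first and bootstrap the higher regularity via iterated elliptic estimates on the $C^{k+1,1}$ boundary.

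First I would introduce a scalar potential $\phi$ defined as the unique (up to an additive constant) weak solution of the Neumann problem
\begin{equation*}
\Delta \phi=\divv v \quad \text{in } \Omega, \qquad \frac{\partial \phi}{\partial n}=0 \quad \text{on } \partial\Omega.
\end{equation*}
The compatibility condition $\int_\Omega \divv v\,dx = \int_{\partial\Omega} v\cdot n\,dS = 0$ is automatic from the hypothesis $v\cdot n|_{\partial\Omega}=0$, and standard $L^r$ elliptic regularity for the Neumann problem on a $C^{k+1,1}$ domain yields
\begin{equation*}
\|\nabla \phi\|_{W^{k+1,r}} \le C\|\divv v\|_{W^{k,r}}.
\end{equation*}

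Setting $w:=v-\nabla \phi$, one checks directly that $\divv w=0$, $\curl w=\curl v$, and $w\cdot n|_{\partial\Omega}=0$. Here the hypothesis that $\Omega$ is simply connected with smooth boundary becomes crucial: on such a domain the space of harmonic Neumann fields (divergence-free, curl-free, vanishing normal trace) is trivial, so $w$ admits a vector potential representation $w=\curl \psi$ with $\psi$ chosen in the gauge $\divv \psi=0$ in $\Omega$ and $\psi\times n=0$ on $\partial\Omega$. Using the identity $\curl\,\curl-\nabla\divv=-\Delta$, the potential $\psi$ solves
\begin{equation*}
-\Delta \psi=\curl v \quad \text{in } \Omega, \qquad \divv \psi=0,\ \ \psi\times n=0 \quad \text{on } \partial\Omega,
\end{equation*}
which is an elliptic boundary value problem in the Agmon--Douglis--Nirenberg sense. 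The corresponding $L^r$-estimate yields $\|\psi\|_{W^{k+2,r}}\le C\|\curl v\|_{W^{k,r}}$, whence
\begin{equation*}
\|w\|_{W^{k+1,r}}=\|\curl \psi\|_{W^{k+1,r}}\le C\|\curl v\|_{W^{k,r}}.
\end{equation*}

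Adding the two estimates gives $\|v\|_{W^{k+1,r}}\le \|\nabla \phi\|_{W^{k+1,r}}+\|w\|_{W^{k+1,r}}\le C(\|\divv v\|_{W^{k,r}}+\|\curl v\|_{W^{k,r}})$, which is the claimed inequality; the special case $k=0$ then arises as a direct specialization. The main obstacle I expect is the existence of the vector potential $\psi$ in a well-posed gauge: it is precisely the simply-connected hypothesis together with the smoothness of $\partial\Omega$ that guarantees the triviality of harmonic Neumann fields and the verification of the complementing (Lopatinskii--Shapiro) condition for the elliptic system satisfied by $\psi$. Once these structural facts are granted, the argument reduces to two standard elliptic estimates plus the scalar potential extraction.
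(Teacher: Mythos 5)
First, a point of reference: the paper does not prove this lemma at all --- it is quoted from the cited works of Aramaki and von Wahl --- so you are supplying a proof where the authors supply a citation. Your strategy (a Helmholtz-type splitting $v=\nabla\phi+\curl\psi$, with a Neumann problem for the scalar potential and an Agmon--Douglis--Nirenberg elliptic system for the vector potential) is exactly the standard route taken in those references, and the first half is fine: the compatibility condition for the Neumann problem follows from $v\cdot n|_{\partial\Omega}=0$, and $\|\nabla\phi\|_{W^{k+1,r}}\le C\|\divv v\|_{W^{k,r}}$ is correct on a $C^{k+1,1}$ domain.

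The gap is in the vector-potential step, and it is not cosmetic: as written, your argument never uses simple connectivity in a load-bearing way, so it would equally ``prove'' the estimate on a solid torus, where the lemma is false (the harmonic Neumann field dual to the generator of $H_1(\Omega)$ is a nonzero $v$ with $\divv v=\curl v=0$ and $v\cdot n=0$). Two things need repair. (i) You justify the existence of $\psi$ by the triviality of harmonic Neumann fields; that is the wrong cohomological fact. Representing a divergence-free $w$ as a curl is governed by the second de Rham cohomology, equivalently by the vanishing of the flux of $w$ through each component of $\partial\Omega$ --- which here follows from $w\cdot n=0$ and needs no topological hypothesis at all. Triviality of harmonic Neumann fields is instead the statement that the \emph{first} cohomology vanishes, i.e.\ simple connectivity, and it is needed elsewhere. (ii) The estimate $\|\psi\|_{W^{k+2,r}}\le C\|\curl v\|_{W^{k,r}}$ does not follow from ellipticity alone: the homogeneous problem $\Delta\psi=0$ in $\Omega$ with $\divv\psi=0$ and $\psi\times n=0$ on $\partial\Omega$ has in general a nontrivial finite-dimensional kernel (already on the spherical shell, which is simply connected, $\psi=\nabla(1/|x|)$ lies in it). The missing step is to observe that any kernel element $\psi$ satisfies $\divv\psi\equiv 0$ and $\curl\curl\psi=0$ in $\Omega$ with $(\curl\psi)\cdot n=0$ on $\partial\Omega$, so $\curl\psi$ is itself a harmonic Neumann field and hence vanishes precisely because $\Omega$ is simply connected; only then can one normalize $\psi$ against the kernel and conclude that $\|\curl\psi\|_{W^{k+1,r}}$ is controlled by $\|\curl v\|_{W^{k,r}}$. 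With (i) restated as a flux condition and (ii) supplied, the proof closes and coincides with the argument in the literature the paper points to.
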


\begin{lemma}\label{l25}
Let $k\ge 0$ be an integer and $1<r<\infty$. Suppose that $\Omega$ is bounded domain in $\mathbb{R}^3$ and its $C^{k+1, 1}$
boundary $\Omega$ has a finite number of two-dimensional connected components. Then, for $v\in W^{k+1, r}(\Omega)$ with
$(v\times n)|_{\partial\Omega}=0$, we have
\begin{align*}
\|v\|_{W^{k+1, r}}\le C\big(\|{\rm div}\,v\|_{W^{1, r}}+\|{\rm curl}\,v\|_{W^{k, r}}+\|v\|_{L^r}\big).
\end{align*}
In particular, if $\Omega$ is a simply connected bounded domain, then it holds that
\begin{align*}
\|v\|_{W^{k+1, r}}\le C\big(\|{\rm div}\,v\|_{W^{k, r}}+\|\curl v\|_{W^{k, r}}\big).
\end{align*}
\end{lemma}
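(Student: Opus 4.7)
The plan is to derive the estimate from $L^r$-regularity for an elliptic boundary value problem associated with the vector Laplacian, combined with a Hodge/cohomological analysis to handle the topology of $\Omega$. I would first reduce to the base case $k=0$ by an induction on $k$: differentiating the pointwise identity
\begin{equation*}
-\Delta v=\curl\curl v-\nabla\divv v,
\end{equation*}
and using that partial derivatives of $v$ satisfy analogous boundary compatibility relations modulo tangential/normal commutators of lower order (which absorb with a factor dictated by the $C^{k+1,1}$-regularity of $\partial\Omega$). Each step then gains one derivative on both sides of the inequality, turning the base estimate into the stated $W^{k+1,r}$-version.

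For the base case, the key point is that $v\times n|_{\partial\Omega}=0$ is a regular boundary condition (in the Agmon-Douglis-Nirenberg sense) for the vector Laplacian once it is coupled with a scalar condition governing $\divv v|_{\partial\Omega}$. Standard $L^r$-elliptic theory applied to the above identity with this tangential Dirichlet-type condition then yields
\begin{equation*}
\|v\|_{W^{1,r}}\le C\bigl(\|\divv v\|_{W^{1,r}}+\|\curl v\|_{L^r}+\|v\|_{L^r}\bigr),
\end{equation*}
where $\divv v$ appears in $W^{1,r}$ rather than in $L^r$ precisely because the tangential trace of $v$ prescribes no information on $\divv v|_{\partial\Omega}$, so an additional derivative is needed to control that trace through the interior equation.

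For the simply connected case, the $\|v\|_{L^r}$ term can be absorbed by a Lions-Peetre compactness argument: the subspace of $v\in W^{1,r}$ with $v\times n=0$ on $\partial\Omega$ and $\divv v=\curl v=0$ in $\Omega$ consists of harmonic tangential vector fields of Dirichlet type, whose dimension equals the first Betti number of $\Omega$, and hence vanishes when $\Omega$ is simply connected. Alternatively one can decompose $v=\nabla\varphi+w$ with $\varphi$ solving an appropriate Neumann problem so that $w\cdot n|_{\partial\Omega}=0$, apply Lemma~\ref{l24} to $w$, and estimate $\nabla\varphi$ separately from standard elliptic regularity.

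The main obstacle I expect is verifying the Lopatinski-Shapiro complementing condition for the above ADN system at each boundary point, together with carefully tracking the contribution of the finitely many connected components of $\partial\Omega$ (these govern the lower-order correction term $\|v\|_{L^r}$ in the general case). This is precisely the technical content that makes the lemma nontrivial and justifies citing von Wahl~\cite{W92} rather than reproving it.
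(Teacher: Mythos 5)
The paper does not prove this lemma; it cites Aramaki \cite{JA14} and von Wahl \cite{W92} and moves on, so there is no internal proof to compare against. Your reconstruction has the right skeleton (elliptic $L^r$-theory for the base case, induction in $k$, a compactness argument to remove $\|v\|_{L^r}$), but two of your key claims are mistaken.

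The central error is your identification of the kernel. The space $\{v : \divv v = \curl v = 0 \text{ in } \Omega,\ v\times n = 0 \text{ on } \partial\Omega\}$ does \emph{not} have dimension equal to the first Betti number of $\Omega$; that number governs the \emph{Neumann} harmonic fields with $v\cdot n=0$, which is the setting of Lemma~\ref{l24}. For the tangential condition $v\times n=0$ the kernel dimension equals the number of connected components of $\partial\Omega$ minus one (topologically $b_2(\overline{\Omega})$, not $b_1(\Omega)$). A concrete counterexample to your reasoning: $\Omega = B_2\setminus\overline{B_1}\subset\mathbb{R}^3$ is simply connected, yet $v=\nabla(1/|x|)$ lies in your kernel. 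Hence "simply connected" by itself does not guarantee uniqueness for the Dirichlet-type harmonic fields; what the references actually use to drop $\|v\|_{L^r}$ is that $\partial\Omega$ is connected. (The surrounding paper is somewhat loose in its hypotheses here, but your asserted justification is the one that would fail.) Your alternative decomposition $v=\nabla\varphi+w$ with $\partial_n\varphi = v\cdot n$ also does not close cleanly, because the Neumann trace $v\cdot n$ must be controlled at order $k+1-1/r$ to get $W^{k+2,r}$ regularity for $\varphi$, which is exactly the quantity you are trying to estimate.

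Second, your explanation for the $W^{1,r}$-norm on $\divv v$ (``the tangential trace prescribes no information on $\divv v|_{\partial\Omega}$, so an extra derivative is needed'') is not correct. The pair $(\divv,\curl)$ with the boundary condition $v\times n=0$ already constitutes a complementing (ADN) problem without importing $\divv v|_{\partial\Omega}$ as an additional boundary datum, and the resulting estimate places $\divv v$ in $W^{k,r}$, i.e.\ at the same order as $\curl v$ (this is exactly how the statement reads in \cite{JA14}). The ``$W^{1,r}$'' in the paper's display is almost certainly a misprint for $W^{k,r}$; you have built a false rationale around a typo, and at $k=0$ your claimed base-case bound with $\|\divv v\|_{W^{1,r}}$ is weaker than the true estimate $\|v\|_{W^{1,r}}\le C(\|\divv v\|_{L^r}+\|\curl v\|_{L^r}+\|v\|_{L^r})$.
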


When $v$ satisfies $v\cdot n=0$ on $\partial\Omega$, we will also use the identity
\begin{equation}\label{2.5}
(v\cdot\nabla)v\cdot n=-(v\cdot\nabla)n\cdot v\ \ \mbox{on}\ \ \partial\Omega
\end{equation}
for any smooth vector field $v$.

The following estimates (see \cite[Lemma 2.10]{CJ21}) on the material derivative of $u$ will be useful.
\begin{lemma}
If $(\rho, u, d)$ is a smooth solution of \eqref{a1}--\eqref{a6}. Then there exists a positive constant $C$ depending only on $\Omega$ such that
\begin{align}
\|\dot{u}\|_{L^6}& \le C\big(\|\nabla\dot{u}\|_{L^2}+\|\nabla u\|_{L^2}^2\big),\label{3.2}\\
\|\nabla\dot{u}\|_{L^2}& \le C\big(\|{\rm div}\,\dot{u}\|_{L^2}+\|{\rm curl}\,\dot{u}\|_{L^2}+\|\nabla u\|_{L^4}^2\big),
\end{align}
where $\dot{f}\triangleq f_t+u\cdot\nabla f$.
\end{lemma}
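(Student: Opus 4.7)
The plan is to derive both inequalities by treating $\dot u$ as an approximate slip vector field and correcting for the boundary mismatch, then invoking the Gagliardo--Nirenberg inequality (Lemma \ref{l21}) and the curl--div estimate (Lemma \ref{l24}). The algebraic input that drives everything is that the normal trace $\dot u\cdot n|_{\partial\Omega}$ is quadratic in $u$: differentiating $u\cdot n|_{\partial\Omega}=0$ in time gives $u_t\cdot n|_{\partial\Omega}=0$, and combining with the identity \eqref{2.5} applied to $v=u$ yields
\begin{equation*}
\dot u\cdot n\big|_{\partial\Omega}=(u_t+u\cdot\nabla u)\cdot n\big|_{\partial\Omega}=-u\cdot\nabla n\cdot u\big|_{\partial\Omega}.
\end{equation*}
Thus $\dot u$ satisfies the slip condition up to a purely quadratic error in $u$ at the boundary.

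For \eqref{3.2}, I start from the Sobolev embedding $\|\dot u\|_{L^6}\le C(\|\nabla\dot u\|_{L^2}+\|\dot u\|_{L^2})$, which follows from Lemma \ref{l21} at $p=6$. To control $\|\dot u\|_{L^2}$ I decompose $\dot u=v+w$ with $w$ an $H^1$-lifting into $\Omega$ of the boundary datum $-u\cdot\nabla n\cdot u$, so that $v\cdot n=0$ on $\partial\Omega$. The improved Gagliardo--Nirenberg inequality (the $C_2=0$ clause of Lemma \ref{l21}) then applies to $v$, while standard lifting bounds give $\|w\|_{H^1(\Omega)}\le C\|u\cdot\nabla n\cdot u\|_{H^{1/2}(\partial\Omega)}$. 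Applying the trace theorem (Lemma \ref{l23}) together with the Sobolev embedding on the two-dimensional boundary $\partial\Omega$ converts this last quantity into $C\|\nabla u\|_{L^2}^2$, producing the quadratic remainder on the right-hand side of \eqref{3.2}.

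For the second inequality I apply Lemma \ref{l24} to the corrected field $v=\dot u-w$, yielding $\|\nabla v\|_{L^2}\le C(\|\operatorname{div} v\|_{L^2}+\|\operatorname{curl} v\|_{L^2})$. Writing $\nabla\dot u=\nabla v+\nabla w$ and expanding div and curl of $v$ in terms of those of $\dot u$ and $w$, the estimate reduces to a $W^{1,2}$-bound on the lifting $w$. The boundary identity supplies the correction again in the form $|u|^2$, which this time must be controlled by $\|\nabla u\|_{L^4}^2$; this is done by carrying out the same trace/lifting analysis at a higher integrability index, using in particular that $W^{1,4}\hookrightarrow L^\infty$ on the two-dimensional boundary so that pointwise squares of $u$ on $\partial\Omega$ are absorbed into $\|\nabla u\|_{L^4}^2$.

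The main obstacle is the careful bookkeeping of the lifting $w$ at the two different regularity levels: the trace and Sobolev indices must be matched so that the boundary data $u\cdot\nabla n\cdot u$ produces exactly $\|\nabla u\|_{L^2}^2$ in the first case and $\|\nabla u\|_{L^4}^2$ in the second, without spawning spurious lower-order terms. Once this matching is done, the identity \eqref{2.5} is the sole ingredient tying everything together, and the rest consists of routine applications of Lemmas \ref{l21}, \ref{l23}, and \ref{l24}.
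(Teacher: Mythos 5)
Your starting point is exactly the right one: the paper takes this lemma verbatim from \cite[Lemma 2.10]{CJ21} and offers no proof, and the whole content is the observation that $u\cdot n\equiv 0$ on $\partial\Omega$ forces $u_t\cdot n=0$ there, so that by \eqref{2.5} the normal trace $\dot u\cdot n=-u\cdot\nabla n\cdot u$ is quadratic in $u$. The treatment of the second inequality is also essentially sound: once $\|\nabla u\|_{L^4}$ is allowed on the right, $u\in W^{1,4}(\Omega)\hookrightarrow L^\infty$, so $|u|^2$ belongs to $W^{1,4}(\Omega)$ with norm $\lesssim\|\nabla u\|_{L^4}^2$ (using $\|u\|_{L^4}+\|u\|_{L^\infty}\le C\|\nabla u\|_{L^4}$ from Lemma \ref{l24}), its trace lies in $H^{1/2}(\partial\Omega)$ with the right bound, the $H^1$-lifting $w$ exists, and Lemma \ref{l24} applies to $\dot u-w$.

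The genuine gap is in your proof of \eqref{3.2}. You claim $\|w\|_{H^1(\Omega)}\le C\|u\cdot\nabla n\cdot u\|_{H^{1/2}(\partial\Omega)}\le C\|\nabla u\|_{L^2}^2$. The first inequality is the correct lifting bound, but the second is false: an $H^1$-lifting needs the boundary datum in $H^{1/2}(\partial\Omega)$, whereas for $u\in H^1(\Omega)$ the trace $u|_{\partial\Omega}$ is only in $H^{1/2}(\partial\Omega)$, and on a two-dimensional surface the product of two $H^{1/2}$ functions lands only in $L^2$ (the multiplication $H^{1/2}\times H^{1/2}\to H^{s}$ in dimension $2$ requires $s\le 1/2+1/2-1=0$), not in $H^{1/2}$. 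So $\|u\cdot\nabla n\cdot u\|_{H^{1/2}(\partial\Omega)}$ can be infinite while $\|\nabla u\|_{L^2}<\infty$, and no $H^1$-lifting with the claimed bound exists; equivalently, the direct extension $w=-(u\cdot\nabla N\cdot u)N$ has $|\nabla w|\sim|u||\nabla u|$, and $\||u||\nabla u|\|_{L^2}$ is not controlled by $\|\nabla u\|_{L^2}^2$. (A secondary imprecision: the ``$C_2=0$'' clause of Lemma \ref{l21} at $p=2$ is vacuous and does not bound $\|v\|_{L^2}$ by $\|\nabla v\|_{L^2}$; you need a Poincar\'e inequality for fields with vanishing normal trace, e.g.\ via Lemma \ref{l24}.) The repair is to avoid lifting altogether for \eqref{3.2} and use a Poincar\'e-type inequality that sees only the $L^2(\partial\Omega)$ norm of the normal trace, namely $\|v\|_{L^2(\Omega)}\le C\big(\|\nabla v\|_{L^2(\Omega)}+\|v\cdot n\|_{L^2(\partial\Omega)}\big)$, which holds by a standard compactness argument because a constant vector field cannot be tangent to a closed surface at every point. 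Combined with $\|\dot u\|_{L^6}\le C(\|\nabla\dot u\|_{L^2}+\|\dot u\|_{L^2})$ and $\|u\cdot\nabla n\cdot u\|_{L^2(\partial\Omega)}\le C\|u\|_{L^4(\partial\Omega)}^2\le C\|u\|_{H^1(\Omega)}^2\le C\|\nabla u\|_{L^2}^2$, this yields \eqref{3.2}; only the $L^2(\partial\Omega)$ norm of the quadratic datum is needed, and that one is indeed controlled by $\|\nabla u\|_{L^2}^2$.
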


Next, we introduce the effective viscous
flux of the system \eqref{a1}--\eqref{a3} as the following
\begin{align}\label{2.7}
F\triangleq(2\mu+\lambda)\divv u-(P-\bar{P}).
\end{align}
\begin{lemma}\label{l28}
Let $(\rho, u, d)$ be a strong solution of \eqref{a1}--\eqref{a6} in $\Omega\times(0, T]$. Then, for any $p\in [2, 6]$ and $1<r<\infty$,
there exists a positive constant $C$ depending only on $p$, $r$, $\mu$, $\lambda$, and $\Omega$ such that
\begin{align}
\|\nabla u\|_{L^r}&\le C\big(\|{\rm div}\,u\|_{L^r}+\|{\rm curl}\,u\|_{L^r}\big),\label{2.8}\\
\|\nabla F\|_{L^r}&\le C\big(\|\rho\dot{u}\|_{L^r}+\||\nabla d||\nabla^2d|\|_{L^r}\big),\label{2.11}\\[3pt]
\|F\|_{L^p}&\le C\big(\|\rho\dot{u}\|_{L^2}+\||\nabla d||\nabla^2 d|\|_{L^2}\big)^\frac{3p-6}{2p}
\big(\|\nabla u\|_{L^2}+\|P-\bar{P}\|_{L^2}\big)^\frac{6-p}{2p}\notag \\
& \quad +C\big(\|\nabla u\|_{L^2}+\|P-\bar{P}\|_{L^2}\big),\\
\|{\rm curl}\,u\|_{L^p}&\le C\big(\|\rho\dot{u}\|_{L^2}+\||\nabla d||\nabla^2d|\|_{L^2}\big)^\frac{3p-6}{2p}\|\nabla u\|_{L^2}^\frac{6-p}{2p}
+C\|\nabla u\|_{L^2},\label{2.16}
\end{align}
and
\begin{align}\label{2.13}
\|\nabla \curl u\|_{L^p}\le
C\big(\|\rho\dot{u}\|_{L^p}+\||\nabla d||\nabla^2d|\|_{L^p}+\|\nabla u\|_{L^2}\big).
\end{align}
Moreover, one also has
\begin{align}
\|F\|_{L^p}+\|\curl u\|_{L^p}&\le C\big(\|\rho\dot{u}\|_{L^2}+\||\nabla d||\nabla^2d|\|_{L^2}+\|P-\bar{P}\|_{L^2}+\|\nabla u\|_{L^2}\big),
\end{align}
and
\begin{align}
\|\nabla u\|_{L^p}&\le C\big(\|\rho\dot{u}\|_{L^2}+\||\nabla d||\nabla^2d|\|_{L^2}\big)^\frac{3p-6}{2p}\big(\|\nabla u\|_{L^2}+\|P-\bar{P}\|_{L^2}\big)^\frac{6-p}{2p}\nonumber\\
&\quad+C\big(\|\nabla u\|_{L^2}+\|P-\bar{P}\|_{L^p}\big).
\end{align}

\end{lemma}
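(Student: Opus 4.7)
The plan is to exploit the structure of the momentum equation \eqref{a2}. Rewriting the Lamé operator via the identity $\Delta u=\nabla\divv u-\nabla\times\curl u$, equation \eqref{a2} can be rearranged as
\begin{equation*}
\nabla F-\mu\,\nabla\times\curl u=\rho\dot u+\Delta d\cdot\nabla d,
\end{equation*}
so that, applying divergence and curl separately,
\begin{equation*}
\Delta F=\divv(\rho\dot u+\Delta d\cdot\nabla d),\qquad \mu\Delta(\curl u)=\curl(\rho\dot u+\Delta d\cdot\nabla d).
\end{equation*}
These two scalar/vector elliptic equations, together with appropriate boundary conditions coming from \eqref{a6}, will drive every bound in the lemma. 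The inequality \eqref{2.8} is then an immediate consequence of Lemma \ref{l24}, since $u\cdot n=0$ on $\partial\Omega$.

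For \eqref{2.11} I would treat $F$ as the solution of a Neumann-type problem. The boundary data is obtained by dotting the pointwise identity above with $n$: writing $\dot u\cdot n=u_t\cdot n+u\cdot\nabla u\cdot n$ and using $u\cdot n=0$ on $\partial\Omega$ to get $u_t\cdot n=0$, together with the identity \eqref{2.5} to convert $u\cdot\nabla u\cdot n$ into a lower-order curvature term $-u\cdot\nabla n\cdot u$, and using $\curl u\times n=0$ to control $(\nabla\times\curl u)\cdot n$ as a tangential derivative on $\partial\Omega$, one sees that $\partial_n F$ is given by data that is already controlled by $\|\rho\dot u\|_{L^r}+\||\nabla d||\nabla^2 d|\|_{L^r}$ plus trace terms handled by Lemma \ref{l23}. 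Standard $L^r$ regularity for the Neumann problem then yields \eqref{2.11}. The estimate \eqref{2.13} proceeds analogously from the elliptic equation for $\curl u$ under the boundary condition $\curl u\times n=0$ via Lemma \ref{l25}; the harmless extra $\|\nabla u\|_{L^2}$ term absorbs the $L^r$-norm of $\curl u$ that appears on the right of Lemma \ref{l25} after passing through Lemma \ref{l21}.

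The remaining interpolation estimates are then pure Gagliardo–Nirenberg. Once $\|\nabla F\|_{L^2}$ and $\|\nabla\curl u\|_{L^2}$ are bounded as above, I would apply \eqref{2.3} to $F$ and $\curl u$ (using $\|F\|_{L^2}\le C(\|\nabla u\|_{L^2}+\|P-\bar P\|_{L^2})$ and $\|\curl u\|_{L^2}\le\|\nabla u\|_{L^2}$ for the low-norm factor) to obtain the bounds on $\|F\|_{L^p}$ and \eqref{2.16}; adding them yields the combined $L^p$ bound on $F+\curl u$. The last inequality, on $\|\nabla u\|_{L^p}$, follows from Lemma \ref{l24} applied in $L^p$, together with $\divv u=(2\mu+\lambda)^{-1}(F+(P-\bar P))$.

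The main obstacle will be the second paragraph, namely, correctly deriving and controlling the boundary data for the elliptic problems satisfied by $F$ and $\curl u$ under the slip boundary condition, which is the genuine new feature compared with the whole-space analysis in \cite{LXZ18}. The coupling to the director field $d$ is comparatively mild: the term $\Delta d\cdot\nabla d$ enters only as an inhomogeneity of the elliptic equations and inherits the same $L^r$ control that $\rho\dot u$ enjoys, so the strategy essentially parallels that of \cite{CJ21} with $\rho\dot u$ replaced by $\rho\dot u+\Delta d\cdot\nabla d$ throughout.
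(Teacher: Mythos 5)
Your proposal is correct and follows essentially the same route as the paper: the paper likewise derives the Neumann problem $\Delta F=\divv\big(\rho\dot u+\divv M(d)\big)$, $\partial F/\partial n=(\rho\dot u+\divv M(d))\cdot n$ (using $\Delta d\cdot\nabla d=\divv M(d)$ and the fact that $(\nabla\times\curl u)\cdot n$ vanishes on $\partial\Omega$ when $\curl u\times n=0$), invokes the $L^r$ estimate for this Neumann problem (\cite[Lemma 4.27]{NS04}) for \eqref{2.11}, bounds $\nabla\curl u$ via Lemma \ref{l25} applied to $\curl u$ together with the first-order identity $\mu\curl\curl u=\nabla F-\rho\dot u+\divv M(d)$, and finishes with Gagliardo--Nirenberg interpolation and \eqref{2.8}. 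The only cosmetic differences are that your decomposition of $\dot u\cdot n$ via \eqref{2.5} is not needed for this lemma (the Neumann estimate accepts data of the form $g\cdot n$ directly; that decomposition is used later for the surface integrals in Section \ref{sec3}), and the paper works from the first-order identity for $\curl\curl u$ rather than the second-order equation $\mu\Delta(\curl u)=\curl(\cdots)$.
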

\begin{proof}[Proof]
1. Due to \eqref{a6}, we obtain \eqref{2.8} from Lemma \ref{l24}. Moreover,
by $\eqref{a2}$, $\eqref{a6}$, and \eqref{2.7},
one finds that $F$ satisfies
\begin{align}\label{2.26}
\begin{cases}
\Delta F=\divv(\rho\dot{u})+\divv\divv(M(d)) &{\rm in}~\Omega,\\
\frac{\partial F}{\partial n}=(\rho\dot{u}+\divv(M(d)))\cdot n &{\rm on}~\partial\Omega,
\end{cases}
\end{align}
where
\begin{align*}
M(d)\triangleq \nabla d\odot\nabla d-\frac12|\nabla d|^2\mathbb{I}_3,
\end{align*}
$\nabla d\odot\nabla d$ denotes a matrix whose $ij$th entry $(1\leq i,j\leq3)$
is $\partial_id\cdot\partial_jd$ and $\mathbb{I}_3$ is the $3\times3$ identity matrix.
Then we obtain \eqref{2.11} from \eqref{2.26} and \cite[Lemma 4.27]{NS04}.

2. By \eqref{2.7}, we rewrite \eqref{a2} as
\begin{align}\label{t2.16}
\mu\curl\curl u=\nabla F-\rho\dot{u}+\divv(M(d)),
\end{align}
Noticing that $\divv(\nabla\times\curl u)=0$, we get from \eqref{a6} and Lemma \ref{l25} that
\begin{align}
\|\nabla{\rm curl}\,u\|_{L^r}&\le C\big(\|\curl{\rm curl}\,u\|_{L^r}+\|{\rm curl}\,u\|_{L^r}\big)\nonumber\\
&\le C\big(\|\rho\dot{u}\|_{L^r}+\||\nabla d||\nabla^2d|\|_{L^r}+\|{\rm curl}\,u\|_{L^r}\big).\label{2.30}
\end{align}
One deduces from \eqref{2.3}, \eqref{2.11}, and \eqref{2.7} that, for $p\in [2, 6]$,
\begin{align}\label{z2.19}
\|F\|_{L^p}&\le C\|F\|_{L^2}^\frac{6-p}{2p}\|\nabla F\|_{L^2}^\frac{3p-6}{2p}+C\|F\|_{L^2}\nonumber\\
&\le C(\|\rho\dot{u}\|_{L^2}+\||\nabla d||\nabla^2d|\|_{L^2})^\frac{3p-6}{2p}
(\|\nabla u\|_{L^2}+\|P-\bar{P}\|_{L^2})^\frac{6-p}{2p}+C(\|\nabla u\|_{L^2}+\|P-\bar{P}\|_{L^2}).
\end{align}
By \eqref{2.3} and \eqref{2.30}, we arrive at
\begin{align}\label{z2.20}
\|{\rm curl}\,u\|_{L^p}&\le C\|{\rm curl}\,u\|_{L^2}^\frac{6-p}{2p}\|\nabla{\rm curl}\,u\|_{L^2}^\frac{3p-6}{2p}
+C\|{\rm curl}\,u\|_{L^2}\nonumber\\
&\le C\big(\|\rho\dot{u}\|_{L^2}+\||\nabla d||\nabla^2d|\|_{L^2}+\|\nabla u\|_{L^2}\big)^\frac{3p-6}{2p}\|\nabla u\|_{L^2}^\frac{6-p}{2p}
+C\|\nabla u\|_{L^2}\nonumber\\
&\le C\big(\|\rho\dot{u}\|_{L^2}+\||\nabla d||\nabla^2d|\|_{L^2}\big)^\frac{3p-6}{2p}\|\nabla u\|_{L^2}^\frac{6-p}{2p}
+C\|\nabla u\|_{L^2},
\end{align}
which together with \eqref{z2.19} leads to
\begin{align}
\|F\|_{L^p}+\|\curl u\|_{L^p}&\le C\big(\|\rho\dot{u}\|_{L^2}+\||\nabla d||\nabla^2d|\|_{L^2}+\|P-\bar{P}\|_{L^2}+\|\nabla u\|_{L^2}\big).
\end{align}

3. By \eqref{2.30}, \eqref{z2.20}, Young's inequality, and H\"older's inequality, we derive that, for $p\in [2, 6]$,
\begin{align}\label{2.32}
\|\nabla{\rm curl}\,u\|_{L^p}&\le C\big(\|\rho\dot{u}\|_{L^p}+\||\nabla d||\nabla^2d|\|_{L^p}+\|{\rm curl}\,u\|_{L^p}\big)\nonumber\\
&\le C\big(\|\rho\dot{u}\|_{L^p}+\||\nabla d||\nabla^2d|\|_{L^p}+\|\rho\dot{u}\|_{L^2}
+\||\nabla d||\nabla^2d|\|_{L^2}+\|\curl u\|_{L^2}\big)\nonumber\\
&\le C\big(\|\rho\dot{u}\|_{L^p}+\||\nabla d||\nabla^2d|\|_{L^p}+\|\nabla u\|_{L^2}\big),
\end{align}
which implies \eqref{2.13}.
Moreover, we infer from \eqref{2.7}, \eqref{2.8}, \eqref{z2.19}, and \eqref{z2.20} that
\begin{align*}
\|\nabla u\|_{L^p}&\le C\big(\|{\rm div}\,u\|_{L^p}+\|{\rm curl}\,u\|_{L^p}\big)\nonumber\\
&\le C\big(\|F\|_{L^p}+\|P-\bar{P}\|_{L^p}+\|\curl u\|_{L^p}\big)\nonumber\\
&\le C\big(\|\rho\dot{u}\|_{L^2}+\||\nabla d||\nabla^2d|\|_{L^2}\big)^\frac{3p-6}{2p}\big(\|\nabla u\|_{L^2}+\|P-\bar{P}\|_{L^2}\big)^\frac{6-p}{2p}\nonumber\\
&\quad+C\big(\|\nabla u\|_{L^2}+\|P-\bar{P}\|_{L^p}\big).
\end{align*}
This completes the proof.
\end{proof}

The following Beale-Kato-Majda type inequality (see \cite[Lemma 2.7]{CJ21}) will be used to estimate $\|\nabla u\|_{L^\infty}$.
\begin{lemma}\label{l211}
Let $\Omega$ be a bounded simply connected domain in $\mathbb{R}^3$ with smooth boundary. Assume that $v\in W^{2, q}(\Omega)\ (3<q<\infty)$ satisfying $v\cdot n=0$
and $\curl v\times n=0$ on $\partial\Omega$, then there exists a constant $C=C(q, \Omega)$ such that
\begin{align*}
\|\nabla v\|_{L^\infty}
\le C\big(\|\divv v\|_{L^\infty}+\|\curl v\|_{L^\infty}\big)\ln\big(e+\|\nabla^2 v\|_{L^q}\big)+C\|\nabla v\|_{L^2}+C.
\end{align*}
\end{lemma}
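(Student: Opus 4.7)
The plan is to derive the Beale-Kato-Majda type inequality from the div-curl elliptic theory of Lemma \ref{l24}, by interpolating $\nabla v$ between a large-exponent Lebesgue norm $L^s$ and the Sobolev norm controlled by $\|\nabla^2 v\|_{L^q}$, and then choosing $s$ to depend logarithmically on $\|\nabla^2 v\|_{L^q}$. This is the standard scheme for a BKM-type inequality, adapted to the Navier-slip boundary condition.

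First, since $v\cdot n=0$ on $\partial\Omega$, Lemma \ref{l24} gives, for each $s\in(1,\infty)$,
\begin{equation*}
\|\nabla v\|_{L^s}\le C(s)\bigl(\|\divv v\|_{L^s}+\|\curl v\|_{L^s}\bigr),
\end{equation*}
and the Calder\'on--Zygmund nature of the underlying div-curl reconstruction furnishes the linear growth $C(s)\le Cs$ as $s\to\infty$. Interpolating the right-hand side between $L^\infty$ and $L^2$, with $B\triangleq\|\divv v\|_{L^\infty}+\|\curl v\|_{L^\infty}$, I would obtain
\begin{equation*}
\|\nabla v\|_{L^s}\le C\,s\,B^{\,1-2/s}\|\nabla v\|_{L^2}^{2/s}.
\end{equation*}

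Second, the Gagliardo-Nirenberg inequality of Lemma \ref{l21}, applied to $g=\nabla v$ with Sobolev exponent $q>3$, yields for every $s>1$
\begin{equation*}
\|\nabla v\|_{L^\infty}\le C\|\nabla v\|_{L^s}^{\alpha_s}\|\nabla^2 v\|_{L^q}^{1-\alpha_s}+C\|\nabla v\|_{L^2},
\end{equation*}
with $\alpha_s=\tfrac{s(q-3)}{3q+s(q-3)}$ so that $1-\alpha_s=O(1/s)$ as $s\to\infty$. Substituting the bound from the first step and then choosing $s=\log(e+\|\nabla^2 v\|_{L^q})$ keeps $\|\nabla^2 v\|_{L^q}^{1-\alpha_s}$ bounded (since $(1-\alpha_s)\log(e+\|\nabla^2 v\|_{L^q})$ stays uniformly bounded) while turning the factor $(sB)^{\alpha_s}$ into $CB\log(e+\|\nabla^2 v\|_{L^q})$. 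The residual factor $\|\nabla v\|_{L^2}^{2\alpha_s/s}$ tends to $1$ and can be absorbed into the additive $C\|\nabla v\|_{L^2}+C$ term. Collecting everything yields the stated inequality.

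The main obstacle I foresee is tracking the explicit $s$-dependence of the constant $C(s)$ in Lemma \ref{l24}, since its stated form does not record this dependence, whereas the logarithmic optimization above demands $C(s)=O(s)$. To secure this, I would revisit the proof of Lemma \ref{l24} and verify, via a partition of unity and boundary straightening together with the identity \eqref{2.5}, that the operators reconstructing $\nabla v$ from $(\divv v,\curl v)$ under the slip boundary condition reduce to standard Calder\'on-Zygmund operators on $\Omega$, whose $L^s\to L^s$ norms grow at worst linearly in $s$. Once that uniform growth is in hand, the rest of the argument is routine interpolation.
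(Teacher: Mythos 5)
The paper does not prove this lemma at all: it is quoted verbatim from Cai--Li \cite[Lemma 2.7]{CJ21}, so there is no internal argument to compare against. Your interpolation route (div-curl estimate with an $s$-explicit constant, plus Gagliardo--Nirenberg, plus a logarithmic choice of $s$) is a standard and legitimate way to obtain BKM-type inequalities, and it does lead to the stated bound. Two places in your write-up need tightening, though.

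First, as you yourself flag, the whole argument hinges on $C(s)\le Cs$ in the estimate $\|\nabla v\|_{L^s}\le C(s)(\|\divv v\|_{L^s}+\|\curl v\|_{L^s})$; Lemma \ref{l24} as stated gives no growth rate, and establishing the linear growth (by localization, boundary flattening, and the Calder\'on--Zygmund bound $O(s)$ for the double Riesz transforms that reconstruct $\nabla v$ from $\divv v$ and $\curl v$) is really the entire content of the lemma, not a side remark. One should also check that the Gagliardo--Nirenberg constant $C_1$ in Lemma \ref{l21} stays bounded as the interpolation exponent $s\to\infty$, since that constant is likewise allowed to depend on the exponents. Second, the claim that $\|\nabla v\|_{L^2}^{2\alpha_s/s}$ ``tends to $1$ and can be absorbed'' is not correct as stated: $\|\nabla v\|_{L^2}$ is not controlled by $\|\nabla^2 v\|_{L^q}$ (linear vector fields have $\nabla^2v=0$ but arbitrarily large $\|\nabla v\|_{L^2}$), so with $s=\log(e+\|\nabla^2 v\|_{L^q})$ this factor can be large. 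The fix is elementary but must be made explicit: after bounding $\|\nabla^2 v\|_{L^q}^{1-\alpha_s}\le C(q)$ and using $t^{\alpha_s}\le 1+t$, apply Young's inequality with exponents $(1-2/s)^{-1}$ and $s/2$ to get $sB^{1-2/s}\|\nabla v\|_{L^2}^{2/s}\le sB+2\|\nabla v\|_{L^2}$, which (together with $\ln(e+\|\nabla^2 v\|_{L^q})\ge 1$, so the stray $CB$ term is absorbed into the logarithmic one) yields exactly the asserted inequality. With these two points repaired, your proof is complete and self-contained, which is arguably more than the paper offers.
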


Finally, the following Zlotnik inequality (see \cite[Lemma 1.3]{Z00}) will be used to get the uniform-in-time upper bound of the density.
\begin{lemma}\label{l210}
Suppose the function $y$ satisfy
\begin{align*}
y'(t)=g(y)+b'(t)~on~[0, T], \quad y(0)=y^0,
\end{align*}
with $g\in C(R)$ and $y, b\in W^{1, 1}(0, T)$. If $g(\infty)=-\infty$ and
\begin{align*}
b(t_2)-b(t_1)\le N_0+N_1(t_2-t_1)
\end{align*}
for all $0\le t<t_2\le T$ with some $N_0\ge 0$ and $N_1\ge 0$, then
\begin{align*}
y(t)\le \max\{y^0, \xi_0\}+N_0<\infty~on~[0, T],
\end{align*}
where $\xi_0$ is a constant such that
\begin{align*}
g(\xi)\le -N_1, \quad for\quad \xi\ge \xi_0.
\end{align*}
\end{lemma}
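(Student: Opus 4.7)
The plan is to establish the bound by a direct comparison/contradiction argument based on the ODE. First, since $g$ is continuous and $g(\infty)=-\infty$, I can (and the statement already assumes) fix $\xi_0\in\mathbb{R}$ with $g(\xi)\le -N_1$ for all $\xi\ge\xi_0$. Set $M\triangleq \max\{y^0,\xi_0\}+N_0$. Because $y\in W^{1,1}(0,T)$, it has an absolutely continuous representative, so $y$ is continuous on $[0,T]$ and the ODE $y'=g(y)+b'$ holds a.e.\ in $t$; integrating on any subinterval $[s,\tau]\subset[0,T]$ gives
\begin{align*}
y(\tau)-y(s)=\int_s^\tau g(y(\sigma))\,d\sigma+b(\tau)-b(s).
\end{align*}

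Suppose, for contradiction, that there exists $\tau\in(0,T]$ with $y(\tau)>M\ge\xi_0$. I split into two cases based on whether the trajectory ever drops to level $\xi_0$ before $\tau$. If the set $S\triangleq\{t\in[0,\tau]:y(t)\le\xi_0\}$ is nonempty, let $t_1=\sup S$; by continuity $y(t_1)\le\xi_0$ and $y(t)>\xi_0$ for all $t\in(t_1,\tau]$, hence $g(y(t))\le -N_1$ on that interval. Using the integrated ODE on $[t_1,\tau]$ together with the sublinear control $b(\tau)-b(t_1)\le N_0+N_1(\tau-t_1)$ hypothesised on $b$, I obtain
\begin{align*}
y(\tau)\le y(t_1)+\int_{t_1}^\tau g(y(\sigma))\,d\sigma+b(\tau)-b(t_1)\le \xi_0 -N_1(\tau-t_1)+N_0+N_1(\tau-t_1)=\xi_0+N_0\le M,
\end{align*}
contradicting $y(\tau)>M$. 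If instead $S=\emptyset$, then $y(t)>\xi_0$ on all of $[0,\tau]$ (in particular $y^0>\xi_0$, so $\max\{y^0,\xi_0\}=y^0$); the same integration from $0$ to $\tau$ with $g(y)\le -N_1$ yields $y(\tau)\le y^0+N_0=M$, again a contradiction. Hence $y(t)\le M$ for every $t\in[0,T]$, which is the asserted bound.

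The only subtle point is the case split at $t_1$: one must be sure that $y$ exceeds $\xi_0$ throughout $(t_1,\tau]$ so that $g(y)\le -N_1$ is valid there, and that $y(t_1)\le\xi_0$ so the starting value is under control. Both facts follow from continuity of $y$ and the definition of $t_1$ as the supremum of $S$. Once that structural observation is in place, the cancellation between $-N_1(\tau-t_1)$ coming from $g(y)$ and $+N_1(\tau-t_1)$ coming from the hypothesis on $b$ does all the work, so no further refinement of the estimates is needed.
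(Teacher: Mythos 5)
Your proof is correct. Note, however, that the paper does not actually prove Lemma~\ref{l210}; it cites it as \cite[Lemma~1.3]{Z00} (Zlotnik), so there is no in-text argument to compare against. Your self-contained argument -- pick a hypothetical time $\tau$ at which $y$ exceeds $M=\max\{y^0,\xi_0\}+N_0$, locate $t_1=\sup\{t\le\tau: y(t)\le\xi_0\}$ (or use $t_1=0$ if that set is empty), integrate the absolutely continuous identity $y(\tau)-y(t_1)=\int_{t_1}^{\tau}g(y)\,d\sigma+b(\tau)-b(t_1)$, and let the $-N_1(\tau-t_1)$ contribution from $g(y)\le -N_1$ on $(t_1,\tau]$ cancel the $+N_1(\tau-t_1)$ allowed by the growth bound on $b$ -- is the standard and essentially the same approach as Zlotnik's; the key structural observations (closedness of $\{y\le\xi_0\}$ so that $y(t_1)\le\xi_0$, strict inequality $y>\xi_0$ on $(t_1,\tau]$ so that $g(y)\le -N_1$ applies, and $t_1<\tau$ because $y(\tau)>M\ge\xi_0$) are all in place. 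One cosmetic remark: in the two-case split the first inequality sign in your chain
\[
y(\tau)\le y(t_1)+\int_{t_1}^{\tau}g(y)\,d\sigma+b(\tau)-b(t_1)
\]
is actually an equality; the estimates that follow are where the inequalities enter. This does not affect correctness.
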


\section{\textit{A priori} estimates}\label{sec3}
In this section, we will establish some necessary \textit{a priori} bounds for strong solutions to the problem \eqref{a1}--\eqref{a6} in order to extend the local strong solutions guaranteed by Lemma \ref{l22}. Let $T>0$ be a fixed time and $(\rho, u, d)$ be a strong solution to \eqref{a1}--\eqref{a6} in $\Omega\times (0, T]$ with initial data $(\rho_0, u_0, d_0)$ satisfying \eqref{a10}.

\subsection{Lower-order estimates}\label{sec3.1}
Throughout this subsection, we will use $C$ or $C_i\ (i=1, 2, \cdots)$
to denote the generic positive constants, which may depend on $\mu$, $\lambda$, $\gamma$, $a$, $\hat{\rho}$, $\Omega$, $M_1$, $M_2$, and $\bar{\rho}$. In particular, they are independent of $T$. Sometimes we use $C(\alpha)$ to emphasize the dependence of $C$ on $\alpha$.

Set $\sigma=\sigma(t)\triangleq\min\{1, t\}$, we define
\begin{align}\label{3.5}
A_1(T)\triangleq\sup_{0\le t\le T}\big[\sigma(t)\big(\|\nabla u\|_{L^2}^2+\|\Delta d\|_{L^2}^2\big)\big],
\quad A_2(T)\triangleq\sup_{0\le t\le T}\big(\|\nabla u\|_{L^2}^2+\|\Delta d\|_{L^2}^2\big).
\end{align}
The main aim of this subsection is to obtain the following key \textit{a priori} estimates, which give the uniform upper bound of the density.
\begin{proposition}\label{p31}
Under the conditions of Theorem \ref{thm1}, there exist positive constants
$\varepsilon$ and $K$ both depending on $\mu$, $\lambda$,
$\gamma$, $a$, $\bar{\rho}$, $\hat{\rho}$, $\Omega$, $M_1$, and $M_2$ such that if $(\rho, u, d)$ is a strong solution of \eqref{a1}--\eqref{a6} in
$\Omega\times (0, T]$ satisfying
\begin{align}\label{3.6}
\sup_{\Omega\times[0, T]}\rho\le 2\hat{\rho}, \quad A_1(T)\le 2E_0^\frac12, \quad A_2(\sigma(T))\le 4K,
\end{align}
then the following estimates hold
\begin{align}\label{3.7}
\sup_{\Omega\times[0, T]}\rho\le \frac74\hat{\rho}, \quad A_1(T)\le E_0^\frac12, \quad A_2(\sigma(T))\le 3K,
\end{align}
provided that $E_0\le\varepsilon$.
\end{proposition}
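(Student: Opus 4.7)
I would prove Proposition~\ref{p31} by a bootstrap argument that uses \eqref{3.6} as working bounds and closes the three conclusions of \eqref{3.7} one at a time, reserving the freedom to shrink $\varepsilon$ at each step. The three targeted estimates would come from (i) a first-order energy for $\|\nabla u\|_{L^2}^2+\|\Delta d\|_{L^2}^2$ (run twice, without weight on $[0,\sigma(T)]$ and with weight $\sigma$ on $[0,T]$), (ii) a weighted second-order energy for $\|\sqrt\rho\dot u\|_{L^2}^2+\|\nabla d_t\|_{L^2}^2$ that controls the right-hand side of the Zlotnik step, and (iii) an application of Zlotnik's Lemma~\ref{l210} to the transport equation for $\log\rho$ along particle paths. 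The preliminary ingredient is the basic energy identity: testing \eqref{a2} by $u$, combining with the pressure work from \eqref{a1} and \eqref{1.7}, and testing \eqref{a3} by $-\Delta d-|\nabla d|^2 d$ (using $|d|=1$ to neutralize the supercritical term and the boundary data \eqref{a6} to eliminate surface contributions) yields
\begin{align*}
\sup_{0\le t\le T}\int\bigl(\rho|u|^2+|\nabla d|^2+G(\rho)\bigr)dx+\int_0^T\int\bigl(|\nabla u|^2+\bigl|\Delta d+|\nabla d|^2d\bigr|^2\bigr)dxdt\le CE_0.
\end{align*}

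\textbf{Closing $A_2(\sigma(T))\le 3K$ and $A_1(T)\le E_0^{1/2}$.} To bound $\|\nabla u\|_{L^2}^2+\|\Delta d\|_{L^2}^2$, I would test \eqref{a2} with $u_t$ and simultaneously test the equation for $\Delta d$ obtained from \eqref{a3} with $-\Delta d_t$, producing $\partial_t\|\nabla u\|_{L^2}^2$ and $\partial_t\|\Delta d\|_{L^2}^2$ on the left. The nonlinearities $\rho u\cdot\nabla u$, $\Delta d\cdot\nabla d$, $|\nabla d|^2 d$, and $u\cdot\nabla d$ would be absorbed using \eqref{2.3}, the effective-viscous-flux $L^p$-bounds of Lemma~\ref{l28}, and the working bounds \eqref{3.6}; the boundary integral coming from $u_t\cdot n\not\equiv0$ on $\partial\Omega$ would be converted to a curvature integral via \eqref{2.5} and controlled by Lemma~\ref{l23}. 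Running the resulting differential inequality without weight on $[0,\sigma(T)]$ and using $\|\nabla u_0\|_{L^2}^2\le M_1$, $\|\Delta d_0\|_{L^2}^2\le M_2$ would give $A_2(\sigma(T))\le 3K$ once $K$ is fixed large enough depending on $M_1, M_2$. Multiplying the same inequality by $\sigma(t)$, integrating, and using the basic energy bound to handle the $\sigma'$-term (supported in $[0,1]$, where $\int_0^1(\|\nabla u\|_{L^2}^2+\|\Delta d\|_{L^2}^2)dt\le CE_0$) would upgrade $A_1(T)\le 2E_0^{1/2}$ to $A_1(T)\le E_0^{1/2}$ for $\varepsilon$ small. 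This step also produces the time-weighted $L^1_tL^2_x$ control of $\sqrt\rho\dot u$ and $\nabla d_t$ that is essential for the density step.

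\textbf{Density bound via Zlotnik.} Along the flow $X(t;x)$ of $u$, equation \eqref{a1} reads $(2\mu+\lambda)D_t\log\rho+(P(\rho)-\bar P)=-F$. Setting $y(t)=\log\rho(X(t;x),t)$ and $b(t)=-(2\mu+\lambda)^{-1}\int_0^t F(X(s;x),s)\,ds$, I would apply Lemma~\ref{l210} with $g(\xi)=-(2\mu+\lambda)^{-1}(P(e^\xi)-\bar P)$, which satisfies $g(\infty)=-\infty$. The increment condition $b(t_2)-b(t_1)\le N_0+N_1(t_2-t_1)$ would be verified by splitting: on a short layer near $t=0$, the weighted second-order bound just obtained, combined with \eqref{2.13} and Lemma~\ref{l211}, controls $\|F\|_{L^\infty}$ in an $L^1_t$-integrable fashion, contributing to $N_0$; on the complement, the smallness $A_1(T)\le E_0^{1/2}$ yields $\|F\|_{L^\infty}=O(E_0^{\alpha})$ for some $\alpha>0$, making $N_1$ small enough that $\xi_0\le\log(\tfrac{7\hat\rho}{4})$, which gives the improved bound $\rho\le\tfrac74\hat\rho$.

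\textbf{Main obstacle.} The principal technical difficulty is the simultaneous presence of the slip boundary condition \eqref{a6} and the two critical nonlinear couplings $\Delta d\cdot\nabla d$ in \eqref{a2} and $|\nabla d|^2 d$ in \eqref{a3}. Every time-differentiated test function (whether $u_t$, $\dot u$, or $\Delta d_t$) generates a boundary integral that does not vanish tangentially, and the coupling terms force sharp control of $\||\nabla d||\nabla^2 d|\|_{L^2}$ and $\|\nabla d\|_{L^\infty}$ that must remain strictly subordinate to the dissipation throughout the bootstrap. Keeping all of these subordinate requires using the identity \eqref{2.5}, the trace inequality Lemma~\ref{l23}, and the $F$-based $L^p$ apparatus of Lemma~\ref{l28} in a coordinated way; the time-weighted scheme is exactly what allows one to avoid imposing the compatibility condition \eqref{1.8} on the initial data, at the cost of losing uniform control on $\|\nabla u\|_{L^2}^2$ at $t=0^+$ and thereby forcing the splitting $A_1$ versus $A_2$.
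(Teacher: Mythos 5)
Your overall architecture (basic energy identity, first-order weighted energy for $\|\nabla u\|_{L^2}^2+\|\Delta d\|_{L^2}^2$, second-order weighted energy for $\|\sqrt\rho\dot u\|_{L^2}^2+\|\nabla d_t\|_{L^2}^2$, then Zlotnik for the density) matches the paper's. But there is one genuine gap in the step that closes $A_1(T)\le E_0^{1/2}$. You propose to multiply the first-order differential inequality by $\sigma(t)$ and integrate over all of $[0,T]$, claiming the only delicate contribution is the $\sigma'$-term supported in $[0,1]$. That is not the problematic term. The right-hand side of the relevant inequality (the analogue of \eqref{t3.20}) necessarily carries terms of the form $C\eta(t)\|\nabla d\|_{H^1}^2$ coming from the couplings $\Delta d\cdot\nabla d$ and $|\nabla d|^2d$, and $\|\nabla^2 d\|_{L^2}^2$ is \emph{not} globally small in $L^1(0,T)$: the dissipation estimate \eqref{3.9} only gives $\int_{i-1}^{i+1}\|\nabla^2 d\|_{L^2}^2\,dt\le CE_0^{1/2}$ on each unit interval, so $\int_0^T\sigma\|\nabla d\|_{H^1}^2\,dt$ grows like $E_0^{1/2}T$. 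A global-in-time integration therefore cannot yield a bound $A_1(T)\le E_0^{1/2}$ uniform in $T$ (this is also why, unlike Cai--Li, no exponential decay is available here). The paper resolves this with the piecewise-estimate method of Yu--Zhao: one takes the localized weight $\sigma_i(t)=\sigma(t+1-i)$, integrates over $(i-1,i+1]$ for each integer $i$, and uses \eqref{3.9} on that single interval to get $\sup_{i\le t\le i+1}\sigma_i(\|\nabla u\|_{L^2}^2+\|\Delta d\|_{L^2}^2)\le C_3E_0$ with $C_3$ independent of $i$ (see \eqref{3.74}); taking the supremum over $i$ then gives $A_1(T)\le E_0^{1/2}$. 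Without this localization your bootstrap does not close for large $T$.

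Two smaller points. First, your remark that a boundary integral arises because ``$u_t\cdot n\not\equiv 0$'' is incorrect: since $u\cdot n=0$ on $\partial\Omega$ for all $t$, also $u_t\cdot n=0$ there; the nonvanishing normal trace belongs to the material derivative, $\dot u\cdot n=u\cdot\nabla u\cdot n=-u\cdot\nabla n\cdot u$, which is exactly why the paper tests with $\eta(t)\dot u$ and invokes \eqref{2.5} together with Lemma \ref{l23} (see \eqref{z3.21}, \eqref{3.24}, \eqref{z3.22}). Second, in the Zlotnik step on the large-time region the paper does not make $N_1$ small; it takes $N_1=\frac{a}{2\mu+\lambda}$ and chooses $\xi_0=\frac{3\hat\rho}{2}$, the essential input being the linear-growth estimate \eqref{3.76}, $\int_{t_1}^{t_2}\sigma^2\|\nabla\dot u\|_{L^2}^2\,dt\le CE_0^{1/2}+CE_0(t_2-t_1)$, whose proof again relies on the piecewise second-order estimates. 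You gesture at the right quantities but would need to establish this specific structure for the increment bound $b(t_2)-b(t_1)\le N_0+N_1(t_2-t_1)$ to hold with $N_0=O(E_0^{1/6})$.
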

\begin{remark}
Recalling the definition of $\sigma(t)$, we then obtain from \eqref{3.6} that
\begin{align}\label{t3.4}
\sup_{0\le t\le T}\big(\|\nabla u\|_{L^2}^2+\|\Delta d\|_{L^2}^2\big)\le C.
\end{align}
\end{remark}

Before proving Proposition \ref{p31}, we show some necessary \textit{a priori} estimates, see Lemmas \ref{l32}--\ref{l38} below.

\begin{lemma}\label{l32}
Let $(\rho, u, d)$ be a strong solution of \eqref{a1}--\eqref{a6} in $\Omega\times [0, T]$, then it holds that
\begin{align}
&\sup_{0\le t\le T}\Big(\frac12\|\sqrt{\rho}u\|_{L^2}^2+\frac12\|\nabla d\|_{L^2}^2+\|G(\rho)\|_{L^1}\Big)\nonumber\\
&\quad+\int_0^T\big[\mu\|\nabla u\|_{L^2}^2+(\mu+\lambda)\|\divv u\|_{L^2}^2+\|\Delta d+|\nabla d|^2d\|_{L^2}^2\big]dt\le E_0,\label{3.8}\\
&\sup_{0\le t\le T}\big(\|d_t\|_{L^2}^2+\|\nabla^2d\|_{L^2}^2\big)\le C.\label{t3.6}
\end{align}
Moreover, for any integer $1\le i\le [T]-1$, one has
\begin{align}\label{3.9}
\sup_{0\le t\le T}\|\rho-\bar{\rho}\|_{L^2}^2
+\int_{i-1}^{i+1}\big(\|d_t\|_{L^2}^2+\|\nabla^2d\|_{L^2}^2\big)dt\le CE_0^{\frac12},
\end{align}
provided that $E_0\le 1$.
\end{lemma}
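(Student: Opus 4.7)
The plan is to test the momentum and director equations against carefully chosen multipliers, exploit the cancellation of the coupling $\Delta d\cdot\nabla d$, and then extract the secondary estimates by Gagliardo-Nirenberg interpolation combined with the uniform bounds already available from \eqref{3.6}.

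For \eqref{3.8}, I would first test \eqref{a2} with $u$. Using \eqref{a1}, the inertial terms produce $\frac{d}{dt}\frac{1}{2}\|\sqrt{\rho}u\|_{L^2}^2$. For the Lam\'e term, integration by parts together with $u\cdot n=0$ and $\curl u\times n=0$ yields the natural dissipation $(2\mu+\lambda)\|\divv u\|_{L^2}^2+\mu\|\curl u\|_{L^2}^2$, which is equivalent (up to a constant depending on $\mu,\lambda,\Omega$) to $\mu\|\nabla u\|_{L^2}^2+(\mu+\lambda)\|\divv u\|_{L^2}^2$ by Lemma \ref{l24}. The pressure term is converted to $\frac{d}{dt}\|G(\rho)\|_{L^1}$ via the identity $\rho G'(\rho)-G(\rho)=P(\rho)-\bar{P}$ (a direct computation from \eqref{1.7}), which combined with \eqref{a1} yields the transport equation $(G(\rho))_t+\divv(G(\rho)u)+(P-\bar{P})\divv u=0$; integrating and using $\int\divv u\,dx=0$ produces the pressure contribution. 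Next, I would test \eqref{a3} with $-(\Delta d+|\nabla d|^2 d)$. Since $|d|=1$ forces $d\cdot d_t=0$ and $d\cdot\nabla d=0$, all cross-terms involving $|\nabla d|^2 d$ on the left-hand side disappear, and integration by parts using $\partial_n d=0$ gives $\frac{d}{dt}\frac{1}{2}\|\nabla d\|_{L^2}^2+\|\Delta d+|\nabla d|^2 d\|_{L^2}^2=\int u\cdot\nabla d\cdot\Delta d\,dx$. This right-hand side is exactly the coupling $-\int\Delta d\cdot\nabla d\cdot u\,dx$ from the velocity equation (after an integration by parts using $\partial_n d=0$), so summing the two identities eliminates it, and integrating in time yields \eqref{3.8}.

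For \eqref{t3.6}, the assumption \eqref{3.6} together with the definition of $\sigma$ gives $\|\nabla u\|_{L^2}^2+\|\Delta d\|_{L^2}^2\le C$ on $[0,T]$, as recorded in \eqref{t3.4}. Standard elliptic regularity for the Neumann problem applied componentwise to $d$ then yields $\|\nabla^2 d\|_{L^2}\le C(\|\Delta d\|_{L^2}+\|\nabla d\|_{L^2})\le C$. For $d_t$, I would rewrite \eqref{a3} as $d_t=\Delta d+|\nabla d|^2 d-u\cdot\nabla d$ and bound each summand using Lemma \ref{l21} and the embedding $H^1\hookrightarrow L^6$: $\||\nabla d|^2\|_{L^2}=\|\nabla d\|_{L^4}^2$ and $\|u\cdot\nabla d\|_{L^2}\le\|u\|_{L^6}\|\nabla d\|_{L^3}$ are both controlled by the already-obtained $H^1$ bound on $u$ and $H^2$ bound on $d$.

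For \eqref{3.9}, the density estimate uses the quadratic behavior of $G$: since $G(\bar{\rho})=G'(\bar{\rho})=0$, $G''(\bar{\rho})=P'(\bar{\rho})/\bar{\rho}>0$, and $\rho\in[0,2\hat{\rho}]$ by \eqref{3.6}, one has $c_1(\rho-\bar{\rho})^2\le G(\rho)\le c_2(\rho-\bar{\rho})^2$, whence $\|\rho-\bar{\rho}\|_{L^2}^2\le C\|G(\rho)\|_{L^1}\le CE_0\le CE_0^{1/2}$ for $E_0\le 1$. For the director integral, the identity $d\cdot\Delta d=-|\nabla d|^2$ (from differentiating $|d|^2=1$) gives $\|\Delta d+|\nabla d|^2 d\|_{L^2}^2=\|\Delta d\|_{L^2}^2-\|\nabla d\|_{L^4}^4$, so $\|\Delta d\|_{L^2}^2\le\|\Delta d+|\nabla d|^2 d\|_{L^2}^2+\|\nabla d\|_{L^4}^4$. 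Integrating on $[i-1,i+1]$, the first piece is controlled by \eqref{3.8}. For the second, Gagliardo-Nirenberg and $\|\nabla d\|_{L^2}^2\le 2E_0$ (from \eqref{3.8}) together with the already-established $\sup_t\|\nabla^2 d\|_{L^2}\le C$ give $\|\nabla d\|_{L^4}^4\le C\|\nabla d\|_{L^2}(\|\nabla d\|_{L^2}+\|\nabla^2 d\|_{L^2})^3\le CE_0^{1/2}$ pointwise in $t$, and integrating over an interval of length two preserves the $CE_0^{1/2}$ bound. The $d_t$-integral is handled the same way, using $\int_0^T\|\nabla u\|_{L^2}^2\,dt\le CE_0$ from \eqref{3.8}. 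The main obstacles I anticipate are the boundary integrals produced by the Lam\'e operator under the slip condition (resolved by $u\cdot n=0$, $\curl u\times n=0$, and Lemma \ref{l24}) and sharpening the $\Delta d$ and $d_t$ interval bounds to carry the factor $E_0^{1/2}$ rather than $O(1)$; the latter crucially uses the smallness of $\|\nabla d\|_{L^2}^2$ inherited from \eqref{3.8} together with the uniform $H^2$-bound on $d$ to absorb the supercritical nonlinearity $|\nabla d|^2 d$.
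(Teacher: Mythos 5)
Your proposal is correct and, for \eqref{3.8} and \eqref{t3.6}, essentially coincides with the paper's proof: the paper likewise tests the momentum equation (rewritten via $-\Delta u=-\nabla\divv u+\curl\curl u$) with $u$ and the continuity equation with $G'(\rho)$, tests \eqref{a3} with $\Delta d+|\nabla d|^2d$, and cancels $\int u\cdot\nabla d\cdot\Delta d\,dx$; for the $H^2$-bound on $d$ the paper uses an explicit integration-by-parts identity $\|\Delta d\|_{L^2}^2\ge\frac12\|\nabla^2d\|_{L^2}^2-C\|\nabla d\|_{L^2}^2$ (handling the boundary term $\int_{\partial\Omega}\partial_id\,\partial_in^j\,\partial_jd\,dS$ by the trace theorem) where you invoke Neumann elliptic regularity, which is an equivalent and equally legitimate route. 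The genuine divergence is in \eqref{3.9}: the paper derives the differential inequality $\frac{d}{dt}\|\nabla d\|_{L^2}^2+\|d_t\|_{L^2}^2+\|\Delta d\|_{L^2}^2=\|u\cdot\nabla d-|\nabla d|^2d\|_{L^2}^2\le CE_0^{1/2}$ and integrates it, multiplying by the weight $\sigma_i$ on the interior intervals $(i-1,i+1]$ and treating $[0,\sigma(T)]$ separately; you instead exploit the pointwise identity $\|\Delta d+|\nabla d|^2d\|_{L^2}^2=\|\Delta d\|_{L^2}^2-\|\nabla d\|_{L^4}^4$ so that the already-integrated dissipation in \eqref{3.8} plus the pointwise bound $\|\nabla d\|_{L^4}^4\le C\|\nabla d\|_{L^2}\|\nabla^2d\|_{L^2}^3\le CE_0^{1/2}$ does the job directly. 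Both arguments rest on the same interpolation and on the smallness of $\|\nabla d\|_{L^2}^2$ combined with the uniform $H^2$-bound; yours avoids the $\sigma_i$-weight bookkeeping and is slightly cleaner, while the paper's ODE form is reused later (cf. \eqref{t3.16}--\eqref{t3.18}). Two small points: (i) your conversion from $\int_{i-1}^{i+1}\|\Delta d\|_{L^2}^2dt$ to $\int_{i-1}^{i+1}\|\nabla^2d\|_{L^2}^2dt$ should be stated (it follows from the same elliptic estimate plus $\int\|\nabla d\|_{L^2}^2dt\le CE_0$); (ii) your "equivalent up to a constant" passage from $(2\mu+\lambda)\|\divv u\|_{L^2}^2+\mu\|\curl u\|_{L^2}^2$ to $\mu\|\nabla u\|_{L^2}^2+(\mu+\lambda)\|\divv u\|_{L^2}^2$ would literally only give $\le CE_0$ rather than $\le E_0$, but the paper's own proof makes the same silent identification and all downstream uses of the dissipation tolerate the constant.
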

\begin{proof}[Proof]
1. Due to
\begin{align}\label{t3.8}
-\Delta u=-\nabla{\rm div}\,u+\curl\curl u,
\end{align}
we rewrite $\eqref{a2}$ as
\begin{align}\label{3.10}
\rho u_t+\rho u\cdot\nabla u-(2\mu+\lambda)\nabla{\rm div}\,u+\mu\curl\curl u
+\nabla(P-\bar{P})+\Delta d\cdot\nabla d=0.
\end{align}
Multiplying \eqref{3.10} by $u$ and \eqref{a1} by $G'(\rho)$, respectively, summing up, and integrating the resulting equality over $\Omega$, we get that
\begin{align}\label{3.12}
&\frac{d}{dt}\int\Big(\frac12\rho|u|^2
+G(\rho)\Big)dx+(2\mu+\lambda)\int({\rm div}\,u)^2dx+\mu\int|{\rm curl}\,u|^2dx\nonumber\\
&=-\int\divv(\rho u)G'(\rho)dx-\int u\cdot\nabla(P-\bar{P})dx-\int u \cdot \nabla d\cdot\Delta ddx\nonumber\\
&=\int \rho u\cdot\nabla Q(\rho)dx-\int u\cdot\nabla Pdx-\int u \cdot \nabla d\cdot\Delta d dx\nonumber\\
&=-\int u \cdot\nabla d\cdot\Delta d dx,
\end{align}
where we have used \eqref{t3.8} and \eqref{a6} to obtain
\begin{align*}
\int\mathcal Lu\cdot udx&=\int\big[(2\mu+\lambda)\nabla\divv u\cdot u-\mu\curl\curl u\cdot u\big]dx\nonumber\\
&=-\int\big[(2\mu+\lambda)(\divv u)^2+\mu|\curl u|^2\big]dx,
\end{align*}
and
\begin{align*}
G'(\rho)=Q(\rho)-Q(\bar{\rho}),\quad Q'(\rho)=P'(\rho)/\rho.
\end{align*}
Multiplying \eqref{a3} by $\Delta d+|\nabla d|^2d$ and integrating by parts, we derive after using $|d|=1$ and $\frac{\partial d}{\partial n}|_{\partial\Omega}=0$
that
\begin{align*}
&\frac12\frac{d}{dt}\int|\nabla d|^2dx+\int|\Delta d+|\nabla d|^2d|^2dx\\
&=\int u\cdot\nabla d\cdot\Delta ddx
+\int\big(|\nabla d|^2 d\cdot d_t+|\nabla d|^2(u\cdot\nabla) d\cdot d\big)dx\\
&=\int u\cdot\nabla d\cdot\Delta ddx
+\frac12\int\big(|\nabla d|^2 \partial_t|d|^2+|\nabla d|^2u\cdot\nabla|d|^2\big)dx\\
&=\int u\cdot\nabla d\cdot\Delta ddx,
\end{align*}
which together with \eqref{3.12} yields \eqref{3.8}.

2. Integration by parts, we deduce from \eqref{a6}, \eqref{2.5}, and Lemma \ref{l23} that
\begin{align}\label{lz1}
\|\Delta d\|_{L^2}^2&=\sum_{i, j=1}^3\int\partial_{ii}d\cdot\partial_{jj}ddx=
-\sum_{i, j=1}^3\int\partial_id\cdot\partial_i\partial_{jj}ddx\nonumber\\
&=\sum_{i, j=1}^3\int|\partial_{ij}d|^2dx
-\sum_{i, j=1}^3\int_{\partial\Omega}\partial_id\cdot\partial_{ij}dn^jdS\nonumber\\
&=\sum_{i, j=1}^3\int|\partial_{ij}d|^2dx+\sum_{i, j=1}^3\int_{\partial\Omega}\partial_id\partial_in^j\partial_jddS\nonumber\\
&\ge \|\nabla^2 d\|_{L^2}^2-C\||\nabla d|^2\|_{W^{1, 1}}\nonumber\\
&\ge \|\nabla^2 d\|_{L^2}^2-C\|\nabla d\|_{L^2}\|\nabla^2d\|_{L^2}-C\|\nabla d\|_{L^2}^2\nonumber\\
&\ge \frac12\|\nabla^2 d\|_{L^2}^2-C\|\nabla d\|_{L^2}^2,
\end{align}
which combined with \eqref{t3.4} and \eqref{3.8} implies that
\begin{align}\label{t3.15}
\sup_{0\le t\le T}\|\nabla^2 d\|_{L^2}^2\le C.
\end{align}
It follows from \eqref{a3}, \eqref{2.3}, and \eqref{a6} that
\begin{align*}
\|d_t\|_{L^2}^2&\le C\big(\||u||\nabla d|\|_{L^2}^2+\||\nabla d|^2\|_{L^2}^2+\|\Delta d\|_{L^2}^2\big)\nonumber\\
&\le C\big(\|u\|_{L^6}^2\|\nabla d\|_{L^3}^2+\|\nabla d\|_{L^4}^4+\|\Delta d\|_{L^2}^2\big)\nonumber\\
&\le C\|\nabla u\|_{L^2}^2\|\nabla d\|_{L^2}\|\nabla^2d\|_{L^2}
+C\|\nabla d\|_{L^2}\|\nabla^2d\|_{L^2}^3+C\|\Delta d\|_{L^2}^2,
\end{align*}
which together with \eqref{3.8} and \eqref{t3.15} leads to \eqref{t3.6}.

3. In view of \eqref{1.7}, we see that there exists a positive constant $C$ depending only on $a$, $\gamma$, and $\hat{\rho}$ such that
\begin{align*}
&|P-\bar{P}|\le C|\rho-\bar{\rho}|, \quad C^{-1}(\rho-\bar{\rho})^2\le G(\rho)\le C(\rho-\bar{\rho})^2,
\end{align*}
which along with \eqref{3.8} gives that
\begin{align}\label{t3.13}
\sup_{0\le t\le T}\|\rho-\bar{\rho}\|_{L^2}^2\le CE_0.
\end{align}
We derive from \eqref{a3}, \eqref{a6}, \eqref{2.3}, \eqref{t3.4}, and \eqref{t3.15} that
\begin{align*}
\frac{d}{dt}\|\nabla d\|_{L^2}^2+\|d_t\|_{L^2}^2+\|\Delta d\|_{L^2}^2
&=\int|d_t-\Delta d|^2dx \notag \\
&=\int|u\cdot\nabla d-|\nabla d|^2|^2dx\nonumber\\
&\le C\|u\|_{L^6}^2\|\nabla d\|_{L^3}^2+C\|\nabla d\|_{L^4}^4\nonumber\\
&\le C\|\nabla u\|_{L^2}^2\|\nabla d\|_{L^2}\|\nabla^2d\|_{L^2}
+C\|\nabla d\|_{L^2}\|\nabla^2d\|_{L^2}^3\nonumber\\
&\le CE_0^\frac12,
\end{align*}
which together with \eqref{lz1} and \eqref{3.8} leads to
\begin{align}\label{t3.16}
\frac{d}{dt}\|\nabla d\|_{L^2}^2+\|d_t\|_{L^2}^2+\|\nabla^2 d\|_{L^2}^2
\le CE_0^{\frac12},
\end{align}
provided that $E_0\le 1$.
Integrating \eqref{t3.16} over $[0, \sigma(T)]$ and using \eqref{3.8}, we have
\begin{align}
\int_0^{\sigma(T)}\big(\|d_t\|_{L^2}^2+\|\nabla^2d|_{L^2}^2\big)dt\le CE_0^{\frac12}.
\end{align}
Denote $\sigma_i\triangleq\sigma(t+1-i)$. For any integer $1\le i\le [T]-1$, multiplying \eqref{t3.16} by $\sigma_i$, we arrive at
\begin{align}\label{t3.18}
\frac{d}{dt}\big(\sigma_i\|\nabla d\|_{L^2}^2\big)+\sigma_i\big(\|d_t\|_{L^2}^2+\|\nabla^2d\|_{L^2}^2\big)
\le \sigma_i'\|\nabla d\|_{L^2}^2+CE_0^{\frac12}\sigma_i
\le \|\nabla d\|_{L^2}^2+CE_0^{\frac12}.
\end{align}
Integrating \eqref{t3.18} over $(i-1, i+1]$, we obtain \eqref{3.9} from \eqref{3.8} and \eqref{t3.13}.
\end{proof}

\begin{lemma}\label{l33}
Let $(\rho, u, d)$ be a strong solution
of \eqref{a1}--\eqref{a6} satisfying \eqref{3.6}. Assume that
$\eta(t)\ge 0$ is a piecewise differentiable function, then it holds that
\begin{align}\label{t3.20}
&\frac{d}{dt}\Big(\frac{2\mu+\lambda}{2}\eta(t)\|{\rm div}\,u\|_{L^2}^2
+\frac{\mu}{2}\eta(t)\|{\rm curl}\,u\|_{L^2}^2+\eta(t)\|\Delta d\|_{L^2}^2\Big)+\frac12\eta(t)\|\sqrt{\rho}\dot{u}\|_{L^2}^2
+\frac12\eta(t)\|\nabla d_t\|_{L^2}^2\nonumber\\
&\le \frac{d}{dt}\int\eta(t)(P-\bar{P}){\rm div}\,udx+\frac{d}{dt}\int\eta(t)M(d):\nabla udx
+C\big(\eta(t)+|\eta'(t)|\big)\|\nabla u\|_{L^2}^2
\nonumber\\
&\quad+C\eta(t)\big(\|\nabla u\|_{L^3}^3+\|\nabla u\|_{L^2}^4
+\|\nabla u\|_{L^2}^2+\|\nabla d\|_{H^1}^2+\|\nabla d\|_{H^1}^6
+\|\nabla u\|_{L^2}^4\|\nabla d\|_{H^1}^2\big)\nonumber\\
&\quad +C|\eta'(t)\|\Delta d\|_{L^2}^2+C|\eta'(t)|\big(\|\nabla d\|_{L^2}\|\nabla d\|_{H^1}^3+\|\nabla u\|_{L^2}^2\big)
+C\eta(t)\|\nabla u\|_{L^2}^2\|\nabla d\|_{H^1}^2+C|\eta'(t)|E_0,
\end{align}
provided that $E_0\le \varepsilon_2$.
\end{lemma}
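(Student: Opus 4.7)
The plan is to combine two tested identities: take the $L^2$ inner product of \eqref{a2}, rewritten as $\rho\dot u=\mathcal{L}u-\nabla(P-\bar P)-\Delta d\cdot\nabla d$, with $\dot u=u_t+u\cdot\nabla u$, and independently test \eqref{a3} against $-2\Delta d_t$. After multiplying each resulting identity by $\eta(t)$ and summing, the LHS of \eqref{t3.20} will emerge directly. For the momentum test, $\int\mathcal L u\cdot u_t\,dx$ integrated by parts against the slip boundary condition \eqref{a6} yields $-\tfrac12\frac{d}{dt}[(2\mu+\lambda)\|\divv u\|_{L^2}^2+\mu\|\curl u\|_{L^2}^2]$, while $\int\mathcal L u\cdot(u\cdot\nabla u)\,dx$ contributes an $O(\|\nabla u\|_{L^3}^3)$ bulk remainder plus a boundary piece. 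The pressure gradient splits via IBP into the time-derivative term $\frac{d}{dt}\int(P-\bar P)\divv u\,dx$ (exactly as required on the RHS of \eqref{t3.20}), interior remainders involving $(P-\bar P)_t$ (replaced using \eqref{a1}) and $\divv(u\cdot\nabla u)$, and a boundary term $-\int_{\partial\Omega}(P-\bar P)\dot u\cdot n\,dS$. The director coupling is rewritten via $\Delta d\cdot\nabla d=\divv M(d)$, so that IBP produces the desired $\frac{d}{dt}\int M(d):\nabla u\,dx$ together with the remainders $\int\partial_t M(d):\nabla u\,dx$ and $\int M(d):\nabla(u\cdot\nabla u)\,dx$; since $\partial_t M(d)\sim\nabla d\odot\nabla d_t$, Young pairs this with the $\tfrac12\eta\|\nabla d_t\|_{L^2}^2$ on the LHS coming from the $d$-equation.

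For the $d$-equation test, using $\partial d_t/\partial n=0$ (a direct consequence of \eqref{a6}) and integrating by parts twice yields
\begin{align*}
2\|\nabla d_t\|_{L^2}^2+\frac{d}{dt}\|\Delta d\|_{L^2}^2=-2\int\nabla\bigl(u\cdot\nabla d-|\nabla d|^2 d\bigr):\nabla d_t\,dx,
\end{align*}
and Young's inequality with weight $\tfrac32$ hands over $\tfrac32\|\nabla d_t\|_{L^2}^2$, leaving $\tfrac12\|\nabla d_t\|_{L^2}^2+\frac{d}{dt}\|\Delta d\|_{L^2}^2$ on the left against $C(\|\nabla u\|_{L^2}^2\|\nabla d\|_{H^1}^2+\|\nabla d\|_{H^1}^6)$ via Gagliardo-Nirenberg (Lemma \ref{l21}) and $|d|=1$. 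Once both tested identities are multiplied by $\eta(t)$, the substitution $\eta\Phi'=(\eta\Phi)'-\eta'\Phi$ applied to each primitive $\Phi\in\{\|\divv u\|_{L^2}^2,\|\curl u\|_{L^2}^2,\|\Delta d\|_{L^2}^2,\int(P-\bar P)\divv u,\int M(d):\nabla u\}$ produces exactly the $|\eta'|$-weighted contributions on the right of \eqref{t3.20}. Each such $\Phi$ is $O(\|\nabla u\|_{L^2}^2+\|\nabla d\|_{H^1}^2+E_0)$ by \eqref{t3.4}, \eqref{3.8}, \eqref{lz1}, and the elementary estimate $|P-\bar P|\le C|\rho-\bar\rho|$, which accounts in particular for the tails $C|\eta'(t)|\|\nabla u\|_{L^2}^2$ and $C|\eta'(t)|E_0$; the mixed term $C|\eta'|\|\nabla d\|_{L^2}\|\nabla d\|_{H^1}^3$ arises from applying Gagliardo-Nirenberg to $\|M(d)\|_{L^p}\le C\|\nabla d\|_{L^{2p}}^2$ when processing $\eta'\int M(d):\nabla u$.

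The main obstacle is the family of boundary integrals accumulated at each IBP step, namely $\int_{\partial\Omega}(P-\bar P)\dot u\cdot n\,dS$, $\int_{\partial\Omega}M(d)n\cdot\dot u\,dS$, and the analogous terms arising from $\int\mathcal L u\cdot(u\cdot\nabla u)\,dx$. Identity \eqref{2.5} is decisive: since $u\cdot n=0$ on $\partial\Omega$ (hence also $u_t\cdot n=0$), we have $\dot u\cdot n=(u\cdot\nabla u)\cdot n=-u\cdot\nabla n\cdot u$ on $\partial\Omega$, which eliminates every boundary derivative of $u$. A parallel computation using $\partial d/\partial n=0$ reduces $M(d)n\big|_{\partial\Omega}$ to $-\tfrac12|\nabla d|^2 n$, so all surface contributions collapse to the schematic form $\int_{\partial\Omega}\Psi\,u\cdot\nabla n\cdot u\,dS$ with $\Psi\in\{P-\bar P,|\nabla d|^2,\divv u,\curl u\}$. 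The trace theorem (Lemma \ref{l23}) converts these to $W^{1,p}(\Omega)$-norms, which the Gagliardo-Nirenberg bound of Lemma \ref{l21} together with the effective-viscous-flux and vorticity $L^p$-estimates of Lemma \ref{l28} control in terms of interior quantities already appearing on the right of \eqref{t3.20}. The price of this trade-off is exactly the mix of cubic, quartic, and sixth-power terms in $\|\nabla u\|_{L^2}$ and $\|\nabla d\|_{H^1}$ visible in the stated inequality, and all absorption of $\tfrac12\eta\|\sqrt\rho\dot u\|_{L^2}^2$ and $\tfrac12\eta\|\nabla d_t\|_{L^2}^2$ on the left uses smallness of $E_0\le\varepsilon_2$ through the bounds \eqref{3.8}--\eqref{3.9}.
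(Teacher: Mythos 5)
Your overall architecture coincides with the paper's: test the momentum equation against $\eta(t)\dot u$, test (a gradient of) the director equation against $\eta(t)\nabla d_t$, extract the two exact time derivatives $\frac{d}{dt}\int\eta(P-\bar P)\divv u\,dx$ and $\frac{d}{dt}\int\eta M(d):\nabla u\,dx$, kill the boundary derivatives of $u$ via \eqref{2.5}, and control the surviving surface integrals through the trace theorem combined with the $F$ and $\curl u$ estimates of Lemma \ref{l28}. Your observation that $M(d)n|_{\partial\Omega}=-\tfrac12|\nabla d|^2n$ and hence the director boundary term vanishes against $u_t\cdot n=0$ is correct and matches the paper's treatment of $I_{41}$.

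There is, however, one genuine gap: you never account for third-order spatial derivatives of $d$, and they cannot be avoided. Concretely, (i) the surface integral $\int_{\partial\Omega}\divv u\,(u\cdot\nabla n\cdot u)\,dS$ is handled by substituting $\divv u=\frac{F+(P-\bar P)}{2\mu+\lambda}$ and invoking $\|\nabla F\|_{L^2}\le C(\|\rho\dot u\|_{L^2}+\||\nabla d||\nabla^2d|\|_{L^2})$, and $\||\nabla d||\nabla^2 d|\|_{L^2}\lesssim\|\nabla d\|_{H^1}^{3/2}\|\nabla^3d\|_{L^2}^{1/2}+\|\nabla d\|_{H^1}^2$ by Gagliardo--Nirenberg; (ii) the convective coupling term $\int\eta\,u\cdot\nabla u\cdot\Delta d\cdot\nabla d\,dx$ forces $\|\Delta d\|_{L^6}\lesssim\|\nabla^3d\|_{L^2}+\|\nabla^2d\|_{L^2}$; (iii) in the director estimate the remainders $\int|u|^2|\nabla^2d|^2$ and $\int|\nabla d|^2|\nabla^2d|^2$ produce $\|\nabla^2d\|_{L^3}^2$ and $\|\nabla^2d\|_{L^6}^2$, both of which again bring in $\|\nabla^3d\|_{L^2}$. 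These terms are \emph{not} controlled by the quantities on the right-hand side of \eqref{t3.20}, contrary to your claim that the director remainders are bounded by $C(\|\nabla u\|_{L^2}^2\|\nabla d\|_{H^1}^2+\|\nabla d\|_{H^1}^6)$. The missing ingredient is the elliptic (Neumann-problem) estimate \eqref{t3.34},
\begin{align*}
\|\nabla^3d\|_{L^2}^2\le C\|\nabla d_t\|_{L^2}^2+C\|\nabla d\|_{H^1}^2+C\big(\|\nabla u\|_{L^2}^2+\|\nabla u\|_{L^2}^4\big)\|\nabla^2d\|_{L^2}^2,
\end{align*}
valid for small $E_0$, which converts every $\delta\eta\|\nabla^3d\|_{L^2}^2$ and $CE_0^{1/2}\eta\|\nabla^3d\|_{L^2}^2$ into a small multiple of $\eta\|\nabla d_t\|_{L^2}^2$ plus admissible terms, to be absorbed into the left-hand side. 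This is precisely where the hypothesis $E_0\le\varepsilon_2$ enters (the coefficient $\|\nabla d\|_{L^3}^2\lesssim E_0^{1/2}$ in front of $\|\nabla^2d\|_{L^6}^2$ must be small), in addition to the absorptions of $\|\sqrt\rho\dot u\|_{L^2}^2$ and $\|\nabla d_t\|_{L^2}^2$ that you do mention. Without this step the argument does not close.
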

\begin{proof}[Proof]
1. Multiplying $\eqref{3.10}$ by $\eta(t)\dot{u}$ and integrating the resulting equality over $\Omega$ lead to
\begin{align}\label{3.17}
\int\eta(t)\rho|\dot{u}|^2dx
&=-\int\eta(t)\dot{u}\cdot\nabla(P-\bar{P})dx+(2\mu+\lambda)\int\eta(t)\nabla\divv u\cdot\dot{u}dx\nonumber\\
&\quad-\mu\int\eta(t)\curl\curl u\cdot\dot{u}dx
-\int\eta(t)\dot{u}\cdot\Delta d\cdot\nabla ddx
\triangleq\sum_{i=1}^4I_i.
\end{align}
By $\eqref{a1}_1$ and $P=a\rho^\gamma$, we have
\begin{align}\label{z3.13}
P_t+\divv(Pu)+(\gamma-1)P\divv u=0,
\end{align}
which together with integration by parts and \eqref{3.6} shows that
\begin{align}\label{3.20}
I_1&=-\int\eta(t)u_t\cdot\nabla(P-\bar{P})dx-\int\eta(t)u\cdot\nabla u\cdot\nabla Pdx\nonumber\\
&=\frac{d}{dt}\int\eta(t)(P-\bar{P}){\rm div}\,udx-\eta'(t)\int(P-\bar{P}){\rm div}\,udx-\int\eta(t){\rm div}\,uP_tdx
-\int\eta(t)u\cdot\nabla u\cdot\nabla Pdx\nonumber\\
&=\frac{d}{dt}\int\eta(t)(P-\bar{P}){\rm div}\,udx-\eta'(t)\int(P-\bar{P}){\rm div}\,udx+\int\eta(t){\rm div}\,u{\rm div}\,(Pu)dx\nonumber\\
&\quad+(\gamma-1)\int\eta(t)P({\rm div}\,u)^2dx-\int\eta(t)u\cdot\nabla u\cdot\nabla Pdx\nonumber\\
&=\frac{d}{dt}\int\eta(t)(P-\bar{P}){\rm div}\,udx-\eta'(t)\int(P-\bar{P}){\rm div}\,udx
+\int\eta(t)P\nabla u:\nabla udx\nonumber\\
&\quad+(\gamma-1)\int\eta(t)P({\rm div}\,u)^2dx-\int_{\partial\Omega}\eta(t)Pu\cdot\nabla u\cdot ndS
\nonumber\\
&\le \frac{d}{dt}\int\eta(t)(P-\bar{P}){\rm div}\,udx+C\eta(t)\|\nabla u\|_{L^2}^2
+|\eta'(t)|\|P-\bar{P}\|_{L^2}\|\nabla u\|_{L^2}\nonumber\\
&\le \frac{d}{dt}\int\eta(t)(P-\bar{P}){\rm div}\,udx+C(\eta(t)+|\eta'(t)|)\|\nabla u\|_{L^2}^2+C|\eta'(t)|\|P-\bar{P}\|_{L^2}^2\nonumber\\
&\le \frac{d}{dt}\int\eta(t)(P-\bar{P}){\rm div}\,udx+C(\eta(t)+|\eta'(t)|)\|\nabla u\|_{L^2}^2+C|\eta'(t)|E_0,
\end{align}
where we have used
\begin{align*}
\int\eta(t){\rm div}\,(Pu){\rm div}\,udx
&=-\int\eta(t) Pu^j\partial_{ji}u^idx
=\int\eta(t)\partial_i(Pu^j)\partial_ju^idx\nonumber\\
&=-\int_{\partial\Omega}\eta(t)Pu\cdot\nabla u\cdot ndS
+\int\eta(t)\partial_iPu^j\partial_ju^idx+\int\eta(t)P\partial_iu^j\partial_ju^idx\nonumber\\
&=-\int_{\partial\Omega}\eta(t)Pu\cdot\nabla u\cdot ndS+\int\eta(t) u\cdot\nabla u\cdot\nabla Pdx+\int\eta(t) P\nabla u:\nabla udx,
\end{align*}
and
\begin{align}\label{z3.21}
-\int_{\partial\Omega}\eta(t)Pu\cdot\nabla u\cdot ndS&=\int_{\partial\Omega}\eta(t)Pu\cdot\nabla n\cdot udS
\le C\eta(t)\int_{\partial\Omega}|u|^2dS\le C\eta(t)\|\nabla u\|_{L^2}^2,
\end{align}
due to \eqref{2.5}, \eqref{3.6}, Lemma \ref{l23}, and \eqref{2.3}. Here and in what follows, we use the Einstein convention that the repeated indices denote the summation.

2. By \eqref{a6} and \eqref{2.5}, we derive from integration by parts that
\begin{align*}
I_2&=(2\mu+\lambda)\int_{\partial\Omega}\eta(t){\rm div}\,u(\dot{u}\cdot n)dS-(2\mu+\lambda)\int\eta(t){\rm div}\,u{\rm div}\,\dot{u}dx\nonumber\\
&=(2\mu+\lambda)\int_{\partial\Omega}\eta(t){\rm div}\,u(u\cdot\nabla u\cdot n)dS-\frac{2\mu+\lambda}{2}\frac{d}{dt}\int\eta(t)({\rm div}\,u)^2dx\nonumber\\
&\quad-(2\mu+\lambda)\int\eta(t){\rm div}\,u{\rm div}\,(u\cdot\nabla u)dx+\frac{2\mu+\lambda}{2}\eta'(t)\int({\rm div}u)^2dx\nonumber\\
&=-\frac{2\mu+\lambda}{2}\frac{d}{dt}\int\eta(t)(\divv u)^2dx
-(2\mu+\lambda)\int_{\partial\Omega}\eta(t)\divv u(u\cdot\nabla n\cdot u)dS\nonumber\\
&\quad-(2\mu+\lambda)\int\eta(t){\rm div}\,u\partial_i(u^j\partial_ju^i)dx+\frac{2\mu+\lambda}{2}\eta'(t)\int({\rm div}\,u)^2dx\nonumber\\
&=-\frac{2\mu+\lambda}{2}\frac{d}{dt}\int\eta(t)({\rm div}\,u)^2dx
-(2\mu+\lambda)\int_{\partial\Omega}\eta(t){\rm div}\,u(u\cdot\nabla n\cdot u)dS\nonumber\\
&\quad-(2\mu+\lambda)\int\eta(t){\rm div}\,u\nabla u:\nabla udx-(2\mu+\lambda)\int\eta(t){\rm div}\,uu^j\partial_{ji}u^idx+\frac{2\mu+\lambda}{2}\eta'(t)\int({\rm div}\,u)^2dx\nonumber\\
&=-\frac{2\mu+\lambda}{2}\frac{d}{dt}\int\eta(t)({\rm div}\,u)^2dx
-(2\mu+\lambda)\int_{\partial\Omega}\eta(t){\rm div}\,u(u\cdot\nabla n\cdot u)dS\nonumber\\
&\quad-(2\mu+\lambda)\int\eta(t){\rm div}\,u\nabla u:\nabla udx+\frac{2\mu+\lambda}{2}\int\eta(t)({\rm div}\,u)^3dx
+\frac{2\mu+\lambda}{2}\eta'(t)\int({\rm div}\,u)^2dx\nonumber\\
&\le -\frac{2\mu+\lambda}{2}\frac{d}{dt}\int\eta(t)({\rm div}\,u)^2dx
+\frac12\eta(t)\|\sqrt{\rho}\dot{u}\|_{L^2}^2+\delta\eta(t)\|\nabla^3d\|_{L^2}^2
+C|\eta'(t)|\|\nabla u\|_{L^2}^2\nonumber\\
&\quad+C\eta(t)\big(\|\nabla u\|_{L^3}^3+\|\nabla u\|_{L^2}^4
+\|\nabla u\|_{L^2}^2+\|\nabla d\|_{H^1}^2\big),
\end{align*}
where we have used
\begin{align*}
\int\eta(t){\rm div}\,uu^j\partial_{ji}u^idx
&=-\int\eta(t)\partial_j(\partial_ku^ku^j)\partial_iu^idx\nonumber\\
&=-\int\eta(t)\partial_{jk}u^ku^j\partial_iu^idx
-\int\eta(t)\divv u\partial_ju^j\partial_iu^idx\nonumber\\
&=-\int\eta(t)\partial_{ji}u^iu^j\divv udx
-\int\eta(t)({\rm div}\,u)^3dx,
\end{align*}
and
\begin{align}\label{3.24}
&\Big|-(2\mu+\lambda)\int_{\partial\Omega}{\rm div}\,u(u\cdot\nabla n\cdot u)dS\Big|\nonumber\\
&=\Big|-\int_{\partial\Omega}\big(F+(P-\bar{P})\big)(u\cdot\nabla n\cdot u)dS\Big|\nonumber\\
&\le \Big|\int_{\partial\Omega}F(u\cdot\nabla n\cdot u)dS\Big|+\Big|\int_{\partial\Omega}(P-\bar{P})(u\cdot\nabla n\cdot u)dS\Big|
\nonumber\\
&\le C\int_{\partial\Omega}|F_1||u|^2dS+C\int_{\partial\Omega}|u|^2dS\nonumber\\
&\le C\big(\|\nabla F\|_{L^2}\|u\|_{L^4}^2+\|F\|_{L^6}\|u\|_{L^3}\|\nabla u\|_{L^2}
+\|F\|_{L^2}\|u\|_{L^4}^2\big)+C\|\nabla u\|_{L^2}^2\nonumber\\
&\le C\|F\|_{H^1}\|u\|_{H^1}^2+C\|\nabla u\|_{L^2}^2\nonumber\\
&\le \frac{1}{2}\|\sqrt{\rho}\dot{u}\|_{L^2}^2+\delta\|\nabla^3 d\|_{L^2}^2
+C\big(\|\nabla u\|_{L^2}^4+\|\nabla u\|_{L^2}^2+\|\nabla d\|_{H^1}^2\big),
\end{align}
due to Lemma \ref{l23}, \eqref{2.3}, Lemma \ref{l28}, and
\begin{align*}
\||\nabla d||\nabla^2 d|\|_{L^2}
&\le C\|\nabla d\|_{L^6}\|\nabla^2d\|_{L^3} \\
&\le C\|\nabla d\|_{H^1}\|\nabla^2d\|_{L^2}^\frac12\|\nabla^2d\|_{L^6}^\frac12 \\
& \le C\|\nabla d\|_{H^1}^\frac32\|\nabla^2 d\|_{L^6}^\frac12\  \\
&\le C\|\nabla d\|_{H^1}^\frac32\big(\|\nabla^3d\|_{L^2}+\|\nabla^2d\|_{L^2}\big)^\frac12\ \ (\text{by Lemma } \ref{l25}) \\
&\le C\|\nabla d\|_{H^1}^\frac32\|\nabla^3d\|_{L^2}^\frac12+\|\nabla d\|_{H^1}^2.
\end{align*}

3. Noting that
\begin{align*}
\int{\rm curl}\,u\cdot(u^i\partial_i{\rm curl}\,u)dx=-\int{\rm curl}\,u\cdot(u^i\partial_i{\rm curl}\,u)dx-\int|{\rm curl}\,u|^2{\rm div}\,udx.
\end{align*}
Thus, we have
\begin{align*}
\int{\rm curl}\,u\cdot(u^i\partial_i{\rm curl}\,u)dx
=-\frac12\int|{\rm curl}\,u|^2{\rm div}\,udx.
\end{align*}
This implies that
\begin{align*}
\mu\int{\rm curl}\,u\cdot{\rm curl}\,(u\cdot\nabla u)dx
&=\mu\int{\rm curl}\,u\cdot{\rm curl}\,(u^i\partial_i u)dx\nonumber\\
&=\mu\int{\rm curl}\,u\cdot\big(u^i{\rm curl}\,\partial_iu+\nabla u^i\times\partial_iu\big)dx\nonumber\\
&=-\mu\int\partial_i({\rm curl}\,u u^i){\rm curl}\,udx+\mu\int(\nabla u^i\times\partial_iu)\cdot{\rm curl}\,udx\nonumber\\
&=-\frac{\mu}{2}\int|{\rm curl}\,u|^2{\rm div}\,udx+\mu\int(\nabla u^i\times\partial_iu)\cdot{\rm curl}\,udx,
\end{align*}
which combined with \eqref{a6} and integration by parts leads to
\begin{align}\label{3.26}
I_3&=-\mu\int\eta(t){\rm curl}\,u\cdot{\rm curl}\,\dot{u}dx\nonumber\\
&=-\frac{\mu}{2}\frac{d}{dt}\int\eta(t)|{\rm curl}\,u|^2dx
+\frac{\mu}{2}\eta'(t)\int|{\rm curl}\,u|^2dx
-\mu\int\eta(t){\rm curl}\,u\cdot{\rm curl}\,(u\cdot\nabla u)dx\nonumber\\
&=-\frac{\mu}{2}\frac{d}{dt}\int\eta(t)|{\rm curl}\,u|^2dx
+\frac{\mu}{2}\eta'(t)\int|{\rm curl}\,u|^2dx\nonumber\\
&\quad-\mu\int\eta(t)(\nabla u^i\times\partial_iu)\cdot{\rm curl}\,udx
+\frac{\mu}{2}\int\eta(t)|{\rm curl}\,u|^2{\rm div}\,udx\nonumber\\
&\le -\frac{\mu}{2}\frac{d}{dt}\int\eta(t)|{\rm curl}\,u|^2dx
+C|\eta'(t)|\|\nabla u\|_{L^2}^2+C\eta(t)\|\nabla u\|_{L^3}^3.
\end{align}

4. Noticing that
\begin{align*}
I_4&=-\int\eta(t)u_t\cdot\Delta d\cdot\nabla ddx-\int\eta(t) u\cdot\nabla u\cdot\Delta d\cdot\nabla ddx
\triangleq I_{41}+I_{42}.
\end{align*}
Using \eqref{a6}, H\"older's inequality, Sobolev's inequality, and \eqref{2.3}, we have
\begin{align}\label{t3.27}
I_{41}&=\int\eta(t)M(d):\nabla u_tdx\nonumber\\
&=\frac{d}{dt}\int\eta(t)M(d):\nabla udx-\eta'(t)\int M(d):\nabla udx-\int\eta(t)M(d)_t:\nabla udx\nonumber\\
&\le\frac{d}{dt}\int\eta(t)M(d):\nabla udx+C|\eta'(t)|\|\nabla d\|_{L^4}^2\|\nabla u\|_{L^2}+C\eta(t)\|\nabla u\|_{L^3}\|\nabla d_t\|_{L^2}\|\nabla d\|_{L^6}\nonumber\\
&\le \frac{d}{dt}\int\eta(t)M(d):\nabla udx+C|\eta'(t)|\big(\|\nabla d\|_{L^2}\|\nabla d\|_{H^1}^3+\|\nabla u\|_{L^2}^2\big)\nonumber\\
&\quad+\delta\eta(t)\|\nabla d_t\|_{L^2}^2+C\eta(t)\big(\|\nabla u\|_{L^3}^3+\|\nabla d\|_{H^1}^6\big),
\end{align}
and
\begin{align}\label{t3.28}
I_{42}&\le C\eta(t)\|u\|_{L^6}\|\nabla u\|_{L^2}\|\Delta d\|_{L^6}\|\nabla d\|_{L^6}\nonumber\\
&\le C\eta(t)\|\nabla u\|_{L^2}^2\big(\|\nabla^3 d\|_{L^2}+\|\nabla^2d\|_{L^2}\big)\|\nabla d\|_{H^1}\nonumber\\
&\le \delta\eta(t)\|\nabla^3d\|_{L^2}^2+C\eta(t)\|\nabla u\|_{L^2}^4\|\nabla d\|_{H^1}^2+C\eta(t)\|\nabla u\|_{L^2}^2\|\nabla d\|_{H^1}^2.
\end{align}
Combining \eqref{t3.27} and \eqref{t3.28}, we deduce that
\begin{align*}
I_4&\le \frac{d}{dt}\int\eta(t)M(d):\nabla udx+\delta\eta(t)\big(\|\nabla d_t\|_{L^2}^2+\|\nabla^3d\|_{L^2}^2\big)
+C|\eta'(t)|\big(\|\nabla d\|_{L^2}\|\nabla d\|_{H^1}^3+\|\nabla u\|_{L^2}^2\big)\nonumber\\
&\quad+C\eta(t)\big(\|\nabla u\|_{L^3}^3+\|\nabla d\|_{H^1}^6
+\|\nabla u\|_{L^2}^4\|\nabla d\|_{H^1}^2+\|\nabla u\|_{L^2}^2\|\nabla d\|_{H^1}^2\big).
\end{align*}
Putting the above estimates on $I_i\ (i=1, 2, 3, 4)$ into \eqref{3.17}, one obtains that
\begin{align}\label{2.35}
&\frac{d}{dt}\Big(\frac{2\mu+\lambda}{2}\eta(t)\|{\rm div}\,u\|_{L^2}^2
+\frac{\mu}{2}\eta(t)\|{\rm curl}\,u\|_{L^2}^2\Big)+\frac12\eta(t)\|\sqrt{\rho}\dot{u}\|_{L^2}^2\nonumber\\
&\le \frac{d}{dt}\int\eta(t)(P-\bar{P}){\rm div}\,udx+\frac{d}{dt}\int\eta(t)M(d):\nabla udx
+\delta\eta(t)\big(\|\nabla^3d\|_{L^2}^2+\|\nabla d_t\|_{L^2}^2\big)\nonumber\\
&\quad+C\big(\eta(t)+|\eta'(t)|\big)\|\nabla u\|_{L^2}^2+C|\eta'(t)|\big(\|\nabla d\|_{L^2}\|\nabla d\|_{H^1}^3+\|\nabla u\|_{L^2}^2\big)\nonumber\\
&\quad+C\eta(t)\big(\|\nabla u\|_{L^3}^3+\|\nabla u\|_{L^2}^4
+\|\nabla u\|_{L^2}^2+\|\nabla d\|_{H^1}^2+\|\nabla d\|_{H^1}^6
+\|\nabla u\|_{L^2}^4\|\nabla d\|_{H^1}^2\big)\nonumber\\
&\quad+C\eta(t)\|\nabla u\|_{L^2}^2\|\nabla d\|_{H^1}^2+C|\eta'(t)|E_0.
\end{align}

5. It remain to estimate $\|\nabla d_t\|_{L^2}^2$. To this end, applying the operator $\nabla$ to \eqref{a3} gives that
\begin{align}\label{z3.31}
\nabla d_t-\nabla\Delta d=-\nabla(u\cdot\nabla d)+\nabla(|\nabla d|^2d).
\end{align}
Multiplying \eqref{z3.31} by $\nabla d_t$ and integration by parts, we find that
\begin{align*}
&\frac{d}{dt}\int|\Delta d|^2dx+\int|\nabla d_t|^2dx\nonumber\\
&=\int\big(\nabla(|\nabla d|^2d)-\nabla(u\cdot\nabla d)\big)\nabla d_tdx\nonumber\\
&\le \frac14\|\nabla d_t\|_{L^2}^2+C\int\big(|\nabla d|^2|\nabla^2d|^2+|\nabla d|^6+|\nabla u|^2|\nabla d|^2+|u|^2|\nabla^2d|^2\big)dx\nonumber\\
&\le \delta\|\nabla d_t\|_{L^2}^2+C\|\nabla d\|_{L^3}^2\|\nabla^2d\|_{L^6}^2+C\|\nabla d\|_{L^6}^6
+C\|\nabla u\|_{L^3}^2\|\nabla d\|_{L^6}^2
+C\|u\|_{L^6}^2\|\nabla^2d\|_{L^3}^2\nonumber\\
&\le \frac14\|\nabla d_t\|_{L^2}^2+C\|\nabla d\|_{L^2}\|\nabla d\|_{L^6}\big(\|\nabla^3d\|_{L^2}^2+\|\nabla^2d\|_{L^2}^2\big)
+C\|\nabla d\|_{H^1}^6\nonumber\\
&\quad+C\|\nabla u\|_{L^3}^3+C\|\nabla u\|_{L^2}^2\|\nabla^2d\|_{L^2}\big(\|\nabla^3d\|_{L^2}+\|\nabla^2d\|_{L^2}\big)\nonumber\\
&\le \frac14\|\nabla d_t\|_{L^2}^2+\Big(CE_0^\frac12+\delta\Big)\|\nabla^3d\|_{L^2}^2
+C\|\nabla u\|_{L^2}^2\|\nabla^2d\|_{L^2}^2
+C\|\nabla u\|_{L^2}^4\|\nabla^2d\|_{L^2}^2 \notag \\
& \quad +C\|\nabla u\|_{L^3}^3+C\|\nabla d\|_{H^1}^2+C\|\nabla d\|_{H^1}^6,
\end{align*}
which implies that
\begin{align}\label{t3.33}
&\frac{d}{dt}(\eta(t)\|\Delta d\|_{L^2}^2)+\eta(t)\|\nabla d_t\|_{L^2}^2\nonumber\\
&\le \eta'(t)\|\Delta d\|_{L^2}^2+\frac14\eta(t)\|\nabla d_t\|_{L^2}^2+\Big(CE_0^\frac12+\delta\Big)\eta(t)\|\nabla^3d\|_{L^2}^2
+C\eta(t)\|\nabla d\|_{H^1}^4\nonumber\\
&\quad+C\eta(t)\|\nabla u\|_{L^2}^2\|\nabla^2d\|_{L^2}^2+C\eta(t)\|\nabla u\|_{L^2}^4\|\nabla^2d\|_{L^2}^2\notag \\
& \quad +C\eta(t)\|\nabla u\|_{L^3}^3+C\eta(t)\|\nabla d\|_{H^1}^2+C\eta(t)\|\nabla d\|_{H^1}^6.
\end{align}
Applying the $L^2$-theory to the Neumann boundary value problem of elliptic equations (see \cite{L13}),
we infer from \eqref{z3.31}, \eqref{3.8}, and \eqref{t3.6} that
\begin{align*}
\|\nabla^3d\|_{L^2}^2&\le C\|\nabla\Delta d\|_{L^2}^2+\|\nabla d\|_{H^1}^2\nonumber\\
&\le C\|\nabla d_t\|_{L^2}^2+C\|\nabla(u\cdot\nabla d)\|_{L^2}^2
+C\|\nabla(|\nabla d|^2d)\|_{L^2}^2+C\|\nabla d\|_{H^1}^2\nonumber\\
&\le C\|\nabla d_t\|_{L^2}^2+\Big(CE_0^\frac12+\frac14\Big)\|\nabla^3d\|_{L^2}^2
+C\|\nabla d\|_{H^1}^2+C\|\nabla u\|_{L^2}^2\|\nabla^2d\|_{L^2}^2
+C\|\nabla u\|_{L^2}^4\|\nabla^2d\|_{L^2}^2,
\end{align*}
which leads to
\begin{align}\label{t3.34}
\|\nabla^3d\|_{L^2}^2\le C\|\nabla d_t\|_{L^2}^2+C\|\nabla d\|_{H^1}^2+C\|\nabla u\|_{L^2}^2\|\nabla^2d\|_{L^2}^2
+C\|\nabla u\|_{L^2}^4\|\nabla^2d\|_{L^2}^2,
\end{align}
provided that $E_0\le \varepsilon_1$ is suitably small. Substituting \eqref{t3.34} into \eqref{t3.33}, one has
\begin{align*}
&\frac{d}{dt}(\eta(t)\|\Delta d\|_{L^2}^2)+\frac12\eta(t)\|\nabla d_t\|_{L^2}^2\nonumber\\
&\le \eta'(t)\|\Delta d\|_{L^2}^2
+C\eta(t)\|\nabla u\|_{L^2}^2\|\nabla^2d\|_{L^2}^2+C\eta(t)\|\nabla u\|_{L^2}^4\|\nabla^2d\|_{L^2}^2\notag \\
& \quad +C\eta(t)\|\nabla u\|_{L^3}^3+C\eta(t)\|\nabla d\|_{H^1}^2+C\eta(t)\|\nabla d\|_{H^1}^6.
\end{align*}
This together with \eqref{2.35} and \eqref{t3.34} gives \eqref{t3.20} after choosing $E_0\le \varepsilon_2\le \varepsilon_1$ and $\delta$ sufficiently small.
\end{proof}

\begin{lemma}\label{zl33}
Let $(\rho, u, d)$ be a strong solution
of \eqref{a1}--\eqref{a6} satisfying \eqref{3.6} and $\eta(t)$ be as in Lemma \ref{l33}, then it holds that
\begin{align}\label{t3.36}
&\frac{d}{dt}\Big(\frac{\eta(t)}{2}\|\sqrt{\rho}\dot{u}\|_{L^2}^2+\frac{\eta(t)}{2}\|\nabla d_t\|_{L^2}^2\Big)
+(2\mu+\lambda)\eta(t)\|{\rm div}\,\dot{u}\|_{L^2}^2+\mu\eta(t)\|{\rm curl}\,\dot{u}\|_{L^2}^2+\eta(t)\|d_{tt}\|_{L^2}^2\nonumber\\
&\le -C\frac{d}{dt}\int_{\partial\Omega}\eta(t)(u\cdot\nabla n\cdot u)FdS+C|\eta'(t)|\big(\|\nabla u\|_{L^2}^4+\|\nabla^2d\|_{L^2}^4+\|\nabla^2d\|_{L^2}^6+E_0\big)\nonumber\\
&\quad+C|\eta'(t)|\big(\|\sqrt{\rho}\dot{u}\|_{L^2}^2+\|\nabla u\|_{L^2}^2+\|\nabla^3d\|_{L^2}^2
+CE_0^2\|\nabla^2d\|_{L^2}^2+\|\nabla d_t\|_{L^2}^2\big)\nonumber\\
&\quad+C\eta(t)\big(\|\sqrt{\rho}\dot{u}\|_{L^2}^2\|\nabla u\|_{L^2}^2
+\|\nabla u\|_{L^2}^4\|\nabla^3d\|_{L^2}^2+\|\nabla u\|_{L^2}^2\|\nabla^3d\|_{L^2}^2
\big)\nonumber\\
&\quad+C\eta(t)\big(\|\nabla u\|_{L^2}^4+E_0^2\|\nabla u\|_{L^2}^2
+\|\nabla u\|_{L^4}^4+\|\nabla u\|_{L^2}^6+\|\nabla^2d\|_{L^2}^4\|\nabla u\|_{L^2}^2\big)\nonumber\\
&\quad+C\delta\eta(t)\big(\|\nabla^2d\|_{L^2}\|\nabla^3d\|_{L^2}^3+\|\nabla^2d\|_{L^2}^2\|\nabla^3d\|_{L^2}^2
+\|\nabla^2d\|_{L^2}^4
+\|\nabla^2d\|_{L^2}^3\|\nabla^3d\|_{L^2}\big)\nonumber\\
&\quad+C\eta(t)\big(\|\nabla u\|_{L^2}^2\|\nabla d\|_{H^1}^4
+\|\nabla u\|_{L^2}^2\|\nabla d\|_{H^1}^2\|\nabla^3d\|_{L^2}^2
+\|\nabla d\|_{H^1}^2\|\nabla d_t\|_{L^2}^2\big)\nonumber\\
&\quad+C\eta(t)\big(\|\nabla u\|_{L^2}^4\|\nabla d_t\|_{L^2}^2+\|\nabla u\|_{L^2}^2\|\nabla d_t\|_{L^2}^2\big)
+C\eta(t)\|\nabla d\|_{H^1}^4\big(\|d_t\|_{L^2}^2+\|\nabla d_t\|_{L^2}^2\big),
\end{align}
provided that $E_0\leq\varepsilon_3$.
\end{lemma}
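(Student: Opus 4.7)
The estimate is obtained by assembling two parallel energy identities, each multiplied by $\eta(t)$: one for the material derivative $\dot u$ of the velocity, arising from the momentum equation, and one for the time derivative $d_t$ of the director, arising from \eqref{a3}. The structure mirrors that of Lemma \ref{l33}, but now the test functions $\dot u$ and $d_{tt}$ generate the higher-order dissipation $\|\divv\dot u\|_{L^2}^2+\|\curl\dot u\|_{L^2}^2+\|d_{tt}\|_{L^2}^2$ on the left, while all new error terms must be expressed through $\|\sqrt\rho\dot u\|_{L^2}$, $\|\nabla d_t\|_{L^2}$, $\|\nabla u\|_{L^2}$, $\|\nabla^2 d\|_{L^2}$, $\|\nabla^3 d\|_{L^2}$, $E_0$ and their appropriate powers, so they land on the right-hand side of \eqref{t3.36}.

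For the velocity side, I apply the operator $\partial_t+\divv(u\,\cdot)$ to \eqref{3.10}, use \eqref{a1} to rewrite the left in material-derivative form, and test against $\eta(t)\dot u$. The viscous terms, after integration by parts, yield $(2\mu+\lambda)\eta\|\divv\dot u\|_{L^2}^2+\mu\eta\|\curl\dot u\|_{L^2}^2$ together with surface integrals on $\partial\Omega$. Because $u\cdot n=0$, the identity \eqref{2.5} gives $\dot u\cdot n=-u\cdot\nabla n\cdot u$ on $\partial\Omega$, so each $\int_{\partial\Omega}(\cdots)(\dot u\cdot n)\,dS$ is converted into $\int_{\partial\Omega}(\cdots)(u\cdot\nabla n\cdot u)\,dS$. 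Combining the viscous and pressure contributions and using the definition \eqref{2.7} of $F$, the surviving boundary term is organized into $-C\frac{d}{dt}\int_{\partial\Omega}\eta(t)(u\cdot\nabla n\cdot u)F\,dS$ plus error pieces that are dominated, via Lemma \ref{l23} and Lemma \ref{l28}, by the quantities on the right-hand side of \eqref{t3.36}. The commutators generated by $\partial_t+\divv(u\cdot)$ acting on $\nabla\divv u$ and $\curl\curl u$ produce cubic $\nabla u$-terms controlled by Gagliardo--Nirenberg \eqref{2.3} together with \eqref{2.8}, \eqref{2.16} and \eqref{2.13}; the pressure commutator is handled using \eqref{z3.13}; and the cross term coming from $\Delta d\cdot\nabla d$, once the time derivative is distributed, contributes factors involving $\nabla d_t$ and $\nabla^3 d$, which are reduced to $\|\nabla d_t\|_{L^2}^2$ plus lower-order pieces by invoking the elliptic bound \eqref{t3.34}.

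For the director side, I differentiate \eqref{a3} in time to obtain $d_{tt}-\Delta d_t=-u_t\cdot\nabla d-u\cdot\nabla d_t+\partial_t(|\nabla d|^2 d)$, multiply by $\eta(t)d_{tt}$, and integrate. Since $\partial d/\partial n|_{\partial\Omega}=0$ implies $\partial d_t/\partial n|_{\partial\Omega}=0$, integration by parts on $-\int\Delta d_t\cdot d_{tt}\,dx$ produces $\tfrac12\frac{d}{dt}\|\nabla d_t\|_{L^2}^2$ with no boundary term, so the $d$-contribution to the left-hand side of \eqref{t3.36} emerges cleanly. On the right-hand side I substitute $u_t=\dot u-u\cdot\nabla u$ to reintroduce the controlled quantity $\sqrt\rho\dot u$, apply H\"older, Sobolev and \eqref{2.3}, and absorb a fraction of $\|d_{tt}\|_{L^2}^2+\|\nabla d_t\|_{L^2}^2$ into the left. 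Each remaining product falls in one of the groups on the right of \eqref{t3.36}: for instance, $|\partial_t(|\nabla d|^2 d)|\lesssim|\nabla d||\nabla d_t|+|\nabla d|^2|d_t|$, which, combined with $\|\nabla d\|_{L^\infty}\lesssim\|\nabla d\|_{H^1}^{1/2}\|\nabla^2 d\|_{H^1}^{1/2}$ and \eqref{t3.34}, produces exactly the mixed terms $\|\nabla d\|_{H^1}^2\|\nabla d_t\|_{L^2}^2$ and $\|\nabla d\|_{H^1}^4(\|d_t\|_{L^2}^2+\|\nabla d_t\|_{L^2}^2)$ appearing on the right.

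The main obstacle is the conversion of the surface integral carrying $F$ into a $d/dt$-term so that it can be absorbed after time integration. This requires using \eqref{2.5} to unload the material derivative onto $u\otimes u$, bounding the resulting interior pieces through the $H^1$-estimate \eqref{2.11} of $F$, and controlling $\||\nabla d||\nabla^2 d|\|_{L^2}$ via Lemma \ref{l25} and \eqref{t3.34}; each of these invocations costs a factor $\|\nabla^3 d\|_{L^2}^{1/2}$ or $\|\nabla d_t\|_{L^2}$ that must ultimately be absorbed on the left. This is what forces the smallness hypothesis $E_0\le\varepsilon_3\le\varepsilon_2$: through $\|P-\bar P\|_{L^2}^2\le CE_0$ and \eqref{t3.34}, the prefactors of the borderline quadratic terms $\|\nabla^3 d\|_{L^2}^2$ and $\|\nabla d_t\|_{L^2}^2$ become $O(E_0^{1/2})+\delta$ and can be moved to the left, leaving exactly the estimate \eqref{t3.36}.
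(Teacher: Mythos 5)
Your proposal follows essentially the same route as the paper: applying $\partial_t+\divv(u\,\cdot)$ to the momentum equation and testing with $\eta(t)\dot u$, converting the boundary integral carrying $F_t$ into $-\frac{d}{dt}\int_{\partial\Omega}\eta(u\cdot\nabla n\cdot u)F\,dS$ via \eqref{2.5} and Lemma \ref{l23}, differentiating \eqref{a3} in time and testing with $\eta(t)d_{tt}$, and using \eqref{t3.34} together with the smallness of $E_0$ to absorb the borderline $\|\nabla^3 d\|_{L^2}^2$ and $\|\nabla d_t\|_{L^2}^2$ contributions. The only detail left implicit is that the $\|\nabla d_t\|_{L^3}$ factors generate $\|\nabla^2 d_t\|_{L^2}$, which must be closed by a separate elliptic estimate for the time-differentiated equation \eqref{a3} reducing it to $\|d_{tt}\|_{L^2}$ plus controlled terms, exactly as the paper does in its final step.
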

\begin{proof}[Proof]
1. By \eqref{2.7} and \eqref{t3.8}, we rewrite $\eqref{a2}$ as
\begin{align}\label{3.41}
\rho\dot{u}=\nabla F-\mu\curl{\rm curl}\,u-\Delta d\cdot\nabla d.
\end{align}
Applying $\eta(t)\dot{u}^j[\partial/\partial t+\divv(u\cdot)]$ to the $j$th-component of $\eqref{3.41}$,
 and then integrating the resulting equality over $\Omega$, we get that
\begin{align}\label{3.44}
&\frac12\left(\frac{d}{dt}\int\eta(t)\rho|\dot{u}|^2dx
-\eta'(t)\int\rho|\dot{u}|^2dx\right)\nonumber\\
&=\int\eta(t)\big(\dot{u}\cdot\nabla F_t+\dot{u}^j{\rm div}\,(u\partial_jF)\big)dx\nonumber\\
&\quad-\mu\int\eta(t)\big(\dot{u}\cdot\curl{\rm curl}\,u_t+\dot{u}^j{\rm div}\,((\curl{\rm curl}\,u)^ju)\big)dx\nonumber\\
&\quad+\int\eta(t)\big(M(d):\nabla u_t+\divv M(d)\cdot(u\cdot\nabla\dot{u})\big)dx
\triangleq\sum_{i=1}^3J_i.
\end{align}
We denote by $h\triangleq u\cdot(\nabla n+(\nabla n)^{tr})$ and $u^\bot\triangleq-u\times n$, then it deduces from Lemma \ref{l23} that
\begin{align}\label{z3.22}
&-\int_{\partial\Omega}\eta(t)F_t(u\cdot\nabla n\cdot u)dS\nonumber\\
&=-\frac{d}{dt}\int_{\partial\Omega}\eta(t)(u\cdot\nabla n\cdot u)FdS
+\int_{\partial\Omega}\eta(t)F_1h\cdot\dot{u}dS-\int_{\partial\Omega}\eta(t)
Fh\cdot(u\cdot\nabla u)dS\nonumber\\
&\quad+\eta'(t)\int_{\partial\Omega}(u\cdot\nabla n\cdot u)FdS\nonumber\\
&=-\frac{d}{dt}\int_{\partial\Omega}\eta(t)(u\cdot\nabla n\cdot u)FdS
+\int_{\partial\Omega}\eta(t)Fh\cdot\dot{u}dS+\eta'(t)\int_{\partial\Omega}(u\cdot\nabla n\cdot u)FdS\nonumber\\
&\quad-\int_{\partial\Omega}\eta(t)Fh^i(\nabla u^i\times u^\bot)\cdot ndS\nonumber\\
&=-\frac{d}{dt}\int_{\partial\Omega}\eta(t)(u\cdot\nabla n\cdot u)FdS
+\int_{\partial\Omega}\eta(t)Fh\cdot\dot{u}dS
+\eta'(t)\int_{\partial\Omega}(u\cdot\nabla n\cdot u)FdS\nonumber\\
&\quad-\int\eta(t)\nabla u^i\times u^\bot\cdot\nabla(Fh^i)dx+\int\eta(t)Fh^i\nabla\times u^\bot\cdot\nabla u^idx\nonumber\\
&\le -\frac{d}{dt}\int_{\partial\Omega}\eta(t)(u\cdot\nabla n\cdot u)FdS
+C\eta(t)\|\nabla F\|_{L^2}\|u\|_{L^3}\|\dot{u}\|_{L^6}\nonumber\\
&\quad+C\eta(t)\big(\|F\|_{L^3}\|u\|_{L^6}\|\nabla\dot{u}\|_{L^2}
+\|F\|_{L^3}\|u\|_{L^6}\|\dot{u}\|_{L^2}
+\|F\|_{L^3}\|\nabla u\|_{L^2}\|\dot{u}\|_{L^6}\big)\nonumber\\
&\quad+C\eta(t)\big(\|\nabla u\|_{L^2}\|u\|_{L^6}^2\|\nabla F\|_{L^6}+\|\nabla u\|_{L^4}^2\|u\|_{L^6}\|F\|_{L^3}\big)\nonumber\\
&\quad+|\eta'(t)|\big(\|\nabla u\|_{L^2}\|u\|_{L^6}\|F\|_{L^3}+\|u\|_{L^4}^2\|F\|_{L^2}
+\|\nabla F\|_{L^2}\|u\|_{L^4}^2\big)\nonumber\\
&\le -\frac{d}{dt}\int_{\partial\Omega}\eta(t)(u\cdot\nabla n\cdot u)FdS
+C|\eta'(t)|\|\nabla u\|_{L^2}^2\big(\|\rho\dot{u}\|_{L^2}+\|\nabla d\|_{L^6}\|\nabla^2d\|_{L^3}+\|\nabla u\|_{L^2}+\|P-\bar{P}\|_{L^2}\big)\nonumber\\
&\quad+C\eta(t)\big(\|\rho\dot{u}\|_{L^2}+\|\nabla d\|_{L^6}\|\nabla^2d\|_{L^3}
+\|\nabla u\|_{L^2}+\|P-\bar{P}\|_{L^2}\big)\|\nabla u\|_{L^2}\big(\|\nabla\dot{u}\|_{L^2}
+\|\nabla u\|_{L^2}^2+\|\nabla u\|_{L^4}^2\big)\nonumber\\
&\quad+C\eta(t)\|\nabla u\|_{L^2}^3\|\nabla F\|_{L^6}\nonumber\\
&\le  -\frac{d}{dt}\int_{\partial\Omega}\eta(t)(u\cdot\nabla n\cdot u)FdS
+C|\eta'(t)|\big(\|\sqrt{\rho}\dot{u}\|_{L^2}^2+\|\nabla u\|_{L^2}^2+\|\nabla^3d\|_{L^2}^2+CE_0^2\|\nabla^2d\|_{L^2}^2\big)\nonumber\\
&\quad+C|\eta'(t)|(\|\nabla u\|_{L^2}^4+\|\nabla^2d\|_{L^2}^4+\|\nabla^2d\|_{L^2}^6+E_0)+\delta\eta(t)
\big(\|\nabla\dot{u}\|_{L^2}^2+\|\nabla F\|_{L^6}^2\big)\nonumber\\
&\quad+C\eta(t)\big(\|\sqrt{\rho}\dot{u}\|_{L^2}^2\|\nabla u\|_{L^2}^2
+\|\nabla u\|_{L^2}^4\|\nabla^3d\|_{L^2}^2+\|\nabla u\|_{L^2}^2\|\nabla^3d\|_{L^2}^2+\|\nabla^2d\|_{L^2}^4\|\nabla u\|_{L^2}^2
\big)\nonumber\\
&\quad+C\eta(t)\big(\|\nabla u\|_{L^2}^4+E_0^2\|\nabla u\|_{L^2}^2
+\|\nabla u\|_{L^4}^4+\|\nabla u\|_{L^2}^6\big),
\end{align}
due to
\begin{align*}
&{\rm div}(\nabla u^i\times u^\bot)=u^\bot\cdot\curl\nabla u^i-\nabla u^i\cdot\curl u^\bot=-\nabla u^i\cdot\curl u^\bot,\\
&\|\dot{u}\|_{L^6}\le C\big(\|\nabla\dot{u}\|_{L^2}+\|\nabla u\|_{L^2}^2\big)\ \ (\text{see}\ \eqref{3.2}),
\end{align*}
and
\begin{align}
\|\nabla d\|_{L^6}\|\nabla^2d\|_{L^3}&\le C(\|\nabla d\|_{L^2}+\|\nabla^2d\|_{L^2})(\|\nabla^2d\|_{L^2}^\frac12
\|\nabla^3d\|_{L^2}^\frac12+\|\nabla^2d\|_{L^2})\nonumber\\
&\le CE_0^\frac12\|\nabla^2d\|_{L^2}^\frac12\|\nabla^3d\|_{L^2}^\frac12
+C\|\nabla^2d\|_{L^2}^2+C\|\nabla^2d\|_{L^2}^\frac32\|\nabla^3d\|_{L^2}^\frac12
+CE_0.
\end{align}
Thus, it follows from integration by parts, \eqref{a6}, \eqref{2.5}, \eqref{3.6}, H{\"o}lder's inequality, \eqref{2.3}, Lemma \ref{l28}, and \eqref{z3.22} that
\begin{align}\label{3.45}
J_1&=\int_{\partial\Omega}\eta(t)F_t\dot{u}\cdot ndS-\int\eta(t)F_t{\rm div}\,\dot{u}dx
-\int\eta(t)u\cdot\nabla\dot{u}\cdot\nabla Fdx\nonumber\\
&=-\int_{\partial\Omega}\eta(t)F_t(u\cdot\nabla n\cdot u)dS-(2\mu+\lambda)\int\eta(t)({\rm div}\,\dot{u})^2dx
+(2\mu+\lambda)\int\eta(t){\rm div}\,\dot{u}\nabla u:\nabla udx\nonumber\\
&\quad-\gamma\int\eta(t) P{\rm div}\,\dot{u}{\rm div}\,udx
+\int\eta(t){\rm div}\,\dot{u}u\cdot\nabla Fdx-\int\eta(t)u\cdot\nabla\dot{u}\cdot\nabla Fdx\nonumber\\
&\le -\int_{\partial\Omega}\eta(t)F_t(u\cdot\nabla n\cdot u)dS
-(2\mu+\lambda)\int\eta(t)({\rm div}\,\dot{u})^2dx+\delta\eta(t)\|\nabla\dot{u}\|_{L^2}^2\nonumber\\
&\quad+C(\delta)\eta(t)\big(\|\nabla u\|_{L^2}^2\|\nabla F\|_{L^3}^2
+\|\nabla u\|_{L^4}^4+\|\nabla u\|_{L^2}^2\big)\nonumber\\
&\le -\int_{\partial\Omega}\eta(t)F_t(u\cdot\nabla n\cdot u)dS
-(2\mu+\lambda)\int\eta(t)({\rm div}\,\dot{u})^2dx+\delta\eta(t)\|\nabla\dot{u}\|_{L^2}^2\nonumber\\
&\quad+C\eta(t)\big(\|\nabla u\|_{L^2}^2\|\nabla F\|_{L^2}\|\nabla F\|_{L^6}+\|\nabla u\|_{L^4}^4+\|\nabla u\|_{L^2}^2
\big)\nonumber\\
&\le -\int_{\partial\Omega}\eta(t)F_t(u\cdot\nabla n\cdot u)dS
-(2\mu+\lambda)\int\eta(t)({\rm div}\,\dot{u})^2dx+\delta\eta(t)\big(\|\nabla\dot{u}\|_{L^2}^2+\|\nabla F\|_{L^6}^2\big)\nonumber\\
&\quad+C\eta(t)\big(\|\nabla u\|_{L^2}^4\|\nabla F\|_{L^2}^2+\|\nabla u\|_{L^4}^4+\|\nabla u\|_{L^2}^2
\big)\nonumber\\
&\le -\frac{d}{dt}\int_{\partial\Omega}(u\cdot\nabla n\cdot u)FdS-(2\mu+\lambda)\int\eta(t)({\rm div}\,\dot{u})^2dx+\delta\eta(t)\big(\|\nabla\dot{u}\|_{L^2}^2+\|\nabla F\|_{L^6}^2\big)\nonumber\\
&\quad+C|\eta'(t)|\big(\|\sqrt{\rho}\dot{u}\|_{L^2}^2+\|\nabla u\|_{L^2}^2+\|\nabla^3d\|_{L^2}^2
+CE_0^2\|\nabla^2d\|_{L^2}^2\big)\nonumber\\
&\quad+C|\eta'(t)|\big(\|\nabla u\|_{L^2}^4+\|\nabla^2d\|_{L^2}^4+\|\nabla^2d\|_{L^2}^6+E_0\big)\nonumber\\
&\quad+C\eta(t)\big(\|\sqrt{\rho}\dot{u}\|_{L^2}^2\|\nabla u\|_{L^2}^2
+\|\nabla u\|_{L^2}^4\|\nabla^3d\|_{L^2}^2+\|\nabla u\|_{L^2}^2\|\nabla^3d\|_{L^2}^2
\big)\nonumber\\
&\quad+C\eta(t)\big(\|\nabla u\|_{L^2}^4+E_0^2\|\nabla u\|_{L^2}^2
+\|\nabla u\|_{L^4}^4+\|\nabla u\|_{L^2}^6+\|\nabla^2d\|_{L^2}^4\|\nabla u\|_{L^2}^2\big)\nonumber\\
&\le -\frac{d}{dt}\int_{\partial\Omega}(u\cdot\nabla n\cdot u)FdS-(2\mu+\lambda)\int\eta(t)({\rm div}\,\dot{u})^2dx+\delta\eta(t)\|\nabla\dot{u}\|_{L^2}^2\nonumber\\
&\quad+C|\eta'(t)|\big(\|\sqrt{\rho}\dot{u}\|_{L^2}^2+\|\nabla u\|_{L^2}^2+\|\nabla^3d\|_{L^2}^2
+CE_0^2\|\nabla^2d\|_{L^2}^2\big)\nonumber\\
&\quad+C|\eta'(t)|\big(\|\nabla u\|_{L^2}^4+\|\nabla^2d\|_{L^2}^4+\|\nabla^2d\|_{L^2}^6+E_0\big)\nonumber\\
&\quad+C\eta(t)\big(\|\sqrt{\rho}\dot{u}\|_{L^2}^2\|\nabla u\|_{L^2}^2
+\|\nabla u\|_{L^2}^4\|\nabla^3d\|_{L^2}^2+\|\nabla u\|_{L^2}^2\|\nabla^3d\|_{L^2}^2
\big)\nonumber\\
&\quad+C\eta(t)\big(\|\nabla u\|_{L^2}^4+E_0^2\|\nabla u\|_{L^2}^2
+\|\nabla u\|_{L^4}^4+\|\nabla u\|_{L^2}^6+\|\nabla^2d\|_{L^2}^4\|\nabla u\|_{L^2}^2\big)\nonumber\\
&\quad+C\delta\eta(t)\big(\|\nabla^2d\|_{L^2}\|\nabla^3d\|_{L^2}^3+\|\nabla^2d\|_{L^2}^2\|\nabla^3d\|_{L^2}^2
+\|\nabla^2d\|_{L^2}^4
+\|\nabla^2d\|_{L^2}^3\|\nabla^3d\|_{L^2}\big),
\end{align}
where we have used
\begin{align*}
F_t&=(2\mu+\lambda){\rm div}\,u_t-(P-\bar{P})_t\nonumber\\
&=(2\mu+\lambda){\rm div}\,\dot{u}-(2\mu+\lambda){\rm div}\,(u\cdot\nabla u)+u\cdot\nabla P+\gamma P\divv u
\nonumber\\
&=(2\mu+\lambda){\rm div}\,\dot{u}-(2\mu+\lambda)u\cdot\nabla{\rm div}\,u
-(2\mu+\lambda)\nabla u:\nabla u+u\cdot\nabla P
+\gamma P\divv u\nonumber\\
&=(2\mu+\lambda){\rm div}\,\dot{u}
-(2\mu+\lambda)\nabla u:\nabla u+\gamma P{\rm div}\,u-u\cdot\nabla F,
\end{align*}
and
\begin{align*}
\|\nabla F\|_{L^6}&\le C\big(\|\rho\dot{u}\|_{L^6}+\||\nabla d||\nabla^2d|\|_{L^6}\big)\nonumber\\
&\le C(\hat{\rho})\big(\|\dot{u}\|_{L^6}+\|\nabla d\|_{L^\infty}\|\nabla^2d\|_{L^6}\big)\nonumber\\
&\le C\Big(\|\nabla^2d\|_{L^2}^\frac12\|\nabla^3d\|_{L^2}^\frac12
+\|\nabla^2d\|_{L^2}\Big)\big(\|\nabla^3d\|_{L^2}+\|\nabla^2d\|_{L^2}\big)
+C(\hat{\rho})\big(\|\nabla\dot{u}\|_{L^2}+\|\nabla u\|_{L^2}^2\big)\nonumber\\
&\le C\Big(\|\nabla^2d\|_{L^2}^\frac12\|\nabla^3d\|_{L^2}^\frac32
+\|\nabla^2d\|_{L^2}\|\nabla^3d\|_{L^2}+\|\nabla^2d\|_{L^2}^2\Big)\nonumber\\
&\quad+C(\hat{\rho})\Big(\|\nabla^2d\|_{L^2}^\frac32\|\nabla^3d\|_{L^2}^\frac12
+\|\nabla\dot{u}\|_{L^2}+\|\nabla u\|_{L^2}^2\Big).
\end{align*}

2. By a direct calculation, one obtains that
\begin{align}
J_2&=-\mu\int\eta(t)\dot{u}\cdot(\curl{\rm curl}\,u_t)dx-\mu\int\eta(t)\dot{u}\cdot(\curl{\rm curl}\,u){\rm div}\,udx\nonumber\\
&\quad-\mu\int\eta(t)u^i\dot{u}\cdot\curl(\partial_i{\rm curl}\,u)dx\nonumber\\
&=-\mu\int\eta(t)|{\rm curl}\,\dot{u}|^2dx+\mu\int\eta(t){\rm curl}\,\dot{u}\cdot{\rm curl}\,(u\cdot\nabla u)dx\nonumber\\
&\quad+\mu\int\eta(t)({\rm curl}\,u\times\dot{u})\cdot\nabla{\rm div}\,udx
-\mu\int\eta(t){\rm div}\,u({\rm curl}\,u\cdot{\rm curl}\,\dot{u})dx\nonumber\\
&\quad-\mu\int\eta(t)u^i{\rm div}\,(\partial_i{\rm curl}\,u\times\dot{u})dx
-\mu\int\eta(t)u^i\partial_i{\rm curl}\,u\cdot{\rm curl}\,\dot{u}dx\nonumber\\
&=-\mu\int\eta(t)|{\rm curl}\,\dot{u}|^2dx+\mu\int\eta(t){\rm curl}\,\dot{u}\partial_iu\times\nabla u^idx\nonumber\\
&\quad+\mu\int\eta(t)({\rm curl}\,u\times\dot{u})\cdot\nabla{\rm div}\,udx
-\mu\int\eta(t){\rm div}\,u({\rm curl}\,u\cdot{\rm curl}\,\dot{u})dx\nonumber\\
&\quad-\mu\int\eta(t)u^i{\rm div}\,(\partial_i{\rm curl}\,u\times\dot{u})dx\nonumber\\
&=-\mu\int\eta(t)|{\rm curl}\,\dot{u}|^2dx+\mu\int\eta(t){\rm curl}\,\dot{u}\partial_iu\times\nabla u^idx\nonumber\\
&\quad+\mu\int\eta(t)({\rm curl}\,u\times\dot{u})\cdot\nabla{\rm div}\,udx
-\mu\int\eta(t){\rm div}\,u({\rm curl}\,u\cdot{\rm curl}\,\dot{u})dx\nonumber\\
&\quad-\mu\int\eta(t) u\cdot\nabla{\rm div}\,({\rm curl}\,u\times\dot{u})dx
+\mu\int\eta(t) u^i{\rm div}\,({\rm curl}\,u\times\partial_i\dot{u})dx\nonumber\\
&=-\mu\int\eta(t)|{\rm curl}\,\dot{u}|^2dx+\mu\int\eta(t){\rm curl}\,\dot{u}\nabla_iu\times\nabla u^idx\nonumber\\
&\quad-\mu\int\eta(t){\rm div}\,u({\rm curl}\,u\cdot{\rm curl}\,\dot{u})dx
-\mu\int\eta(t)\nabla u^i\cdot({\rm curl}\,u\times\partial_i\dot{u})dx\nonumber\\
&\le \delta\eta(t)\|\nabla\dot{u}\|_{L^2}^2+C\eta(t)\|\nabla u\|_{L^4}^4-\mu\eta(t)\|{\rm curl}\,\dot{u}\|_{L^2}^2,
\end{align}
due to
\begin{align*}
\curl(\dot{u}{\rm div}\,u)&={\rm div}\,u{\rm curl}\,\dot{u}+\nabla{\rm div}\,u\times\dot{u},\\
{\rm div}\,(\partial_i{\rm curl}\,u\times\dot{u})&=\dot{u}\cdot\curl(\partial_i{\rm curl}\,u)-\partial_i{\rm curl}\,u\cdot\curl\dot{u},\\
\int{\rm curl}\,u\cdot(\nabla{\rm div}\,u\times\dot{u})dx&=-\int({\rm curl}\,u\times\dot{u})\cdot\nabla{\rm div}\,udx,\\
\int{\rm curl}\,\dot{u}\cdot{\rm curl}\,(u\cdot\nabla u)dx&=\int{\rm curl}\,\dot{u}\cdot{\rm curl}\,(u^i\partial_iu)dx \\
& =\int{\rm curl}\,\dot{u}\big(u^i{\rm curl}\,\partial_iu+\partial_iu\cdot\nabla u^i\big)dx\nonumber\\
&=\int u^i\partial_i{\rm curl}\,u\cdot{\rm curl}\,\dot{u}dx+\int{\rm curl}\,\dot{u}\partial_iu\times\nabla u^idx,
\end{align*}
and
\begin{align*}
\int u\cdot\nabla{\rm div}\,({\rm curl}\,u\times\dot{u})dx
&=\int u^i\partial_i{\rm div}\,({\rm curl}\,u\times\dot{u})dx\nonumber\\
&=\int u^i\partial_i(\dot{u}\cdot\curl{\rm curl}\,u-{\rm curl}\,\dot{u}\cdot{\rm curl}\,u)dx\nonumber\\
&=\int u^i(\dot{u}\cdot\partial_i\curl{\rm curl}\,u-{\rm curl}\,\dot{u}\cdot\partial_i{\rm curl}\,u)dx\nonumber\\
&\quad+\int u^i(\partial_i\dot{u}\cdot\curl{\rm curl}\,udx-\partial_i{\rm curl}\,\dot{u}\cdot{\rm curl}\,u)dx\nonumber\\
&=\int u^i{\rm div}\,(\partial_i{\rm curl}\,u\times\dot{u})dx+\int u^i{\rm div}\,({\rm curl}\,u\times\partial_i\dot{u})dx.
\end{align*}
By H\"older's inequality, \eqref{3.2}, Sobolev's inequality, \eqref{2.3}, and \eqref{3.8}, we derive from Lemmas \ref{l24} and \ref{l25} that
\begin{align}
J_3&\le C\eta(t)\|\nabla\dot{u}\|_{L^2}\big(\|\nabla d\|_{L^6}\|\nabla d_t\|_{L^3}
+\|\nabla d\|_{L^6}\|\nabla^2d\|_{L^6}\|u\|_{L^6}\big)\nonumber\\
&\le C\eta(t)\|\nabla\dot{u}\|_{L^2}\|\nabla d\|_{H^1}\big(\|\nabla d_t\|_{L^2}^\frac12\|\nabla^2d_t\|_{L^2}^\frac12+\|\nabla d_t\|_{L^2}\big)\nonumber\\
&\quad+C\eta(t)\|\nabla\dot{u}\|_{L^2}\|\nabla u\|_{L^2}\big(\|\nabla d\|_{L^2}+\|\nabla^2d\|_{L^2}\big)
\big(\|\nabla^2d\|_{L^2}+\|\nabla^3d\|_{L^2}\big)\nonumber\\
&\le \delta\eta(t)\big(\|\nabla\dot{u}\|_{L^2}^2+\|\nabla^2d_t\|_{L^2}^2\big)
+C\eta(t)\|\nabla d\|_{H^1}^2\|\nabla d_t\|_{L^2}^2\nonumber\\
&\quad+C\eta(t)\big(\|\nabla u\|_{L^2}^2\|\nabla d\|_{H^1}^4
+\|\nabla u\|_{L^2}^2\|\nabla d\|_{H^1}^2\|\nabla^3d\|_{L^2}^2\big).
\end{align}
Putting above estimates on $J_1$, $J_2$, and $J_3$ into \eqref{3.44}, we obtain after choosing $\delta$ suitably small that
\begin{align}\label{3.59}
&\frac{d}{dt}\Big(\frac{\eta(t)}{2}\|\sqrt{\rho}\dot{u}\|_{L^2}^2\Big)
+(2\mu+\lambda)\eta(t)\|{\rm div}\,\dot{u}\|_{L^2}^2+\mu\eta(t)\|{\rm curl}\,\dot{u}\|_{L^2}^2\nonumber\\
&\le -\frac{d}{dt}\int_{\partial\Omega}(u\cdot\nabla n\cdot u)FdS+C|\eta'(t)|\big(\|\nabla u\|_{L^2}^4+\|\nabla^2d\|_{L^2}^4+\|\nabla^2d\|_{L^2}^6+E_0\big)\nonumber\\
&\quad+C|\eta'(t)|\big(\|\sqrt{\rho}\dot{u}\|_{L^2}^2+\|\nabla u\|_{L^2}^2+\|\nabla^3d\|_{L^2}^2
+CE_0^2\|\nabla^2d\|_{L^2}^2\big)\nonumber\\
&\quad+C\eta(t)\big(\|\sqrt{\rho}\dot{u}\|_{L^2}^2\|\nabla u\|_{L^2}^2
+\|\nabla u\|_{L^2}^4\|\nabla^3d\|_{L^2}^2+\|\nabla u\|_{L^2}^2\|\nabla^3d\|_{L^2}^2\big)\nonumber\\
&\quad+C\eta(t)\big(\|\nabla u\|_{L^2}^4+E_0^2\|\nabla u\|_{L^2}^2
+\|\nabla u\|_{L^4}^4+\|\nabla u\|_{L^2}^6+\|\nabla^2d\|_{L^2}^4\|\nabla u\|_{L^2}^2\big)\nonumber\\
&\quad+C\delta\eta(t)\big(\|\nabla^2d\|_{L^2}\|\nabla^3d\|_{L^2}^3+\|\nabla^2d\|_{L^2}^2\|\nabla^3d\|_{L^2}^2
+\|\nabla^2d\|_{L^2}^4
+\|\nabla^2d\|_{L^2}^3\|\nabla^3d\|_{L^2}\big)\nonumber\\
&\quad+C\eta(t)\big(\|\nabla u\|_{L^2}^2\|\nabla d\|_{H^1}^4
+\|\nabla u\|_{L^2}^2\|\nabla d\|_{H^1}^2\|\nabla^3d\|_{L^2}^2
+\|\nabla d\|_{H^1}^2\|\nabla d_t\|_{L^2}^2\big).
\end{align}

3. Differentiating \eqref{a3} with respect to $t$ and multiplying the resulting equations by $d_{tt}$,
we obtain from integration by parts, $\frac{\partial d_t}{\partial n}|_{\partial\Omega}=0$, Sobolev's inequality, and H\"older's inequality that \begin{align}\label{t3.45}
&\frac12\frac{d}{dt}\int|\nabla d_t|^2dx+\int|d_{tt}|^2dx\nonumber\\
&=\int\langle\partial_t\big(|\nabla d|^2d-u\cdot\nabla d\big), d_{tt}\rangle dx\nonumber\\
&\le C\int|d_{tt}||u_t||\nabla d|dx+C\int|d_{tt}||u||\nabla d_t|dx+C\int|d_{tt}||d_t||\nabla d|^2dx
+C\int|d_{tt}||\nabla d_t||\nabla d|dx\nonumber\\
&\le C\int|d_{tt}||\dot{u}||\nabla d|dx+C\int|d_{tt}||u||\nabla d_t|dx+C\int|d_{tt}||d_t||\nabla d|^2dx+C\int|d_{tt}||\nabla d_t||\nabla d|dx\nonumber\\
&\quad+C\int|d_{tt}||u||\nabla u||\nabla d|dx\triangleq\sum_{i=1}^5U_i.
\end{align}
By a direct computation, one has
\begin{align*}
U_1&\le \delta\|d_{tt}\|_{L^2}^2+C\|\dot{u}\|_{L^6}^2\|\nabla d\|_{L^3}^2\nonumber\\
&\le \delta\|d_{tt}\|_{L^2}^2+C\big(\|\nabla\dot{u}\|_{L^2}^2+\|\nabla u\|_{L^2}^4\big)
\big(\|\nabla d\|_{L^2}\|\nabla^2d\|_{L^2}+\|\nabla d\|_{L^2}^2\big),\\
&\le \delta\|d_{tt}\|_{L^2}^2+CE_0^\frac12\|\nabla\dot{u}\|_{L^2}^2+C\|\nabla u\|_{L^2}^4,\\
U_2&\le \delta\|d_{tt}\|_{L^2}^2+C\|u\|_{L^6}^2\|\nabla d_t\|_{L^3}^2\nonumber\\
&\le \delta\|d_{tt}\|_{L^2}^2+C\|\nabla u\|_{L^2}^2\big(\|\nabla d_t\|_{L^2}\|\nabla^2d_t\|_{L^2}+\|\nabla d_t\|_{L^2}^2\big)\nonumber\\
&\le \delta\big(\|d_{tt}\|_{L^2}^2+\|\nabla^2d_t\|_{L^2}^2\big)
+C\|\nabla u\|_{L^2}^4\|\nabla d_t\|_{L^2}^2+C\|\nabla u\|_{L^2}^2\|\nabla d_t\|_{L^2}^2,\\
U_3&\le \delta\|d_{tt}\|_{L^2}^2+C\|d_t\|_{L^6}^2\|\nabla d\|_{L^6}^4\nonumber\\
&\le \delta\|d_{tt}\|_{L^2}^2+C\|\nabla d\|_{H^1}^4\big(\|d_t\|_{L^2}^2+\|\nabla d_t\|_{L^2}^2\big),\\
U_4&\le \delta\|d_{tt}\|_{L^2}^2+C\|\nabla d_t\|_{L^3}^2\|\nabla d\|_{L^6}^2\nonumber\\
&\le  \delta\|d_{tt}\|_{L^2}^2+C\big(\|\nabla d_t\|_{L^2}\|\nabla^2d_t\|_{L^2}+\|\nabla d_t\|_{L^2}^2\big)\|\nabla d\|_{H^1}^2\nonumber\\
&\le  \delta\big(\|d_{tt}\|_{L^2}^2+\|\nabla^2d_t\|_{L^2}^2\big)
+C\|\nabla d\|_{H^1}^4\|\nabla d_t\|_{L^2}^2+C\|\nabla d\|_{H^1}^2\|\nabla d_t\|_{L^2}^2,\\
U_5&\le \delta\|d_{tt}\|_{L^2}^2+C\|u\|_{L^6}^2\|\nabla u\|_{L^4}^2\|\nabla d\|_{L^{12}}^2\nonumber\\
&\le  \delta\|d_{tt}\|_{L^2}^2+C\|\nabla u\|_{L^4}^4
+C\|\nabla u\|_{L^2}^4\big(\|\nabla d\|_{H^1}^3\|\nabla^3d\|_{L^2}+\|\nabla d\|_{H^1}^4\big)\nonumber\\
&\le \delta\|d_{tt}\|_{L^2}^2+C\|\nabla u\|_{L^4}^4+C\|\nabla u\|_{L^2}^4+C\|\nabla u\|_{L^2}^4\|\nabla^3d\|_{L^2}^2,
\end{align*}
where we have used \eqref{2.5}, \eqref{3.8}, \eqref{t3.4}, and
\begin{align*}
\|\nabla d\|_{L^{12}}^2&\le C\||\nabla d||\nabla^2d|\|_{L^2}\le C\|\nabla d\|_{L^6}\|\nabla^2d\|_{L^3}\nonumber\\
&\le C\|\nabla d\|_{H^1}
\Big(\|\nabla^2d\|_{L^2}^\frac12\|\nabla^3d\|_{L^2}^\frac12
+\|\nabla^2d\|_{L^2}\Big)\nonumber\\
&\le C\|\nabla d\|_{H^1}^\frac32\|\nabla^3d\|_{L^2}^\frac12+\|\nabla d\|_{H^1}^2,
\end{align*}
due to Lemma \ref{l22}.
Thus, substituting the above estimates on $U_i\ (i=1, 2, \ldots, 5)$ into \eqref{t3.45} shows that
\begin{align}\label{t3.46}
&\frac{d}{dt}\Big(\frac{\eta(t)}{2}\|\nabla d_t\|_{L^2}^2\Big)
+\eta(t)\|d_{tt}\|_{L^2}^2-\frac12\eta'(t)\|\nabla d_t\|_{L^2}^2\nonumber\\
&\le C\delta\eta(t)(\|d_{tt}\|_{L^2}^2+\|\nabla^2d_t\|_{L^2}^2)+CE_0^\frac12\eta(t)\|\nabla\dot{u}\|_{L^2}^2
+C\eta(t)\|\nabla u\|_{L^4}^4+C\eta(t)\|\nabla u\|_{L^2}^4\nonumber\\
&\quad+C\eta(t)\|\nabla u\|_{L^2}^4\|\nabla d_t\|_{L^2}^2+C\eta(t)\|\nabla u\|_{L^2}^2\|\nabla d_t\|_{L^2}^2
+C\eta(t)\|\nabla d\|_{H^1}^4(\|d_t\|_{L^2}^2+\|\nabla d_t\|_{L^2}^2)\nonumber\\
&\quad+C\eta(t)\|\nabla d\|_{H^1}^2\|\nabla d_t\|_{L^2}^2+C\eta(t)\|\nabla u\|_{L^2}^4\|\nabla^3d\|_{L^2}^2.
\end{align}

4. It remains to estimate $\|\nabla^2d_t\|_{L^2}$. In fact, by applying the standard $L^2$-estimate of \eqref{a3}, we obtain from \eqref{2.3} and \eqref{3.8} that
\begin{align}
\|\nabla^2d_t\|_{L^2}^2&\le C(\|\nabla d_t\|_{L^2}^2+\|d_{tt}\|_{L^2}^2+\|\partial_t(u\cdot\nabla d)\|_{L^2}^2
+\|\partial_t(|\nabla d|^2d)\|_{L^2}^2)\nonumber\\
&\le C\|\nabla d_t\|_{L^2}^2+C\|d_{tt}\|_{L^2}^2+C\|\dot{u}\|_{L^6}^2\|\nabla d\|_{L^3}^2+C\|d_t\|_{L^6}^2\|\nabla d\|_{L^6}^4\nonumber\\
&\quad+C\|\nabla d_t\|_{L^3}^2\|\nabla d\|_{L^6}^2+C\|u\|_{L^6}^2\|\nabla u\|_{L^4}^2\|\nabla d\|_{L^{12}}^2\nonumber\\
&\le \frac12\|\nabla^2d_t\|_{L^2}^2+C\|d_{tt}\|_{L^2}^2+CE_0^\frac12\|\nabla\dot{u}\|_{L^2}^2
+C\|\nabla u\|_{L^4}^4+C\|\nabla u\|_{L^2}^4\nonumber\\
&\quad+C\|\nabla u\|_{L^2}^4\|\nabla d_t\|_{L^2}^2+C\|\nabla u\|_{L^2}^2\|\nabla d_t\|_{L^2}^2
+C\|\nabla d\|_{H^1}^4(\|d_t\|_{L^2}^2+\|\nabla d_t\|_{L^2}^2)\nonumber\\
&\quad+C\|\nabla d\|_{H^1}^2\|\nabla d_t\|_{L^2}^2+C\|\nabla u\|_{L^2}^4\|\nabla^3d\|_{L^2}^2.
\end{align}
This together with \eqref{t3.46} gives that
\begin{align}
&\frac{d}{dt}\Big(\frac{\eta(t)}{2}\|\nabla d_t\|_{L^2}^2\Big)
+\eta(t)\|d_{tt}\|_{L^2}^2-\frac12\eta'(t)\|\nabla d_t\|_{L^2}^2\nonumber\\
&\le C\delta\eta(t)\|d_{tt}\|_{L^2}^2+CE_0^\frac12\eta(t)\|\nabla\dot{u}\|_{L^2}^2
+C\eta(t)\|\nabla u\|_{L^4}^4+C\eta(t)\|\nabla u\|_{L^2}^4\nonumber\\
&\quad+C\eta(t)\|\nabla u\|_{L^2}^4\|\nabla d_t\|_{L^2}^2+C\eta(t)\|\nabla u\|_{L^2}^2\|\nabla d_t\|_{L^2}^2
+C\eta(t)\|\nabla d\|_{H^1}^4(\|d_t\|_{L^2}^2+\|\nabla d_t\|_{L^2}^2)\nonumber\\
&\quad+C\eta(t)\|\nabla d\|_{H^1}^2\|\nabla d_t\|_{L^2}^2+C\eta(t)\|\nabla u\|_{L^2}^4\|\nabla^3d\|_{L^2}^2,
\end{align}
which combined with \eqref{3.59} leads to \eqref{t3.36} after choosing $E_0\le \varepsilon_3$ and $\delta$ sufficiently small.
\end{proof}

\begin{lemma}\label{l34}
Let the assumptions of Proposition \eqref{p31} hold. Then there exists a positive constant $\varepsilon_4$ such that
\begin{align}\label{3.65}
A_2(\sigma(T))+\int_0^{\sigma(T)}\big(\|\sqrt{\rho}\dot{u}\|_{L^2}^2
+\|\nabla d_t\|_{L^2}^2\big)dt\le 3K,
\end{align}
provided that $E_0\le \varepsilon_4$.
\end{lemma}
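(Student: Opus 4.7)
The plan is to take $\eta(t)\equiv 1$ in Lemma \ref{l33}, so that every term containing $|\eta'(t)|$ drops out of the right-hand side of \eqref{t3.20}, and then integrate over $[0,\sigma(T)]$. Applying Lemma \ref{l24} to replace $\|\divv u\|_{L^2}^2+\|\curl u\|_{L^2}^2$ by a constant multiple of $\|\nabla u\|_{L^2}^2$, and \eqref{lz1} to bound $\|\Delta d\|_{L^2}^2$ from below by $\tfrac12\|\nabla^2 d\|_{L^2}^2-C\|\nabla d\|_{L^2}^2$, the resulting inequality takes the form
\begin{align*}
&A_2(\sigma(T))+\int_0^{\sigma(T)}\big(\|\sqrt{\rho}\dot u\|_{L^2}^2+\|\nabla d_t\|_{L^2}^2\big)dt\\
&\quad\le C(M_1+M_2)+\Big[\int(P-\bar P)\divv u\,dx+\int M(d):\nabla u\,dx\Big]_0^{\sigma(T)}+C\int_0^{\sigma(T)}\mathcal N(t)dt,
\end{align*}
where $\mathcal N(t)$ collects the nonlinear remainders and the $C(M_1+M_2)$ piece absorbs the contribution of the initial data through \eqref{a10}.

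The two perfect-derivative brackets are handled by Young's inequality: $\|P-\bar P\|_{L^2}^2\le CE_0$ from Lemma \ref{l32} yields $|\int(P-\bar P)\divv u\,dx|\le\tfrac18\|\nabla u\|_{L^2}^2+CE_0$, while the Gagliardo--Nirenberg bound $\|\nabla d\|_{L^4}^2\le C\|\nabla d\|_{L^2}^{1/2}\|\nabla^2 d\|_{L^2}^{3/2}$ combined with $\|\nabla d\|_{L^2}^2\le CE_0$ and the a priori assumption $A_2(\sigma(T))\le 4K$ gives $|\int M(d):\nabla u\,dx|\le\tfrac18\|\nabla u\|_{L^2}^2+CE_0^{1/2}K^{3/2}$. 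The two $\tfrac18\|\nabla u\|_{L^2}^2$ pieces are absorbed into $A_2(\sigma(T))$ on the left. To control $\int_0^{\sigma(T)}\mathcal N(t)dt$ I would combine the pointwise a priori bounds $A_2\le 4K$ and $\rho\le 2\hat\rho$ with the dissipation estimates $\int_0^{\sigma(T)}\|\nabla u\|_{L^2}^2dt\le CE_0$ from \eqref{3.8} and $\int_0^{\sigma(T)}\|\nabla^2 d\|_{L^2}^2dt\le CE_0^{1/2}$ (shown inside the proof of Lemma \ref{l32}); most summands such as $\|\nabla u\|_{L^2}^4$, $\|\nabla d\|_{H^1}^6$, $\|\nabla u\|_{L^2}^4\|\nabla d\|_{H^1}^2$, and $\|\nabla u\|_{L^2}^2\|\nabla d\|_{H^1}^2$ factor as $(L^\infty_t\text{-bounded in }K)\times(\text{small in }E_0)$ and are therefore controlled by $E_0^{1/2}$ times a polynomial in $K$.

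The main obstacle is the cubic integral $\int_0^{\sigma(T)}\|\nabla u\|_{L^3}^3dt$. For this term I would first invoke Lemma \ref{l28} to bound $\|\nabla u\|_{L^6}\le C(\|\sqrt{\rho}\dot u\|_{L^2}+\||\nabla d||\nabla^2 d|\|_{L^2}+\|\nabla u\|_{L^2}+\|P-\bar P\|_{L^2})$, and then exploit $\|\nabla u\|_{L^3}^3\le C\|\nabla u\|_{L^2}^{3/2}\|\nabla u\|_{L^6}^{3/2}$ together with Young's inequality to trade the $L^6$ factor for a small multiple of $\|\sqrt{\rho}\dot u\|_{L^2}^2$ (absorbed on the left) plus lower-order contributions already controlled above. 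Finally, choosing $K$ large enough relative to $M_1+M_2$ (so that $C(M_1+M_2)\le K$) and then taking $\varepsilon_4\le\varepsilon_2$ small enough, depending on $K$ and hence on $M_1,M_2$, so that every error of the form $CE_0+CE_0^{1/2}\cdot\mathrm{poly}(K)$ is at most $K$, the desired estimate $A_2(\sigma(T))+\int_0^{\sigma(T)}(\|\sqrt{\rho}\dot u\|_{L^2}^2+\|\nabla d_t\|_{L^2}^2)dt\le 3K$ follows.
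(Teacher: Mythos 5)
Your proposal is correct and follows essentially the same route as the paper: take $\eta\equiv 1$ in Lemma \ref{l33} so the $|\eta'|$-terms vanish, integrate over $[0,\sigma(T)]$, control the two perfect-derivative brackets by Young's inequality together with $\|P-\bar P\|_{L^2}^2\le CE_0$ and the Gagliardo--Nirenberg bound on $\|\nabla d\|_{L^4}$, bound the remaining nonlinear integrals by pairing the $L^\infty_t$ bounds from \eqref{3.6} with the dissipation integrals of Lemma \ref{l32}, and treat $\int_0^{\sigma(T)}\|\nabla u\|_{L^3}^3dt$ via the interpolation in Lemma \ref{l28} so that the $\|\sqrt{\rho}\dot u\|_{L^2}^2$ contribution can be absorbed on the left (the paper's \eqref{t3.50} does exactly this, additionally converting the resulting $\||\nabla d||\nabla^2 d|\|_{L^2}^2$ piece into $\delta\|\nabla d\|_{H^1}^2\|\nabla d_t\|_{L^2}^2$ plus controlled terms via \eqref{t3.34}, a step your sketch subsumes under ``lower-order contributions''). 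Your direct use of $\|\nabla u\|_{L^3}^3\le C\|\nabla u\|_{L^2}^{3/2}\|\nabla u\|_{L^6}^{3/2}$ is just the $p=3$ instance of the same interpolation, so the two arguments coincide.
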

\begin{proof}[Proof]
In view of Lemma \ref{l28} and \eqref{3.8}, we have
\begin{align}\label{t3.50}
\|\nabla u\|_{L^3}^3&\le C\big(\|\rho\dot{u}\|_{L^2}+\||\nabla d||\nabla^2d|\|_{L^2}\big)^\frac32\big(\|\nabla u\|_{L^2}+\|P-\bar{P}\|_{L^2}\big)^\frac32
+C\big(\|\nabla u\|_{L^2}^3+\|P-\bar{P}\|_{L^3}^3\big)\nonumber\\
&\le \delta\|\rho\dot{u}\|_{L^2}^2+C\|\nabla d\|_{L^6}^2\|\nabla^2d\|_{L^3}^2+C\|\nabla u\|_{L^2}^6+C\|\nabla u\|_{L^2}^4+C\|\nabla u\|_{L^2}^2+CE_0\nonumber\\
&\le  \delta\|\rho\dot{u}\|_{L^2}^2+C\|\nabla d\|_{H^1}^2(\|\nabla^2d\|_{L^2}\|\nabla^3d\|_{L^2}
+\|\nabla^2d\|_{L^2}^2)+C\|\nabla u\|_{L^2}^6\nonumber\\
&\quad+C\|\nabla u\|_{L^2}^4+C\|\nabla u\|_{L^2}^2+CE_0\nonumber\\
&\le \delta\|\rho\dot{u}\|_{L^2}^2+\delta\|\nabla d\|_{H^1}^2\|\nabla^3d\|_{L^2}^2+C\|\nabla d\|_{H^1}^4
+C\|\nabla u\|_{L^2}^6\nonumber\\
&\quad+C\|\nabla u\|_{L^2}^4+C\|\nabla u\|_{L^2}^2+CE_0\nonumber\\
&\le \delta\|\rho\dot{u}\|_{L^2}^2+\delta\|\nabla d\|_{H^1}^2\|\nabla d_t\|_{L^2}^2+C\|\nabla d\|_{H^1}^4+C\|\nabla d\|_{H^1}^6
\nonumber\\
&\quad+C\|\nabla u\|_{L^2}^6+C\|\nabla u\|_{L^2}^4+C\|\nabla u\|_{L^2}^2+CE_0,
\end{align}
due to
\begin{align*}
\|P-\bar{P}\|_{L^3}\le C\|P-\bar{P}\|_{L^\infty}^\frac13\Big(\int|P-\bar{P}|^2dx\Big)^\frac13\le C(\hat{\rho})\|\rho-\bar{\rho}\|_{L^2}^\frac23.
\end{align*}
By \eqref{t3.50}, \eqref{3.8}, \eqref{t3.4}, and Lemma \ref{l33}, one gets that
\begin{align}\label{3.68}
\int_0^{\sigma(T)}\|\nabla u\|_{L^3}^3dt
&\le \delta C(\hat{\rho})\int_0^{\sigma(T)}\|\sqrt{\rho}\dot{u}\|_{L^2}^2dt+C\delta\int_0^{\sigma(T)}\|\nabla d_t\|_{L^2}^2\nonumber\\
& \quad +C\int_0^{\sigma(T)}\big(\|\nabla d\|_{H^1}^4+\|\nabla u\|_{L^2}^4+\|\nabla d\|_{H^1}^6\big)dt+C\int_0^{\sigma(T)}\|\nabla u\|_{L^2}^6dt+CE_0.
\end{align}
Taking $\eta(t)=1$ and integrating \eqref{t3.20} over $[0, t]$ for $0<t\le \sigma(T)$, we deduce from \eqref{3.5},
\eqref{3.6}, and \eqref{3.68} that
\begin{align}\label{t3.52}
&A_2(t)+\int_0^t\big(\|\sqrt{\rho}\dot{u}\|_{L^2}^2+\|\nabla d_t\|_{L^2}^2\big)d\tau\nonumber\\
&\le \int(P-\bar{P}){\rm div}\,udx\Big|_{\tau=0}^t
-\int M(d):\nabla udx\Big|_{\tau=0}^t+C\int_0^t\|\nabla u\|_{L^3}^3d\tau\nonumber\\
&\quad+C\int_0^t\big(\|\nabla d\|_{H^1}^2+\|\nabla d\|_{H^1}^4+\|\nabla u\|_{L^2}^4+\|\nabla d\|_{H^1}^6
+\|\nabla u\|_{L^2}^6\big)d\tau+CE_0\nonumber\\
&\le K+\frac14A_2(t)+CE_0A_2(t)+CE_0A_2^2(t)+CE_0,
\end{align}
due to
\begin{align*}
\|\nabla^2d\|_{L^2}^2\le C\|\Delta d\|_{L^2}^2+C\|\nabla d\|_{L^2}^2.
\end{align*}
We infer from \eqref{t3.52} that
\begin{align}\label{3.70}
&A_2(\sigma(T))+\int_0^{\sigma(T)}\big(\|\sqrt{\rho}\dot{u}\|_{L^2}^2+\|\nabla d_t\|_{L^2}^2\big)dt\nonumber\\
&\le 2K+CE_0A_2^2(t)+CE_0\le \frac52K+CE_0KA_2(\sigma(T)),
\end{align}
provided that $E_0\le\varepsilon_4$ is suitably small. We immediately obtain \eqref{3.65} from \eqref{3.70}.
\end{proof}

\begin{lemma}\label{l35}
Let the assumptions of Proposition \ref{p31} be satisfied. For $\sigma_i\triangleq\sigma(t+1-i)$ with $i$ being an integer satisfying $1\le i\le [T]-1$, then there exists a positive constant $\varepsilon_5$ such that
\begin{align}\label{z3.44}
A_1(T)+\int_{i-1}^{i+1}\sigma_i\big(\|\sqrt{\rho}\dot{u}\|_{L^2}^2+\|\nabla d_t\|_{L^2}^2\big)dt\le E_0^\frac12,
\end{align}
provided that $E_0\le \varepsilon_5$.
\end{lemma}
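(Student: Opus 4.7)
The plan is to apply the differential inequality \eqref{t3.20} of Lemma \ref{l33} with two different piecewise-smooth time weights, in the spirit of the Yu--Zhao piecewise method, in order to bootstrap the a priori bound $A_1(T)\le 2E_0^{1/2}$ from \eqref{3.6} to the sharper $A_1(T)\le E_0^{1/2}$ while simultaneously producing the weighted $L^2_t$ bound on $\|\sqrt{\rho}\dot u\|_{L^2}^2+\|\nabla d_t\|_{L^2}^2$. Note that by Lemma \ref{l24} the quantity $\|\divv u\|_{L^2}^2+\|\curl u\|_{L^2}^2$ on the LHS of \eqref{t3.20} controls $\|\nabla u\|_{L^2}^2$, which together with the $\eta\|\Delta d\|_{L^2}^2$ term exactly furnishes $\eta(t)(\|\nabla u\|_{L^2}^2+\|\Delta d\|_{L^2}^2)$.

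On $[0,\sigma(T)]$ I take $\eta(t)=\sigma(t)$ and integrate \eqref{t3.20} from $0$ to $t$; since $\sigma(0)=0$, only the terminal boundary terms $\sigma(t)\int(P-\bar P)\divv u\,dx$ and $\sigma(t)\int M(d):\nabla u\,dx$ survive, and they are absorbed by Cauchy--Schwarz, the interpolation $\|\nabla d\|_{L^4}^2\le C\|\nabla d\|_{L^2}^{1/2}\|\nabla^2 d\|_{L^2}^{3/2}+C\|\nabla d\|_{L^2}^2$, the bound $\|P-\bar P\|_{L^2}^2\le CE_0$ from \eqref{t3.13}, and $A_2(\sigma(T))\le 4K$. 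The RHS term $\sigma\|\nabla u\|_{L^3}^3$ is controlled by the pointwise estimate \eqref{t3.50}; its $\|\sqrt{\rho}\dot u\|_{L^2}^2$ and $\|\nabla d\|_{H^1}^2\|\nabla d_t\|_{L^2}^2$ contributions are absorbed into the dissipation on the LHS through a small parameter $\delta$ together with the uniform control $\sigma(t)\|\nabla d\|_{H^1}^2\le CE_0^{1/2}$, which follows from $A_1(T)\le 2E_0^{1/2}$, the bound $\|\nabla^2 d\|_{L^2}^2\le C(\|\Delta d\|_{L^2}^2+\|\nabla d\|_{L^2}^2)$, and \eqref{3.8}. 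The remaining higher-order factors $\|\nabla u\|_{L^2}^{2k}$ and $\|\nabla d\|_{H^1}^{2k}$ for $k\ge 2$ are $L^1_t$-integrable of size $O(E_0^{1/2})$ thanks to \eqref{3.8}, the conclusion \eqref{3.65} of Lemma \ref{l34}, and $A_1(T)\le 2E_0^{1/2}$.

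For each integer $1\le i\le [T]-1$ I then take $\eta(t)=\sigma_i(t)$ and integrate \eqref{t3.20} from $i-1$ to $t\in[i-1,i+1]$. Because $\sigma_i(i-1)=0$, the initial boundary contributions vanish. The key structural observation is that $\sigma_i(t)\|\nabla d\|_{H^1}^2\le CE_0^{1/2}$ uniformly on this interval: either $t\ge 1$, where $A_1(T)\le 2E_0^{1/2}$ yields $\|\nabla d\|_{H^1}^2\le CE_0^{1/2}$ directly, or $i=1$ and $t\le 1$, where $\sigma_1=\sigma$ reduces to the situation of the first step. Consequently the $\sigma_i\|\nabla u\|_{L^3}^3$ term is handled as before, and the $\sigma_i\|\nabla u\|_{L^2}^{2k}$ and $\sigma_i\|\nabla d\|_{H^1}^{2k}$ pieces are $L^1_t$-integrable of order $E_0^{1/2}$ by \eqref{3.8} and \eqref{3.9}; the $|\sigma_i'|$ contributions (supported on $[i-1,i]$) are likewise $O(E_0^{1/2})$. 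Choosing $\varepsilon_5$ small enough to absorb the resulting universal constant $CE_0^{1/2}\le E_0^{1/2}$ then yields \eqref{z3.44}. The main obstacle I expect is the delicate use of the elliptic estimate \eqref{t3.34} to convert $\|\nabla^3 d\|_{L^2}^2$ into $\|\nabla d_t\|_{L^2}^2$ plus lower-order terms, so that the cross terms mixing $\|\nabla d\|_{H^1}$, $\|\nabla d_t\|_{L^2}$ and $\|\nabla u\|_{L^2}$ can be fully absorbed into the $\tfrac12\eta\|\nabla d_t\|_{L^2}^2$ dissipation via the smallness $\sigma_i\|\nabla d\|_{H^1}^2\le CE_0^{1/2}$ derived above.
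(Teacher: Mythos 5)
Your overall route is the same as the paper's: integrate the differential inequality \eqref{t3.20} of Lemma \ref{l33} with the weight $\eta=\sigma_i$ over $(i-1,i+1]$ (the case $i=1$, where $\sigma_1=\sigma$, covers the initial layer), use $\sigma_i(i-1)=0$ to kill the initial boundary terms, absorb the terminal terms $\eta\int(P-\bar P)\divv u\,dx$ and $\eta\int M(d):\nabla u\,dx$ by interpolation, control $\|\nabla u\|_{L^3}^3$ through \eqref{t3.50}, convert $\|\nabla^3 d\|_{L^2}$ via \eqref{t3.34}, and feed in \eqref{3.6}, \eqref{3.8}, \eqref{3.9} and \eqref{3.65}. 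All of that matches Lemma \ref{l35}'s proof.

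However, there is a genuine gap in how you close the bootstrap. You assert that the surviving right-hand side is of size $O(E_0^{1/2})$ and then propose to conclude by ``choosing $\varepsilon_5$ small enough to absorb the resulting universal constant $CE_0^{1/2}\le E_0^{1/2}$.'' That inequality is false for any $C>1$ no matter how small $E_0$ is, so a bound of order $E_0^{1/2}$ with an unspecified constant cannot produce the conclusion $A_1(T)\le E_0^{1/2}$ (with constant exactly $1$). The whole point of the piecewise step is to beat the a priori bound $A_1(T)\le 2E_0^{1/2}$, which forces you to land on a \emph{strictly higher} power of $E_0$; the paper gets $C_3E_0$, and then $C_3E_0\le E_0^{1/2}$ holds once $E_0\le C_3^{-2}$. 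The sharper power comes from the factorization you partially describe but do not exploit in the bookkeeping: for each quartic or sextic term one factor must be estimated in $\sup_t$ \emph{with the weight $\sigma_i$ attached}, e.g.
\begin{align*}
\int_{i-1}^{i+1}\sigma_i\|\nabla d\|_{H^1}^4\,dt\le \sup_{i-1\le t\le i+1}\big(\sigma_i\|\nabla d\|_{H^1}^2\big)\int_{i-1}^{i+1}\|\nabla d\|_{H^1}^2\,dt\le C\big(A_1(T)+E_0\big)\cdot CE_0^{\frac12}\le CE_0,
\end{align*}
and similarly $\int\sigma_i\|\nabla u\|_{L^2}^4\,dt\le A_1(T)\int\|\nabla u\|_{L^2}^2\,dt\le CE_0^{3/2}$, while the $|\sigma_i'|$ contributions are $O(E_0)$ by \eqref{3.8} and \eqref{t3.13}. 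If instead you bound the sup factor only by \eqref{t3.4}/\eqref{t3.15} (a constant of order one, or of order $K$ on $[0,\sigma(T)]$) and the time integral by \eqref{3.9}, you indeed get only $O(E_0^{1/2})$, and the argument does not close. Redo the power counting so that every term is $O(E_0)$ (after absorbing the $\delta$-pieces and the $CE_0^{1/2}\sup(\sigma_i\|\Delta d\|_{L^2}^2)$ piece into the left-hand side); then the choice of $\varepsilon_5$ is legitimate.
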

\begin{remark}
For simplicity, we only prove the case $T>2$. Otherwise, the same thing can be done by choosing a suitably small step size.
\end{remark}
\begin{proof}[Proof]
For integer $i\ (1\le i\le [T]-1)$, taking $\eta(t)=\sigma_i$ and integrating \eqref{t3.20} over $(i-1, i+1]$, we derive
\begin{align}\label{3.72}
&\sup_{i-1\le t\le i+1}\big[\sigma_i\big(\|\nabla u\|_{L^2}^2
+\|\Delta d\|_{L^2}^2\big)\big]+\int_{i-1}^{i+1}\sigma_i\big(\|\sqrt{\rho}\dot{u}\|_{L^2}^2+\|\nabla d_t\|_{L^2}^2\big)dt\nonumber\\
&\le \sigma_i\int(P-\bar{P}){\rm div}\,udx+\sigma_i\int M(d):\nabla udx+C\int_{i-1}^{i+1}\big(\sigma_i+\sigma_i'\big)\|\nabla u\|_{L^2}^2dt\nonumber\\
&\quad+C\int_{i-1}^{i+1}\sigma_i'\big(\|\nabla^2 d\|_{L^2}^4+\|\nabla u\|_{L^2}^4+E_0^2\big)dt
+C\int_{i-1}^{i+1}\sigma_i\|\nabla u\|_{L^3}^3dt\nonumber\\
&\quad+C\int_{i-1}^{i+1}\sigma_i\big(\|\nabla u\|_{L^2}^4+\|\nabla^2d\|_{L^2}^4+\|\nabla u\|_{L^2}^6+\|\nabla^2d\|_{L^2}^6\big)dt
+CE_0\nonumber\\
&\le \frac12\sup_{i-1\le t\le i+1}\big(\sigma_i\|\nabla u\|_{L^2}^2\big)+C\sup_{i-1\le t\le i+1}\big(\sigma_i\|\nabla d\|_{L^4}^4\big)
+C\int_{i-1}^{i+1}\|\nabla u\|_{L^2}^2dt\nonumber\\
&\quad+C\sup_{i-1\le t\le i+1}\big(\|\nabla u\|_{L^2}^2
+\|\Delta d\|_{L^2}^2+\|\nabla d\|_{L^2}^2\big)\int_{i-1}^{i+1}\big(\|\nabla u\|_{L^2}^2+\|\nabla^2d\|_{L^2}^2\big)dt\nonumber\\
&\quad+C\sup_{i-1\le t\le i+1}\big(\|\nabla u\|_{L^2}^4
+\|\Delta d\|_{L^2}^4+\|\nabla d\|_{L^2}^4\big)\int_{i-1}^{i+1}\big(\|\nabla u\|_{L^2}^2+\|\nabla^2d\|_{L^2}^2\big)dt\nonumber\\
&\quad+C\sup_{i-1\le t\le i}\|\nabla u\|_{L^2}^2\int_{i-1}^i\|\nabla u\|_{L^2}^2dt+CE_0+\delta\int_{i-1}^{i+1}\sigma_i\big(\|\sqrt{\rho}\dot{u}\|_{L^2}^2
+\|\nabla d_t\|_{L^2}^2\big)dt\nonumber\\
&\le  \frac12\sup_{i-1\le t\le i+1}\big(\sigma_i\|\nabla u\|_{L^2}^2\big)+CE_0^\frac12\sup_{i-1\le t\le i+1}\big(\sigma_i\|\Delta d\|_{L^2}^2\big)\nonumber\\
&\quad+CE_0\big(A_1(T)+A_2(\sigma(T)+E_0)\big)+CE_0\big(A_1^2(T)+A_2^2(\sigma(T))+E_0^2\big)\nonumber\\
&\quad+\delta\int_{i-1}^{i+1}\sigma_i\big(\|\sqrt{\rho}\dot{u}\|_{L^2}^2
+\|\nabla d_t\|_{L^2}^2\big)dt+CE_0\nonumber\\
&\le  \frac12\sup_{i-1\le t\le i+1}\big(\sigma_i\|\nabla u\|_{L^2}^2\big)+CE_0^\frac12\sup_{i-1\le t\le i+1}\big(\sigma_i\|\Delta d\|_{L^2}^2\big)\nonumber\\
&\quad+\delta\int_{i-1}^{i+1}\sigma_i\big(\|\sqrt{\rho}\dot{u}\|_{L^2}^2
+\|\nabla d_t\|_{L^2}^2\big)dt+CE_0,
\end{align}
where we have used \eqref{3.6}, \eqref{3.8}, and \eqref{t3.15}. Choosing $\delta$ and $E_0$ suitably small, we deduce from \eqref{3.72} that
\begin{align}\label{3.73}
\sup_{0\le t\le \sigma(T)}\big[\sigma\big(\|\nabla u\|_{L^2}^2
+\|\Delta d\|_{L^2}^2\big)\big]\le CE_0\le E_0^\frac12,
\end{align}
due to $\sigma_1(t)=\sigma(t)$ and
\begin{align}\label{3.74}
\sup_{i\le t\le i+1}\big[\sigma_i\big(\|\nabla u\|_{L^2}^2
+\|\Delta d\|_{L^2}^2\big)\big]
+\int_{i-1}^{i+1}\sigma_i\big(\|\sqrt{\rho}\dot{u}\|_{L^2}^2
+\|\nabla d_t\|_{L^2}^2\big)dt\le C_3E_0\le E_0^\frac12,
\end{align}
provided that $E_0\le \varepsilon_5$ is properly small. Note that the constant $C_3$ is independent of $i$.
Thus, the desired \eqref{z3.44} follows from \eqref{3.73} and \eqref{3.74}.
\end{proof}

\begin{lemma}\label{l36}
Let the assumptions of Proposition \ref{p31} be satisfied. Then there exists a positive constant $\varepsilon_6$ such that, for $\sigma(T)\le t_1<t_2\le T$,
\begin{align}\label{3.75}
\sup_{0\le t\le T}\big[\sigma^2\big(\|\sqrt{\rho}\dot{u}\|_{L^2}^2+\|\nabla d_t\|_{L^2}^2\big)\big]\le CE_0^\frac12,
\end{align}
and
\begin{align}\label{3.76}
\int_{t_1}^{t_2}\sigma^2\big(\|\nabla\dot{u}\|_{L^2}^2+\|d_{tt}\|_{L^2}^2\big)dt
\le CE_0^\frac12+CE_0(t_2-t_1),
\end{align}
provided that $E_0\le \varepsilon_6$.
\end{lemma}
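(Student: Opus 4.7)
\medskip

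\noindent\textbf{Proof plan for Lemma \ref{l36}.}
The plan is to feed the weight $\eta(t)=\sigma^2$ on $[0,\sigma(T)]$ and the piecewise weights $\eta(t)=\sigma_i^2$ on each window $(i-1,i+1]$ for integers $1\le i\le[T]-1$ into the differential inequality \eqref{t3.36} of Lemma \ref{zl33}, then integrate in time and absorb the right-hand side using the already-established bounds from Lemmas \ref{l32}, \ref{l34} and especially \ref{l35} (which gives $A_1(T)\le E_0^{1/2}$ and $\int_{i-1}^{i+1}\sigma_i(\|\sqrt{\rho}\dot u\|_{L^2}^2+\|\nabla d_t\|_{L^2}^2)\,dt\le E_0^{1/2}$). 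Since $|(\sigma^2)'|\le 2\sigma$ and $|(\sigma_i^2)'|\le 2\sigma_i$, the terms with $|\eta'(t)|$ on the right of \eqref{t3.36} are dominated by the integrals already controlled in \eqref{z3.44}, and most $\eta(t)$-terms carry either a factor $\|\nabla u\|_{L^2}^2$ or $\|\nabla d\|_{H^1}^2$ which, by \eqref{t3.4} and Lemma \ref{l35}, is $O(E_0^{1/2})$ and hence absorbed into the dissipation for $E_0$ small.

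The first real obstacle is the boundary term $-\frac{d}{dt}\int_{\partial\Omega}\eta(t)(u\cdot\nabla n\cdot u)F\,dS$. After integration in time this becomes the pointwise quantity $\eta(t)\int_{\partial\Omega}(u\cdot\nabla n\cdot u)F\,dS$, which I bound using the trace theorem (Lemma \ref{l23}), \eqref{2.3}, and the $F$ estimates of Lemma \ref{l28}:
\begin{align*}
\Big|\int_{\partial\Omega}(u\cdot\nabla n\cdot u)F\,dS\Big|
\le C\|u\|_{H^1}^2\|F\|_{H^1}
\le \tfrac14\|\sqrt{\rho}\dot u\|_{L^2}^2+\delta\|\nabla^3 d\|_{L^2}^2+CE_0^{1/2},
\end{align*}
where the $\|\nabla^3 d\|_{L^2}^2$ piece is controlled by $\|\nabla d_t\|_{L^2}^2$ plus lower-order terms via \eqref{t3.34}. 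Thus the boundary contribution at the upper limit is absorbed by the left-hand side of \eqref{t3.36} (multiplied by the weight $\sigma^2$ or $\sigma_i^2$), while at the lower limit $t=i-1$ it is multiplied by $\sigma_i^2(i-1)=0$ (and similarly $\sigma(0)^2=0$).

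Next, I integrate the weighted \eqref{t3.36} on $[0,\sigma(T)]$ with $\eta=\sigma^2$ to obtain
\begin{align*}
\sup_{0\le t\le\sigma(T)}\!\big[\sigma^2(\|\sqrt\rho\dot u\|_{L^2}^2+\|\nabla d_t\|_{L^2}^2)\big]+\!\int_0^{\sigma(T)}\!\sigma^2\big(\|\divv\dot u\|_{L^2}^2+\|\curl\dot u\|_{L^2}^2+\|d_{tt}\|_{L^2}^2\big)dt\le CE_0^{1/2},
\end{align*}
and then the same argument on $(i-1,i+1]$ with $\eta=\sigma_i^2$ yields the corresponding estimate on each such window; stitching these windows together produces the uniform bound \eqref{3.75}. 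The main bookkeeping obstacle here is checking that the triple-product terms on the right of \eqref{t3.36} (for instance $\|\nabla u\|_{L^2}^4\|\nabla^3 d\|_{L^2}^2$, $\|\nabla d\|_{H^1}^2\|\nabla d_t\|_{L^2}^2$, $\|\nabla u\|_{L^4}^4$) all integrate against $\sigma^2$ (resp.\ $\sigma_i^2$) to $O(E_0^{1/2})$; each of them has at least one factor that is pointwise $O(E_0^{1/2})$ by \eqref{3.7}--\eqref{t3.4}, and the remaining factor is time-integrable either by \eqref{3.65}--\eqref{z3.44} or by Lemma \ref{l28} combined with \eqref{3.68}.

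Finally, to deduce \eqref{3.76} I integrate \eqref{t3.36} on $[t_1,t_2]$ with $\eta=\sigma^2$, use the already-established \eqref{3.75} to handle the boundary contributions at $t_1$ and $t_2$, and convert the coercive quantity $\sigma^2(\|\divv\dot u\|_{L^2}^2+\|\curl\dot u\|_{L^2}^2)$ into $\sigma^2\|\nabla\dot u\|_{L^2}^2$ via the estimate $\|\nabla\dot u\|_{L^2}\le C(\|\divv\dot u\|_{L^2}+\|\curl\dot u\|_{L^2}+\|\nabla u\|_{L^4}^2)$ from the lemma after \eqref{3.2}; the extra contribution $\int_{t_1}^{t_2}\sigma^2\|\nabla u\|_{L^4}^4\,dt$ is $O(E_0)(t_2-t_1)$ by \eqref{3.7}, \eqref{t3.4} and Lemma \ref{l28}. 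All remaining time integrals on the right of \eqref{t3.36} are split as $\int_{t_1}^{t_2}(\cdots)dt\le CE_0^{1/2}+CE_0(t_2-t_1)$, giving \eqref{3.76}. The most delicate point throughout is the simultaneous treatment of the supercritical nonlinearity $|\nabla d|^2 d$ (which forces the weighted control of $\|\nabla^3 d\|_{L^2}$ via \eqref{t3.34}) and the boundary term involving $F$; once these are handled as above the remaining estimates are routine, and smallness of $E_0\le\varepsilon_6$ closes the argument.
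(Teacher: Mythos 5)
Your overall strategy coincides with the paper's: take $\eta=\sigma_i^2$ (resp.\ $\eta=\sigma^2$) in \eqref{t3.36}, integrate over the unit windows $(i-1,i+1]$ (resp.\ over $[0,\sigma(T)]$ and $[t_1,t_2]$), control the boundary contribution $\int_{\partial\Omega}(u\cdot\nabla n\cdot u)F\,dS$ at the endpoints by the trace theorem together with $\|F\|_{H^1}$ from Lemma \ref{l28} and absorb it into the left-hand side, and close with the bounds of Lemmas \ref{l32}--\ref{l35} and the smallness of $E_0$. The passage from $\|\divv\dot u\|_{L^2}^2+\|\curl\dot u\|_{L^2}^2$ to $\|\nabla\dot u\|_{L^2}^2$ and the stitching of the windows for \eqref{3.75} are also exactly as in the paper.

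There is, however, one step in your argument for \eqref{3.76} that does not deliver the stated bound. The right-hand side of \eqref{t3.36} contains cubic terms of the type $\sigma^2\big(\|\sqrt{\rho}\dot u\|_{L^2}^3+\|\nabla d_t\|_{L^2}^3\big)$ (and products such as $\sigma^2\|\nabla d\|_{H^1}^2\|\nabla d_t\|_{L^2}^2$), which one bounds by $\sup\big[\sigma(\|\sqrt{\rho}\dot u\|_{L^2}+\|\nabla d_t\|_{L^2})\big]\cdot\int_{t_1}^{t_2}\sigma\big(\|\sqrt{\rho}\dot u\|_{L^2}^2+\|\nabla d_t\|_{L^2}^2\big)dt$. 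For this you need the intermediate estimate
\begin{align*}
\int_{t_1}^{t_2}\sigma\big(\|\sqrt{\rho}\dot{u}\|_{L^2}^2+\|\nabla d_t\|_{L^2}^2\big)dt
\le CE_0^\frac12+CE_0(t_2-t_1),
\end{align*}
which the paper obtains separately (its \eqref{3.83}) by integrating the \emph{first-level} inequality \eqref{t3.20} with $\eta=\sigma$ over $[t_1,t_2]$. You instead appeal to "time-integrability by \eqref{3.65}--\eqref{z3.44}"; but \eqref{z3.44} is a per-window bound of size $E_0^{1/2}$, and summing it over the $\lesssim 1+t_2-t_1$ windows covering $[t_1,t_2]$ only yields $CE_0^{1/2}(1+t_2-t_1)$, i.e.\ a coefficient $E_0^{1/2}$ rather than $E_0$ in front of $(t_2-t_1)$. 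This falls short of the claimed \eqref{3.76} (though it would still suffice for the Zlotnik argument in Lemma \ref{l38}). The gap is easily repaired by inserting the analogue of \eqref{3.83} before the final integration; everything else in your plan is sound.
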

\begin{proof}[Proof]
1. We get from Lemma \ref{l28} that
\begin{align*}
\|\nabla u\|_{L^4}^4\le C\big(\|\rho\dot{u}\|_{L^2}+\||\nabla d||\nabla^2d|\|_{L^2}\big)^3\big(\|\nabla u\|_{L^2}+\|P-\bar{P}\|_{L^2}\big)+C\big(\|\nabla u\|_{L^2}^4+\|P-\bar{P}\|_{L^4}^4\big),
\end{align*}
which implies that
\begin{align}
\int_{i-1}^{i+1}\sigma_i^2\|\nabla u\|_{L^4}^4dt
&\le C\int_{i-1}^{i+1}\sigma_i^2\Big(\|\nabla u\|_{L^2}+E_0^\frac12\Big)\big(\|\sqrt{\rho}\dot{u}\|_{L^2}^3
+\|\nabla d\|_{L^3}^3\|\nabla^2d\|_{L^6}^3\big)dt\nonumber\\
&\quad+C\int_{i-1}^{i+1}\sigma_i^2\big(\|\nabla u\|_{L^2}^4\big)dt+CE_0\nonumber\\
&\le C\sup_{i-1\le t\le i+1}\Big(\|\nabla u\|_{L^2}+E_0^\frac12\Big)\int_{i-1}^{i+1}\sigma_i^2\big(
\|\sqrt{\rho}\dot{u}\|_{L^2}^3
+\|\nabla^3d\|_{L^2}^3\big)dt\nonumber\\
&\quad+C\int_{i-1}^{i+1}\sigma_i^2(\|\nabla u\|_{L^2}^4+\|\nabla d\|_{H^1}^4)dt+CE_0\nonumber\\
&\le C\sup_{i-1\le t\le i+1}\Big(\|\nabla u\|_{L^2}+E_0^\frac12\Big)\int_{i-1}^{i+1}\sigma_i^2\big(
\|\sqrt{\rho}\dot{u}\|_{L^2}^3
+\|\nabla d_t\|_{L^2}^3\big)dt\nonumber\\
&\quad+C\int_{i-1}^{i+1}\sigma_i^2\big(\|\nabla u\|_{L^2}^4+\|\nabla^2d\|_{L^2}^4\big)dt+CE_0,
\end{align}
\begin{align}
\int_0^T\|\nabla u\|_{L^2}^4dt&=\int_0^{\sigma(T)}\|\nabla u\|_{L^2}^4dt+\int_{\sigma(T)}^T\|\nabla u\|_{L^2}^4dt\nonumber\\
&\le \sup_{0\le t\le \sigma(T)}\|\nabla u\|_{L^2}^2\int_0^{\sigma(T)}\|\nabla u\|_{L^2}^2dt
+\sup_{\sigma(T)\le t\le T}\big(\sigma\|\nabla u\|_{L^2}^2\big)\int_{\sigma(T)}^T\|\nabla u\|_{L^2}^2dt\nonumber\\
&\le C(K)E_0,
\end{align}
and
\begin{align}
\|P-\bar{P}\|_{L^4}\le C\|P-\bar{P}\|_{L^\infty}^\frac12\Big(\int|P-\bar{P}|^2dx\Big)^\frac14\le C(\hat{\rho})\|\rho-\bar{\rho}\|_{L^2}^\frac12.
\end{align}
For any integer $1\le i\le [T]-1$, integrating \eqref{t3.36} with $\eta(t)=\sigma_i^2$ over $(i-1, i+1]$,
we deduce from \eqref{3.6}, \eqref{3.9}, \eqref{3.65}, and Young's inequality that
\begin{align}\label{3.78}
&\sup_{i-1\le t\le i+1}\big[\sigma_i^2\big(\|\sqrt{\rho}\dot{u}\|_{L^2}^2
+\|\nabla d_t\|_{L^2}^2\big)\big]+\int_{i-1}^{i+1}\sigma_i^2\big(\|\nabla\dot{u}\|_{L^2}^2
+\|d_{tt}\|_{L^2}^2\big)dt\nonumber\\
&\le -\int_{\partial\Omega}\sigma_i^2(u\cdot\nabla n\cdot u)FdS\Big|_{i-1}^{i+1}
+C\int_{i-1}^{i+1}\sigma_i^2\|\nabla u\|_{L^4}^4dt\nonumber\\
&\quad+C\int_{i-1}^{i+1}\sigma_i\sigma_i'\big(\|\sqrt{\rho}\dot{u}\|_{L^2}^2
+\|\nabla d_t\|_{L^2}^2+\|\nabla u\|_{L^2}^2
+\|\nabla^2d\|_{L^2}^2+\|\nabla u\|_{L^2}^4+\|\nabla d\|_{H^1}^4\big)dt\nonumber\\
&\quad+C\int_{i-1}^{i+1}\sigma_i^2\big(\|\sqrt{\rho}\dot{u}\|_{L^2}^2\|\nabla u\|_{L^2}^2
+\|\nabla u\|_{L^2}^4\|\nabla d_t\|_{L^2}^2+\|\nabla u\|_{L^2}^6+\|\nabla d\|_{H^1}^6\big)dt\nonumber\\
&\quad+C\int_{i-1}^{i+1}\sigma_i^2\big(\|\nabla u\|_{L^2}^4+\|\nabla d\|_{H^1}^4+\|\nabla u\|_{L^2}^2
+\delta\|\nabla^2d\|_{L^2}\|\nabla d_t\|_{L^2}^3\big)dt\nonumber\\
&\quad+C\int_{i-1}^{i+1}\sigma_i^2\big(\|\nabla d\|_{H^1}^2\|\nabla d_t\|_{L^2}^2+\|\nabla d\|_{H^1}^4\|d_t\|_{L^2}^2+\|\nabla u\|_{L^2}^2\|\nabla d_t\|_{L^2}^2\big)dt\nonumber\\
&\quad+C\int_{i-1}^{i+1}\sigma_i^2(\|\nabla u\|_{L^2}^4+\|\nabla d\|_{H^1}^4)\|\nabla d_t\|_{L^2}^2dt
\nonumber\\
&\le C\sup_{i-1\le t\le i+1}\big(\sigma_i^2\|\sqrt{\rho}\dot{u}\|_{L^2}^2\big)
\int_{i-1}^{i+1}\big(\|\nabla u\|_{L^2}^4
+\|\nabla u\|_{L^2}^2\big)dt\nonumber\\
&\quad+C\sup_{i-1\le t\le i+1}\big(\sigma_i^2\|\nabla d_t\|_{L^2}^2\big)
\int_{i-1}^{i+1}\big(\|\nabla u\|_{L^2}^4
+\|\nabla u\|_{L^2}^2+\|\nabla d\|_{H^1}^2\big)dt\nonumber\\
&\quad+C\sup_{i-1\le t\le i+1}\Big(\|\nabla u\|_{L^2}+\|\Delta d\|_{L^2}+E_0^\frac12+\delta\Big)\int_{i-1}^{i+1}\sigma_i^2\big(
\|\sqrt{\rho}\dot{u}\|_{L^2}^3
+\|\nabla d_t\|_{L^2}^3\big)dt\nonumber\\
&\quad+C\sup_{i-1\le t\le i+1}(\sigma_i\|\nabla d\|_{H^1}^4)\int_{i-1}^{i+1}\sigma_i\|d_t\|_{L^2}^2dt
+C\int_{i-1}^{i+1}\sigma_i^2\big(\|\nabla u\|_{L^2}^4+\|\nabla^2d\|_{L^2}^4\big)dt\nonumber\\
&\quad+\frac14\sup_{i-1\le t\le i+1}\big[\sigma_i^2\big(\|\sqrt{\rho}\dot{u}\|_{L^2}^2
+\|\nabla d_t\|_{L^2}^2\big)\big]+CE_0\nonumber\\
&\le \frac14\sup_{i-1\le t\le i+1}\big[\sigma_i^2\big(\|\sqrt{\rho}\dot{u}\|_{L^2}^2
+\|\nabla d_t\|_{L^2}^2\big)\big]\nonumber\\
&\quad+C\sup_{i-1\le t\le i+1}\big[\sigma_i\big(\|\sqrt{\rho}\dot{u}\|_{L^2}
+\|\nabla d_t\|_{L^2}\big)\big]
\int_{i-1}^{i+1}\sigma_i\big(\|\sqrt{\rho}\dot{u}\|_{L^2}^2
+\|\nabla d_t\|_{L^2}^2\big)dt\nonumber\\
&\quad+CE_0\sup_{i-1\le t\le i+1}\big[\sigma_i^2\big(\|\sqrt{\rho}\dot{u}\|_{L^2}^2
+\|\nabla d_t\|_{L^2}^2\big)\big]+CE_0^\frac12\nonumber\\
&\le\frac14\sup_{i-1\le t\le i+1}\big[\sigma_i^2\big(\|\sqrt{\rho}\dot{u}\|_{L^2}^2
+\|\nabla d_t\|_{L^2}^2\big)\big]+CE_0^\frac12\sup_{i-1\le t\le i+1}\big[\sigma_i\big(\|\sqrt{\rho}\dot{u}\|_{L^2}
+\|\nabla d_t\|_{L^2}\big)\big]\nonumber\\
&\quad
+CE_0\sup_{i-1\le t\le i+1}\big[\sigma_i^2\big(\|\sqrt{\rho}\dot{u}\|_{L^2}^2
+\|\nabla d_t\|_{L^2}^2\big)\big]+CE_0^\frac12\nonumber\\
&\le \frac14\sup_{i-1\le t\le i+1}\big[\sigma_i^2\big(\|\sqrt{\rho}\dot{u}\|_{L^2}^2
+\|\nabla d_t\|_{L^2}^2\big)\big]+CE_0^\frac12,
\end{align}
where we have used
\begin{align*}
\int_{\partial\Omega}(u\cdot\nabla n\cdot u)FdS
&\le C\||u|^2|F|\|_{W^{1,1}}\le C\|\nabla u\|_{L^2}^2\|F\|_{H^1}\nonumber\\
&\le \frac{1}{4}(\|\sqrt{\rho}\dot{u}\|_{L^2}^2+\|\nabla d_t\|_{L^2}^2)
+C\big(\|\nabla u\|_{L^2}^4+\|\nabla d\|_{H^1}^4+\|\nabla d\|_{H^1}^2\big).
\end{align*}
According to \eqref{3.78}, we get that
\begin{align}\label{3.81}
\sup_{0\le t\le \sigma(T)}\big[\sigma_i^2\big(\|\sqrt{\rho}\dot{u}\|_{L^2}^2
+\|\nabla d_t\|_{L^2}^2\big)\big]\le CE_0^\frac12,
\end{align}
and
\begin{align}\label{3.82}
\sup_{i\le t\le i+1}\big(\|\sqrt{\rho}\dot{u}\|_{L^2}^2
+\|\nabla d_t\|_{L^2}^2\big)\le CE_0^\frac12.
\end{align}
Hence, we deduce \eqref{3.75} from \eqref{3.6}, \eqref{3.81}, and \eqref{3.82}.

2. We integrate \eqref{t3.20} over $[t_1, t_2]\subseteq[\sigma(T), T]$ and take $\eta(t)=\sigma$ to obtain, from \eqref{3.6}
and \eqref{3.9}, that
\begin{align}\label{3.83}
\int_{t_1}^{t_2}\sigma\big(\|\sqrt{\rho}\dot{u}\|_{L^2}^2+\|\nabla d_t\|_{L^2}^2\big)dt
&\le C(E_0+A_1(T))+CE_0(t_2-t_1)+C\int_{t_1}^{t_2}\sigma\big(\|\nabla u\|_{L^2}^6+\|\nabla d\|_{H^1}^6\big)dt\nonumber\\
&\quad+C\int_{t_1}^{t_2}\sigma\big(\|\nabla u\|_{L^2}^4+\|\nabla d\|_{H^1}^4\big)dt\nonumber\\
&\le CE_0^\frac12+CE_0(t_2-t_1)+C\int_{t_1}^{t_2}\big(\|\nabla u\|_{L^2}^2
+\|\nabla d\|_{H^1}^2\big)dt\nonumber\\
&\le CE_0^\frac12+CE_0(t_2-t_1).
\end{align}
Similarly to \eqref{3.78}, integrating \eqref{t3.36} over $[t_1, t_2]$ and taking $\eta=\sigma^2$, we find that
\begin{align}\label{3.84}
&\int_{t_1}^{t_2}\sigma^2\big(\|\nabla\dot{u}\|_{L^2}^2+\|d_{tt}\|_{L^2}^2\big)dt\nonumber\\
&\le C\sup_{t_1\le t\le t_2}\big[\sigma\big(\|\sqrt{\rho}\dot{u}\|_{L^2}
+\|\nabla d_t\|_{L^2}\big)\big]
\int_{t_1}^{t_2}\sigma\big(\|\sqrt{\rho}\dot{u}\|_{L^2}^2
+\|\nabla d_t\|_{L^2}^2\big)dt
+CE_0^\frac12\nonumber\\
&\le CE_0^\frac12+CE_0^\frac16\int_{t_1}^{t_2}
\sigma\big(\|\sqrt{\rho}\dot{u}\|_{L^2}^2+\|\nabla d_t\|_{L^2}^2\big)dt\nonumber\\
&\le CE_0^\frac12+CE_0(t_2-t_1),
\end{align}
owing to \eqref{3.6}, \eqref{3.9}, \eqref{3.75}, and \eqref{3.83}. The conclusion follows.
\end{proof}

We still need the following result before showing the upper bounds of the density.
\begin{lemma}\label{l37}
Let the assumption of Proposition \ref{p31} be satisfied. Then there exists a positive constant $\varepsilon_7$ such that
\begin{align}\label{3.85}
\sup_{0\le t\le \sigma(T)}\big[\sigma\big(\|\sqrt{\rho}\dot{u}\|_{L^2}^2+
\|\nabla d_t\|_{L^2}^2\big)\big]+\int_0^{\sigma(T)}\sigma
\big(\|\nabla\dot{u}\|_{L^2}^2+\|d_{tt}\|_{L^2}^2\big)dt\le C,
\end{align}
provided that $E_0\le \varepsilon_7$.
\end{lemma}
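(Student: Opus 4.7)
The plan is to apply the differential inequality \eqref{t3.36} from Lemma \ref{zl33} with the weight $\eta(t)=\sigma(t)$ and integrate over $[0,t]$ for $t\in(0,\sigma(T)]$. Since $\sigma(0)=0$, the LHS delivers precisely the two quantities appearing on the left of \eqref{3.85}, with no residual contribution from $t=0$ (so the absence of a compatibility condition is irrelevant here). Noting that on $[0,\sigma(T)]\subseteq[0,1]$ we have $\sigma(t)=t$ and $\eta'(t)=1$, the terms on the RHS of \eqref{t3.36} split into two categories: those carrying the weight $\sigma\le 1$ and those multiplied by $|\eta'(t)|=1$.

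For the $\sigma$-weighted terms on the RHS of \eqref{t3.36} the treatment is essentially the one already carried out in the proof of \eqref{3.76}, relying on \eqref{t3.4}, \eqref{3.65}, and Young's inequality. The cubic self-feedback terms $\int_0^t\sigma(\|\sqrt\rho\dot u\|_{L^2}^3+\|\nabla d_t\|_{L^2}^3)\,ds$ are handled by pulling out a factor $\sup_{[0,\sigma(T)]}\sigma^{1/2}(\|\sqrt\rho\dot u\|_{L^2}+\|\nabla d_t\|_{L^2})$ and using \eqref{3.75} to see this factor is bounded by $CE_0^{1/4}$; the remaining $\int\sigma^{1/2}(\|\sqrt\rho\dot u\|_{L^2}^2+\|\nabla d_t\|_{L^2}^2)\,ds\le C\int(\|\sqrt\rho\dot u\|_{L^2}^2+\|\nabla d_t\|_{L^2}^2)\,ds\le C$ by \eqref{3.65}. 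Thus for $E_0\le\varepsilon_7$ sufficiently small these terms can be absorbed into the LHS or into a fixed constant $C$.

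The genuinely new contributions are those with $|\eta'(t)|=1$, since these lose the $\sigma$-weight. The redeeming feature is that we integrate only over the short interval $[0,\sigma(T)]$, and the quantities $\int_0^{\sigma(T)}(\|\sqrt\rho\dot u\|_{L^2}^2+\|\nabla d_t\|_{L^2}^2)\,ds$ are already controlled by $3K$ via \eqref{3.65}. The terms $\int_0^{\sigma(T)}(\|\nabla u\|_{L^2}^2+\|\nabla u\|_{L^2}^4+\|\nabla^2d\|_{L^2}^4+\|\nabla^2d\|_{L^2}^6+E_0)\,ds$ are bounded by $C$ through \eqref{t3.4}, \eqref{3.8}, and \eqref{t3.15}, while $\int_0^{\sigma(T)}\|\nabla^3d\|_{L^2}^2\,ds$ is controlled by means of \eqref{t3.34} in terms of $\int(\|\nabla d_t\|_{L^2}^2+\|\nabla d\|_{H^1}^2+\|\nabla u\|_{L^2}^4\|\nabla^2d\|_{L^2}^2)\,ds$, all of which are already bounded by \eqref{3.65}, \eqref{t3.4}, and \eqref{3.8}.

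The main obstacle is the boundary term $\int_{\partial\Omega}\sigma(t)(u\cdot\nabla n\cdot u)F\,dS$ evaluated at the upper endpoint (the one at $t=0$ vanishes due to $\sigma(0)=0$). As in the proof of Lemma \ref{zl33}, one estimates via the trace theorem (Lemma \ref{l23}) and interpolation
\begin{equation*}
\sigma(t)\Bigl|\int_{\partial\Omega}(u\cdot\nabla n\cdot u)F\,dS\Bigr|\le C\sigma(t)\|\nabla u\|_{L^2}^2\|F\|_{H^1},
\end{equation*}
and then invokes Lemma \ref{l28} to bound $\|F\|_{H^1}\le C(\|\sqrt\rho\dot u\|_{L^2}+\||\nabla d||\nabla^2d|\|_{L^2}+\|\nabla u\|_{L^2}+\|P-\bar P\|_{L^2})$. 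The factor $\sigma(t)\|\sqrt\rho\dot u\|_{L^2}\cdot\|\nabla u\|_{L^2}^2$ is absorbed by Young's inequality into $\tfrac14\sigma(t)\|\sqrt\rho\dot u\|_{L^2}^2+C\sigma(t)\|\nabla u\|_{L^2}^4\le \tfrac14\sigma(t)\|\sqrt\rho\dot u\|_{L^2}^2+C$, using \eqref{t3.4}. Combining all the estimates, choosing $E_0\le\varepsilon_7$ suitably small and moving the absorbed fraction of the left-hand side back, we arrive at \eqref{3.85}.
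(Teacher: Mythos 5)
Your overall strategy is exactly the paper's: take $\eta=\sigma$ in \eqref{t3.36}, integrate over $[0,\sigma(T)]$ (so the boundary term and the weighted quantities vanish at $t=0$), control the unweighted $|\eta'|$-terms by \eqref{3.65}, \eqref{3.8}, \eqref{t3.4}, \eqref{t3.15} and \eqref{t3.34}, estimate the endpoint boundary integral by the trace theorem together with Lemma \ref{l28}, and absorb the cubic self-interaction terms. Most of the details you give are sound.

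There is, however, one concrete error in your treatment of the cubic terms. You split $\sigma\|\cdot\|^3=\bigl(\sigma^{1/2}\|\cdot\|\bigr)\bigl(\sigma^{1/2}\|\cdot\|^2\bigr)$ and claim that \eqref{3.75} gives $\sup_{[0,\sigma(T)]}\sigma^{1/2}\bigl(\|\sqrt{\rho}\dot{u}\|_{L^2}+\|\nabla d_t\|_{L^2}\bigr)\le CE_0^{1/4}$. It does not: \eqref{3.75} controls $\sigma^2\bigl(\|\sqrt{\rho}\dot{u}\|_{L^2}^2+\|\nabla d_t\|_{L^2}^2\bigr)$, i.e.\ the quantity $\sigma\bigl(\|\sqrt{\rho}\dot{u}\|_{L^2}+\|\nabla d_t\|_{L^2}\bigr)$, and since $\sigma^{1/2}\ge\sigma$ on $[0,1]$ the $\sigma^{1/2}$-weighted supremum is the \emph{larger} one; the bound $\sigma^{1/2}\|\cdot\|\le\sigma^{-1/2}\cdot CE_0^{1/4}$ degenerates as $t\to0$, and no pointwise control at small times is available (indeed the whole point of the lemma is to produce such a bound without a compatibility condition). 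The repair is the one the paper uses and which you already deploy for the boundary term: keep the factor $\sup\sigma^{1/2}\bigl(\|\sqrt{\rho}\dot{u}\|_{L^2}+\|\nabla d_t\|_{L^2}\bigr)$, note that the remaining integral is $\le C(K)$ by \eqref{3.65}, and then apply Young's inequality, $C(K)\,a\le\frac12a^2+C(K)^2$ with $a^2\le 2\sup\sigma\bigl(\|\sqrt{\rho}\dot{u}\|_{L^2}^2+\|\nabla d_t\|_{L^2}^2\bigr)$, so that this contribution is absorbed into one half of the left-hand side plus a constant. With that substitution your argument closes and coincides with the paper's proof.
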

\begin{proof}[Proof]
Taking $\eta(t)=\sigma$ and integrating \eqref{t3.36} over $[0, \sigma(T)]$,
we get from \eqref{3.6}, \eqref{3.9}, \eqref{3.65}, and Young's inequality that
\begin{align*}
&\sup_{0\le t\le \sigma(T)}\big[\sigma\big(\|\sqrt{\rho}\dot{u}\|_{L^2}^2
+\|\nabla d_t\|_{L^2}^2\big)\big]
+\int_0^{\sigma(T)}\sigma(\|\nabla\dot{u}\|_{L^2}^2+\|d_{tt}\|_{L^2}^2)dt\nonumber\\
&\le C\int_0^{\sigma(T)}\sigma'\big(\|\sqrt{\rho}\dot{u}\|_{L^2}^2+\|\nabla d_t\|_{L^2}^2\big)dt+CE_0^\frac12\nonumber\\
&\quad+C\int_0^{\sigma(T)}\sigma\big(\|\nabla u\|_{L^2}+\|\Delta d\|_{L^2}+E_0^\frac12+\delta\big)
\big(\|\sqrt{\rho}\dot{u}\|_{L^2}^3
+\|\nabla d_t\|_{L^2}^3\big)dt\nonumber\\
&\le CE_0^\frac14\sup_{0\le t\le \sigma(T)}\big[\sigma^\frac12\big(\|\sqrt{\rho}\dot{u}\|_{L^2}
+\|\nabla d_t\|_{L^2}\big)\big]\int_0^{\sigma(T)}
\big(\|\sqrt{\rho}\dot{u}\|_{L^2}^2
+\|\nabla d_t\|_{L^2}^2\big)dt+C(K)\nonumber\\
&\le C(K)+C(K)\sup_{0\le t\le \sigma(T)}\big[\sigma^\frac12\big(\|\sqrt{\rho}\dot{u}\|_{L^2}^2
+\|\nabla d_t\|_{L^2}\big)\big]\nonumber\\
&\le \frac12\sup_{0\le t\le \sigma(T)}\big[\sigma\big(\|\sqrt{\rho}\dot{u}\|_{L^2}^2
+\|\nabla d_t\|_{L^2}^2\big)\big]+C,
\end{align*}
from which, the conclusion follows.
\end{proof}

With Lemmas \ref{l36} and \ref{l37} at hand, we derive the uniform upper bounds of the density, which is the key to obtain
all the higher-order estimates and thus to extend the classical solution globally.
\begin{lemma}\label{l38}
There exists a positive constant $\varepsilon$ as in Theorem \ref{thm1} such that if $(\rho, u, d)$ is a strong solution of \eqref{a1}--\eqref{a6} in $\Omega\times(0, T]$ satisfying \eqref{3.6}, then
\begin{align}\label{3.87}
\sup_{0\le t\le T}\|\rho(t)\|_{L^\infty}\le \frac74\hat{\rho}
\end{align}
provided that $E_0\le\varepsilon\triangleq\min\{1, \varepsilon_2, \varepsilon_3, \varepsilon_4, \varepsilon_5, \varepsilon_6,
\varepsilon_7\}$.
\end{lemma}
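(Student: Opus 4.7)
\textbf{Proof plan for Lemma \ref{l38}.} The plan is to apply Zlotnik's inequality (Lemma \ref{l210}) to the density evolution along Lagrangian trajectories. Let $X(t,x)$ denote the flow generated by $u$, and set $y(t) := \rho(X(t,x),t)$. Using $\text{div}\,u = (F + P - \bar P)/(2\mu+\lambda)$ from the definition \eqref{2.7} of the effective viscous flux, the continuity equation \eqref{a1} becomes
\begin{equation*}
y'(t) = g(y) + b'(t), \quad g(\xi) := -\tfrac{1}{2\mu+\lambda}\xi(P(\xi)-\bar P),\quad b'(t) := -\tfrac{1}{2\mu+\lambda}\rho F.
\end{equation*}
Since $P(\xi) = a\xi^\gamma$ with $\gamma > 1$, we have $g(\infty) = -\infty$, and for any prescribed $N_1 \ge 0$ one can pick $\xi_0 = \xi_0(N_1,\bar\rho,\gamma,a,\mu,\lambda)$ so that $g(\xi) \le -N_1$ for $\xi \ge \xi_0$; in particular $\xi_0$ can be chosen so that $\xi_0 \le \hat\rho + 1$ when $N_1$ is a universal constant. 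Thus it suffices to verify
\begin{equation*}
|b(t_2) - b(t_1)| \le \tfrac{1}{2\mu+\lambda}\int_{t_1}^{t_2}\|\rho F\|_{L^\infty}\,dt \le N_0 + N_1(t_2-t_1), \quad 0 \le t_1 < t_2 \le T,
\end{equation*}
with $N_1$ universal and $N_0 \to 0$ as $E_0 \to 0$, after which Lemma \ref{l210} gives $\rho \le \hat\rho + N_0 \le \tfrac{7}{4}\hat\rho$ once $E_0$ is chosen small enough that $N_0 \le \tfrac{3}{4}\hat\rho$.

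The next step is to control $\|F\|_{L^\infty}$ through a Gagliardo--Nirenberg interpolation: for some $q \in (3,6)$,
\begin{equation*}
\|F\|_{L^\infty} \le C\|F\|_{L^6}^{1-\theta}\|\nabla F\|_{L^q}^{\theta},
\end{equation*}
and then apply Lemma \ref{l28} to replace $\|F\|_{L^6}$ and $\|\nabla F\|_{L^q}$ by quantities involving $\|\sqrt\rho\dot u\|_{L^2}$, $\|\nabla\dot u\|_{L^2}$, $\||\nabla d||\nabla^2 d|\|_{L^p}$, $\|\nabla u\|_{L^2}$ and $\|P-\bar P\|_{L^2}$. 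The director-field cross term $\||\nabla d||\nabla^2 d|\|_{L^p}$ will be reduced via Sobolev embedding to the already-controlled quantities $\|\nabla d\|_{H^1}$ and $\|\nabla^3 d\|_{L^2}$ as in Lemma \ref{l33}, with $\|\nabla^3 d\|_{L^2}^2$ further estimated by $\|\nabla d_t\|_{L^2}^2$ using \eqref{t3.34}.

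For large time $t \ge 1$ (where $\sigma(t) \equiv 1$), I would use the uniform bound \eqref{3.75} giving $\|\sqrt\rho\dot u\|_{L^2}^2 + \|\nabla d_t\|_{L^2}^2 \le CE_0^{1/2}$ together with \eqref{3.76} to obtain
\begin{equation*}
\int_{t_1}^{t_2}\|F\|_{L^\infty}\,dt \le CE_0^{\alpha} + CE_0^{\beta}(t_2 - t_1),
\end{equation*}
for some $\alpha,\beta > 0$, which directly gives the required Zlotnik form. For small time $t_2 \le \sigma(T) = \min\{1,T\}$, the appropriate bounds are \eqref{3.85} and Lemma \ref{l34}, and the argument uses time weights $\sigma(t)$ and $\sigma^2(t)$ together with the fact that $\int_0^{\sigma(T)}\sigma^{-s}dt$ is finite for $s < 1$ to absorb the singular weights. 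In the mixed case $t_1 < \sigma(T) < t_2$, one splits at $\sigma(T)$ and combines the two estimates.

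The main technical obstacle is tracking the exponents and the dependence on $E_0$: one needs the constant in $N_0$ to be explicitly a positive power of $E_0$ (so it can be made as small as desired) while the time-linear part $N_1(t_2-t_1)$ must only depend on quantities strictly less than the level that would push $\xi_0$ above $\hat\rho + 1$. Because of the director coupling, there is an extra danger of $\|\nabla^3 d\|_{L^2}$ contributions spoiling the small-$E_0$ factor, so the key calculation is to ensure that every new term appearing from the $d$-dependent part of $F$ in Lemma \ref{l28} carries either a factor $E_0^\alpha$ (via \eqref{3.8}, \eqref{t3.6}, \eqref{3.9}) or can be absorbed in the time-linear $N_1$ part without enlarging $\xi_0$.
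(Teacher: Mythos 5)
Your plan follows essentially the same route as the paper's proof: Zlotnik's inequality applied to $D_t\rho=g(\rho)+b'(t)$ with $b'=-\frac{1}{2\mu+\lambda}\rho F$, interpolation of $\|F\|_{L^\infty}$ between an $L^p$ norm of $F$ and an $L^r$ norm of $\nabla F$, conversion of the director terms to $\|\nabla d_t\|_{L^2}$ via \eqref{t3.34}, and a time-splitting at $\sigma(T)$ using the weighted bounds \eqref{3.85}, \eqref{3.75}, \eqref{3.76} together with the integrability of $\sigma^{-s}$ for $s<1$ (small time, $N_1=0$) and a Young-inequality absorption yielding a harmless time-linear part $N_1=\frac{a}{2\mu+\lambda}$ (large time). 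The only bookkeeping point to tighten is the conclusion of Zlotnik's lemma, which gives $\max\{y^0,\xi_0\}+N_0$ rather than $\hat\rho+N_0$: in the large-time regime one must take $\xi_0=\frac{3}{2}\hat\rho$ (the bound inherited from the first stage), not $\xi_0\le\hat\rho+1$, and then conclude $\rho\le\frac{3}{2}\hat\rho+CE_0^{\alpha}\le\frac{7}{4}\hat\rho$.
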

\begin{proof}[Proof]
1. We rewrite $\eqref{a1}_1$ as
\begin{align}
D_t\rho=g(\rho)+b'(t),
\end{align}
where
\begin{align*}
D_t\rho=\rho_t+u\cdot\nabla\rho, \quad g(\rho)=-\frac{\rho(P-\bar{P})}{2\mu+\lambda},
\quad b(t)=-\frac{1}{2\mu+\lambda}\int_0^t\rho Fd\tau.
\end{align*}
For $t\in [0, \sigma(T)]$, we deduce from H\"older's inequality, Lemma \ref{l28}, and \eqref{3.65} that, for
$0\le t_1<t_2\le \sigma(T)$,
\begin{align}
&|b(t_2)-b(t_1)|\nonumber\\
&\le C(\hat{\rho})\int_0^{\sigma(T)}\|F\|_{L^\infty}dt\nonumber\\
&\le C\int_0^{\sigma(T)}\|F\|_{L^2}^\frac14\|\nabla F\|_{L^6}^\frac34dt+C\int_0^{\sigma(T)}\|F\|_{L^2}dt
\nonumber\\
&\le C\int_0^{\sigma(T)}\Big(
\|\nabla\dot{u}\|_{L^2}+\|\nabla u\|_{L^2}^2
+\|\nabla^2d\|_{L^2}^\frac12\|\nabla^3d\|_{L^2}^\frac32
+\|\nabla^2d\|_{L^2}^\frac32\|\nabla^3d\|_{L^2}^\frac12+\|\nabla d\|_{L^2}\|\nabla^2d\|_{L^2}\nonumber\\
&\quad+\|\nabla d\|_{L^2}\|\nabla^3d\|_{L^2}\Big)^\frac34\Big(\|\nabla u\|_{L^2}^\frac14
+\|P-\bar{P}\|_{L^2}^\frac14\Big)dt+C\int_0^{\sigma(T)}(\|\nabla u\|_{L^2}+\|P-\bar{P}\|_{L^2})dt\nonumber\\
&\le C\int_0^{\sigma(T)}\Big(
\|\nabla\dot{u}\|_{L^2}+\|\nabla u\|_{L^2}^2
+\|\nabla^2d\|_{L^2}^\frac12\|\nabla d_t\|_{L^2}^\frac32
+\|\nabla^2d\|_{L^2}^\frac32\|\nabla d_t\|_{L^2}^\frac12+\|\nabla d\|_{H^1}^2\nonumber\\
&\quad+\|\nabla d_t\|_{L^2}\Big)^\frac34\Big(\|\nabla u\|_{L^2}^\frac14
+\|P-\bar{P}\|_{L^2}^\frac14\Big)dt+C\int_0^{\sigma(T)}(\|\nabla u\|_{L^2}+\|P-\bar{P}\|_{L^2})dt\nonumber\\
&\le CE_0^\frac18
+C\int_0^{\sigma(T)}\Big(\sigma^{-\frac12}(\sigma^\frac12\|\nabla u\|_{L^2})^\frac14+E_0^\frac18\sigma^{-\frac38}\Big)
\big(\sigma\|\nabla\dot{u}\|_{L^2}^2\big)^\frac38dt\nonumber\\
&\quad+C\int_0^{\sigma(T)}\sigma^{-\frac18}\big(\sigma\|\nabla u\|_{L^2}^2\big)^\frac18dt+C\int_0^{\sigma(T)}\Big(\sigma^{-\frac18}(\sigma\|\nabla u\|_{L^2}^2)^\frac18+E_0^\frac18\Big)\|\nabla d_t\|_{L^2}^\frac98dt\nonumber\\
&\quad+C\int_0^{\sigma(T)}\Big(\sigma^{-\frac18}(\sigma\|\nabla u\|_{L^2}^2)^\frac18
+E_0^\frac18\Big)\big(\|\nabla d_t\|_{L^2}\big)^\frac38dt\nonumber\\
&\quad+CE_0^\frac38\int_0^{\sigma(T)}\Big(\sigma^{-\frac18}(\sigma\|\nabla u\|_{L^2}^2)^\frac18
+E_0^\frac18\Big)\big(\|\nabla d_t\|_{L^2}\big)^\frac34dt\nonumber\\
&\le C\Big[E_0^\frac{1}{16}\Big(\int_0^{\sigma(T)}
\sigma^{-\frac45}dt\Big)^\frac58
+E_0^\frac18\Big(\int_0^{\sigma(T)}\sigma^{-\frac35}dt\Big)^\frac58\Big]
\Big[\int_0^{\sigma(T)}\sigma\|\nabla\dot{u}\|_{L^2}^2dt\Big]^\frac38\nonumber\\
&\quad+C\left[E_0^\frac18+E_0^\frac{1}{16}
\Big(\int_0^{\sigma(T)}\sigma^{-\frac{2}{7}}dt\Big)^\frac{7}{16}\right]
\Big(\int_0^{\sigma(T)}\|\nabla d_t\|_{L^2}^2\Big)^\frac{9}{16}\nonumber\\
&\quad+C\left[E_0^\frac18+E_0^\frac{1}{16}
\Big(\int_0^{\sigma(T)}\sigma^{-\frac{2}{13}}dt\Big)^\frac{13}{16}\right]
\Big(\int_0^{\sigma(T)}\|\nabla d_t\|_{L^2}^2\Big)^\frac{3}{16}\nonumber\\
&\quad+C\left[E_0^\frac18+E_0^\frac{1}{16}
\Big(\int_0^{\sigma(T)}\sigma^{-\frac15}dt\Big)^\frac{5}{8}\right]
\Big(\int_0^{\sigma(T)}\|\nabla d_t\|_{L^2}^2\Big)^\frac{3}{8}+CE_0^\frac18\nonumber\\
&\le CE_0^\frac{1}{16},
\end{align}
provided that $E_0\le \varepsilon$. Thus, for $t\in [0, \sigma(T)]$, we can choose $N_0$ and $N_1$ in Lemma \ref{l210} as
\begin{align}
N_1=0, \quad N_0=CE_0^\frac{1}{16},
\end{align}
and
$\xi_0=\hat{\rho}$. Then, one has
\begin{align}
g(\xi)=-\frac{\xi}{2\mu+\lambda}(\xi^\gamma-\bar{\rho}^\gamma)\le -N_1=0 \quad {\rm for~all}~\xi\ge \xi_0=\hat{\rho},
\end{align}
which together with Lemma \ref{l210} implies that
\begin{align}\label{3.92}
\sup_{0\le t\le \sigma(T)}\|\rho\|_{L^\infty}\le \max\{\hat{\rho}, \xi_0\}+N_0\le \hat{\rho}+CE_0^\frac{1}{16}\le \frac{3\hat{\rho}}{2},
\end{align}
provided that $E_0\le \varepsilon$.

2. For $t\in [\sigma(T), T]$, we derive from Lemma \ref{l28}, \eqref{3.6}, \eqref{3.8}, \eqref{3.9}, \eqref{t3.34},
\eqref{z3.44}, and \eqref{3.76} that, for $\sigma(T)\le t_1<t_2\le T$,
\begin{align}
|b(t_2)-b(t_1)|&\le C(\hat{\rho})\int_{t_1}^{t_2}\|F\|_{L^\infty}dt\nonumber\\
&\le C\int_{t_1}^{t_2}\|F\|_{L^\infty}^\frac83dt
+\frac{a}{4\mu+2\lambda}(t_2-t_1)\nonumber\\
&\le C\int_{t_1}^{t_2}\|F\|_{L^2}^\frac23\|\nabla F\|_{L^6}^2dt+\int_{t_1}^{t_2}\|F\|_{L^2}^\frac83dt
+\frac{a}{4\mu+2\lambda}(t_2-t_1)\nonumber\\
&\le C\int_{t_1}^{t_2}\Big(
\|\nabla\dot{u}\|_{L^2}^2+\|\nabla u\|_{L^2}^2
+\|\nabla^2d\|_{L^2}\|\nabla^3d\|_{L^2}^3
+\|\nabla^2d\|_{L^2}^3\|\nabla^3d\|_{L^2}\nonumber\\
&\quad+\|\nabla d\|_{L^2}\|\nabla^2d\|_{L^2}+\|\nabla d\|_{L^2}\|\nabla^3d\|_{L^2}
\Big)\Big(\|\nabla u\|_{L^2}^\frac23
+\|P-\bar{P}\|_{L^2}^\frac23\Big)dt\nonumber\\
&\quad+C\int_{t_1}^{t_2}\Big(\|\nabla u\|_{L^2}^\frac83+\|P-\bar{P}\|_{L^2}^\frac83\Big)dt
+\frac{a}{4\mu+2\lambda}(t_2-t_1)\nonumber\\
&\le \Big(\frac{a}{4\mu+2\lambda}+CE_0^\frac23\Big)(t_2-t_1)
+CE_0^\frac16\int_{t_1}^{t_2}\|\nabla\dot{u}\|_{L^2}^2dt\nonumber\\
&\quad+CE_0^\frac16\Big(\int_{t_1}^{t_2}\|\nabla^2d\|_{L^2}^2\Big)^\frac12
\Big(\int_{t_1}^{t_2}\|\nabla d_t\|_{L^2}^2dt\Big)^\frac12+CE_0^\frac16\int_{t_1}^{t_2}\big(\|\nabla^2d\|_{H^1}^2+\|\nabla u\|_{L^2}^2\big)dt\nonumber\\
&\quad+CE_0^\frac16\sup_{t_1\le t\le t_2}\|\nabla d_t\|_{L^2}\int_{t_1}^{t_2}\|\nabla d_t\|_{L^2}^2dt
+CE_0^\frac16\int_{t_1}^{t_2}\|\nabla d_t\|_{L^2}^2dt\nonumber\\
&\le \Big(\frac{a}{4\mu+2\lambda}+CE_0^\frac23+CE_0^\frac{17}{12}+E_0^\frac76\Big)(t_2-t_1)
+CE_0^\frac16\nonumber\\
&\le \frac{a}{2\mu+\lambda}(t_2-t_1)+CE_0^\frac16,
\end{align}
provided that $E_0\le \varepsilon$. Thus, for $t\in [\sigma(T), T]$, we can choose $N_0$, $N_1$, and $\xi_0$ in Lemma \ref{l210} as follows:
\begin{align*}
N_0=CE_0^\frac16, \quad N_1=\frac{a}{2\mu+\lambda}, \quad \xi_0=\frac{3\hat{\rho}}{2}.
\end{align*}
Since for all $\xi\ge \xi_0=\frac{3\hat{\rho}}{2}>\bar{\rho}+1$,
\begin{align}
g(\xi)=-\frac{\xi}{2}(\xi^\gamma-\bar{\rho}^\gamma)\le -N_1 \quad {\rm for~all}~\xi\ge \xi_0=\frac{3\hat{\rho}}{2}.
\end{align}
Thus, due to Lemma \ref{l210}, we arrive at
\begin{align}\label{3.95}
\sup_{\sigma(T)\le t\le T}\|\rho\|_{L^\infty}\le \frac{3\hat{\rho}}{2}+N_0\le \frac{3\hat{\rho}}{2}+CE_0^\frac16\le \frac{7\hat{\rho}}{4},
\end{align}
provided that $E_0\le \varepsilon$. The combination of \eqref{3.92} and \eqref{3.95}, we obtain \eqref{3.87}.
\end{proof}

Now we are ready to prove Proposition \ref{p31}.
\begin{proof}[Proof of Proposition \ref{p31}]
Proposition \ref{p31} follows from Lemma \ref{l34}, Lemma \ref{l35}, and Lemma \ref{l38}.
\end{proof}

\subsection{Higher-order estimates}\label{sec3.2}

In this subsection, we establish the time-dependent higher-order estimates of solutions, which are necessary for the global existence of
strong solutions. In what follows, we denote by $C$ or $C_i\ (i=1, 2,\ldots)$ the various positive constants, which may depend on the initial data, $\mu$, $\lambda$, $\gamma$, $a$, $\hat{\rho}$, $\Omega$, $M_1$, $M_2$,  $\bar{\rho}$, and $T$ as well.

\begin{lemma}\label{l30}
Under the conditions of Theorem \ref{thm1}, it holds that
\begin{align}
&\sup_{0\le t\le T}\|\nabla\rho\|_{L^q}+\int_0^T\|\nabla u\|_{L^\infty}
dt\le C,\label{z3.72}\\
&\sup_{0\le t\le T}\big[t\big(\|\sqrt{\rho}\dot{u}\|_{L^2}^2+
\|\nabla d_t\|_{L^2}^2\big)\big]+\int_0^Tt\big(\|\nabla\dot{u}\|_{L^2}^2+\|d_{tt}\|_{L^2}^2\big)dt\le C,\label{z3.71}\\
&\sup_{0\le t\le T}\big[t\big(\|u\|_{H^2}^2+\|\nabla^3d\|_{L^2}^2\big)\big]
+\int_0^Tt\big(\|\nabla^2d_t\|_{L^2}^2+\|\nabla^4d\|_{L^2}^2\big)dt\le C.\label{t3.80}
\end{align}
\end{lemma}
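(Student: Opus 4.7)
The plan is to bootstrap from the uniform-in-time lower-order bounds of Proposition \ref{p31} and the weighted a priori estimates in Lemmas \ref{l36}--\ref{l37} to the three claimed higher-order bounds. Since the constant $C$ is now allowed to depend on $T$, most of the work is a bookkeeping upgrade from $\sigma$-weights to $t$-weights, and the only genuinely delicate step is the log-Gronwall control of $\|\nabla\rho\|_{L^q}$ in \eqref{z3.72}.

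I would first dispose of \eqref{z3.71}. On $[0,\sigma(T)]$ the supremum bound follows immediately from Lemma \ref{l37} since $\sigma(t)=t$ there; on $[\sigma(T),T]$ one has $\sigma\equiv 1$, so Lemma \ref{l36} gives $\|\sqrt\rho\dot u\|_{L^2}^2+\|\nabla d_t\|_{L^2}^2\le CE_0^{1/2}$ and multiplying by $t\le T$ remains bounded. The integrated piece is handled analogously, splicing the $\sigma$-weighted bound of Lemma \ref{l37} on $[0,\sigma(T)]$ with \eqref{3.76} (where $\sigma^2\equiv 1$) on $[\sigma(T),T]$.

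The core of the proof is \eqref{z3.72}. Differentiating \eqref{a1} and testing against $|\nabla\rho|^{q-2}\nabla\rho$, and similarly for $\nabla P$, yields
\[
\tfrac{d}{dt}\bigl(\|\nabla\rho\|_{L^q}+\|\nabla P\|_{L^q}\bigr)\le C\|\nabla u\|_{L^\infty}\bigl(\|\nabla\rho\|_{L^q}+\|\nabla P\|_{L^q}\bigr)+C\|\nabla^2u\|_{L^q}.
\]
Applying Lemmas \ref{l24}--\ref{l25} to $u$ (using the slip condition), together with Lemma \ref{l28} for $\divv u=(F+P-\bar P)/(2\mu+\lambda)$ and $\curl u$, one gets
\[
\|\nabla^2u\|_{L^q}\le C\bigl(\|\rho\dot u\|_{L^q}+\|\nabla P\|_{L^q}+\||\nabla d||\nabla^2d|\|_{L^q}+\|\nabla u\|_{L^2}\bigr),
\]
where $\|\rho\dot u\|_{L^q}$ is interpolated between $\|\sqrt\rho\dot u\|_{L^2}$ and $\|\nabla\dot u\|_{L^2}$ (both in hand from the first step, away from $t=0$), and the director contribution is controlled via $\|\nabla d\|_{L^\infty}\|\nabla^2 d\|_{L^q}$ using the weighted $\nabla^3 d$-bound provided by \eqref{t3.34}. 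The Beale-Kato-Majda inequality (Lemma \ref{l211}), combined with $L^\infty$ estimates of $F$ and $\curl u$ from Gagliardo-Nirenberg interpolation against Lemma \ref{l28}, yields
\[
\|\nabla u\|_{L^\infty}\le C\Phi(t)\ln(e+\|\nabla^2u\|_{L^q})+C,\qquad \Phi\in L^2(0,T).
\]
Setting $f(t)=e+\|\nabla\rho\|_{L^q}+\|\nabla P\|_{L^q}$, the two inequalities combine into $f'\le C\Phi(t)f\ln f+\text{(integrable terms)}$, and a Bihari / log-Gronwall argument forces $f\in L^\infty(0,T)$. Reinserting gives $\int_0^T\|\nabla u\|_{L^\infty}dt\le C$.

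With $\nabla\rho\in L^\infty(0,T;L^q)$ in hand, \eqref{t3.80} is a consequence: the $L^2$-version of the elliptic estimate for $\mathcal L u$ gives $t\|u\|_{H^2}^2\le C$ via the first step, while $t\|\nabla^3d\|_{L^2}^2\le C$ follows from \eqref{t3.34} multiplied by $t$. The integrability of $t\|\nabla^2d_t\|_{L^2}^2$ comes from standard $L^2$ Neumann regularity for the time-differentiated equation \eqref{a3}, producing $\|\nabla^2d_t\|_{L^2}^2\lesssim\|d_{tt}\|_{L^2}^2+(\text{lower order})$, and integrating against Step~1. The bound on $t\|\nabla^4d\|_{L^2}^2$ follows by applying elliptic regularity one order higher to the Neumann problem $\Delta d=d_t+u\cdot\nabla d-|\nabla d|^2d$ and using Sobolev multiplication in $W^{1,q}$. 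The main obstacle is absorbing the logarithm in the previous paragraph; the only new feature beyond the Navier-Stokes treatment of Cai-Li is the careful accounting of $\||\nabla d||\nabla^2d|\|_{L^q}$ via the weighted $\nabla^3d$-estimate from \eqref{t3.34}, after which the classical Hoff-type log-Gronwall machinery applies verbatim.
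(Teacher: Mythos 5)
Your proposal is correct and follows essentially the same route as the paper: \eqref{z3.71} by splicing the $\sigma$-weighted bounds of Lemmas \ref{l36} and \ref{l37} across $t=\sigma(T)$, \eqref{z3.72} by the transport equation for $\nabla\rho$ combined with the div-curl/effective-flux elliptic estimates and the Beale--Kato--Majda log-Gronwall argument of Cai--Li (which the paper simply cites without detail), and \eqref{t3.80} by Lemma \ref{l24} together with $L^2$ elliptic regularity for the Neumann problem satisfied by $d$. Your sketch in fact supplies more of the detail for \eqref{z3.72}, including the correct handling of the extra term $\||\nabla d||\nabla^2 d|\|_{L^q}$ via the weighted $\nabla^3 d$-bound, than the paper does.
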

\begin{proof}[Proof]
First, based on the Beale-Kato-Majda type
inequality (see Lemma \ref{l211}), we can obtain \eqref{z3.72}. Since the argument is similar to \cite[Lemma 4.1]{CJ21}, we omit the details for simplicity.

By virtue of \eqref{3.85}, it is easy to check that
\begin{align*}
\sup_{0\le t\le \sigma(T)}\big[t(\|\sqrt{\rho}\dot{u}\|_{L^2}^2+
\|\nabla d_t\|_{L^2}^2)\big]
+\int_0^{\sigma(T)}t(\|\nabla\dot{u}\|_{L^2}^2+\|d_{tt}\|_{L^2}^2)dt\le C,
\end{align*}
which together with \eqref{3.76} leads to \eqref{z3.71}.

Next, by Lemma \ref{l24}, \eqref{2.11}, \eqref{2.7}, and \eqref{2.13}, we obtain that
\begin{align*}
\|\nabla^2u\|_{L^2}&\le C\big(\|\divv u\|_{H^1}+\|\curl u\|_{H^1}\big)\nonumber\\
&\le C\big(\|F+P-\bar{P}\|_{H^1}+\|\curl u\|_{H^1}\big)\nonumber\\
&\le C\big(\|\rho\dot{u}\|_{L^2}+\||\nabla d||\nabla^2d|\|_{L^2}+\|\nabla P\|_{L^2}+\|P-\bar{P}\|_{L^2}+\|\nabla u\|_{L^2}\big).
\end{align*}
This along with \eqref{z3.71}, \eqref{t3.34}, \eqref{3.8}, \eqref{t3.6}, and \eqref{3.6} yields that
\begin{align}\label{z3.83}
\sup_{0\le t\le T}\big[t\big(\|u\|_{H^2}^2+\|\nabla^3d\|_{L^2}^2\big)\big]
\le C.
\end{align}

Furthermore, taking the operator $\nabla$ to \eqref{z3.31}, we get that
\begin{align}\label{z3.84}
-\nabla^2\Delta d=\nabla^2(|\nabla d|^2d-u\cdot\nabla d-d_t).
\end{align}
Applying the standard $L^2$-estimate to \eqref{z3.84}, we derive that
\begin{align*}
\|\nabla^4d\|_{L^2}^2&\le C\big(\|\nabla^2d_t\|_{L^2}^2+\|\nabla^2(u\cdot\nabla d)\|_{L^2}^2
+\|\nabla^2(|\nabla d|^2d)\|_{L^2}^2\big)+C\|\nabla d\|_{H^1}^2\nonumber\\
&\le C\|\nabla^2d_t\|_{L^2}^2+C\|u\|_{L^\infty}^2\|\nabla^3d\|_{L^2}^2
+C\|\nabla d\|_{L^\infty}^2\|\nabla^2u\|_{L^2}^2+C\|\nabla u\|_{L^6}^2\|\nabla^2d\|_{L^3}^2\nonumber\\
&\quad+C\|\nabla^2d\|_{L^4}^4+C\|\nabla d\|_{L^6}^2\|\nabla^3d\|_{L^3}^2
+C\|\nabla d\|_{L^2}^2\|\nabla^2d\|_{L^2}^2\|\nabla^2d\|_{L^6}^2+C\nonumber\\
&\le C\|\nabla^2d_t\|_{L^2}^2+C\|\nabla u\|_{H^1}^2\|\nabla^2d\|_{H^1}^2+C\|\nabla^2d\|_{L^2}\|\nabla^3d\|_{L^2}^3
+C\|\nabla^3d\|_{L^2}^2\nonumber\\
&\quad+C\|\nabla d\|_{H^1}^2\big(\|\nabla^3d\|_{L^2}\|\nabla^4d\|_{L^2}+\|\nabla^3d\|_{L^2}^2\big)
+C\nonumber\\
&\le \frac12\|\nabla^4d\|_{L^2}^2+C\|\nabla d\|_{H^1}^2\|\nabla^3d\|_{L^2}^2+C\|\nabla u\|_{H^1}^2\|\nabla^2d\|_{H^1}^2
+C\|\nabla^3d\|_{L^2}^2\nonumber\\
&\quad+C\|\nabla^2d\|_{L^2}\|\nabla^3d\|_{L^2}^3+C,
\end{align*}
which leads to
\begin{align*}
\|\nabla^4d\|_{L^2}^2&\le C\|\nabla d\|_{H^1}^2\|\nabla^3d\|_{L^2}^2+C\|\nabla u\|_{H^1}^2\|\nabla^2d\|_{H^1}^2
+C\|\nabla^3d\|_{L^2}^2\nonumber\\
&\quad+C\|\nabla^2d\|_{L^2}\|\nabla^3d\|_{L^2}^3+C.
\end{align*}
This together with \eqref{z3.83}, \eqref{z3.71}, \eqref{t3.34}, \eqref{3.6}, \eqref{3.8}, \eqref{3.65},
 and \eqref{3.44} implies \eqref{t3.80}.
\end{proof}

\begin{lemma}\label{l3.10}
Under the conditions of Theorem \ref{thm1}, it holds that
\begin{align*}
\sup_{0\le t\le T}\big(t\|\sqrt{\rho}u_t\|_{L^2}^2\big)+\int_0^Tt\|\nabla u_t\|_{L^2}^2dt\le C.
\end{align*}
\end{lemma}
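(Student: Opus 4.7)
The plan is to reduce everything to quantities already controlled in Lemma \ref{l30} via the decomposition $u_t = \dot{u} - u\cdot\nabla u$ and its gradient. Since \eqref{z3.71} already gives $\sup_{0\le t\le T}(t\|\sqrt{\rho}\dot{u}\|_{L^2}^2) + \int_0^T t\|\nabla\dot{u}\|_{L^2}^2 dt \le C$, only the convective part $u\cdot\nabla u$ needs to be handled.

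For the pointwise bound, I would use
\begin{align*}
\|\sqrt{\rho} u_t\|_{L^2}^2 \le 2\|\sqrt{\rho}\dot{u}\|_{L^2}^2 + 2\hat{\rho}\|u\cdot\nabla u\|_{L^2}^2,
\end{align*}
and then estimate $\|u\cdot\nabla u\|_{L^2}^2 \le C\|u\|_{L^6}^2\|\nabla u\|_{L^3}^2 \le C\|\nabla u\|_{L^2}^2\|\nabla u\|_{H^1}^2$ via Sobolev embedding and the Gagliardo–Nirenberg inequality \eqref{2.3}. Multiplying by $t$ and using \eqref{3.6} together with $t\|u\|_{H^2}^2 \le C$ from \eqref{t3.80} gives $t\|\sqrt{\rho} u_t\|_{L^2}^2 \le C$.

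For the integral bound, I would differentiate: $\nabla u_t = \nabla\dot{u} - \nabla u\cdot\nabla u - u\cdot\nabla^2 u$, whence
\begin{align*}
\|\nabla u_t\|_{L^2}^2 \le C\|\nabla\dot{u}\|_{L^2}^2 + C\|\nabla u\|_{L^4}^4 + C\|u\|_{L^\infty}^2\|\nabla^2 u\|_{L^2}^2.
\end{align*}
Gagliardo–Nirenberg yields $\|\nabla u\|_{L^4}^4 \le C\|\nabla u\|_{L^2}\|\nabla u\|_{H^1}^3$ and Sobolev embedding gives $\|u\|_{L^\infty}^2 \le C\|u\|_{H^2}^2$, so altogether
\begin{align*}
t\|\nabla u_t\|_{L^2}^2 \le Ct\|\nabla\dot{u}\|_{L^2}^2 + Ct\|u\|_{H^2}^4.
\end{align*}
The first term integrates to a constant by \eqref{z3.71}. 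Since $t\|u\|_{H^2}^2 \le C$ from \eqref{t3.80}, the second gives $\int_0^T t\|u\|_{H^2}^4 dt \le C\int_0^T \|u\|_{H^2}^2 dt$, and this last integral is finite by the elliptic estimate
\begin{align*}
\|\nabla^2 u\|_{L^2} \le C\big(\|\rho\dot{u}\|_{L^2} + \||\nabla d||\nabla^2 d|\|_{L^2} + \|\nabla P\|_{L^2} + \|P-\bar{P}\|_{L^2} + \|\nabla u\|_{L^2}\big)
\end{align*}
(already used in the proof of Lemma \ref{l30}), combined with $\int_0^T \|\sqrt{\rho}\dot{u}\|_{L^2}^2 dt \le C$ inherited from summing \eqref{3.65} and \eqref{z3.44} over unit time intervals, the uniform bound on $\rho$, and $\nabla P \in L^\infty_t L^q$ from \eqref{z3.72}.

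The main obstacle is producing a time-\emph{integrable} (not just $t^{-1}$ singular) bound on $\|u\|_{H^2}^2$; the naive estimate $\|u\|_{H^2}^2 \le C/t$ from \eqref{t3.80} is not summable near $t=0$. The remedy, sketched above, is to bypass the sup-bound by exploiting that the elliptic bound for $\|\nabla^2 u\|_{L^2}$ is controlled pointwise in time by $\|\sqrt{\rho}\dot{u}\|_{L^2}$ plus terms already in $L^1_t$, and that $\int_0^T \|\sqrt{\rho}\dot{u}\|_{L^2}^2 dt$ itself is finite (with the near-zero contribution coming from \eqref{3.65} and the later contribution from the $\sigma_i$-weighted bounds where $\sigma_i=1$ on $[i,i+1]$).
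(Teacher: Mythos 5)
Your proposal is correct and follows essentially the same route as the paper: decompose $u_t=\dot u-u\cdot\nabla u$, control the convective terms by $\|\nabla u\|_{L^4}$, $\|u\|_{L^\infty}\|\nabla^2u\|_{L^2}$ and the bound $t\|u\|_{H^2}^2\le C$ from \eqref{t3.80}, and invoke \eqref{z3.71} for the $\dot u$ parts. Your closing discussion merely makes explicit a step the paper leaves implicit (that $\int_0^T\|u\|_{H^2}^2\,dt$ is finite via the elliptic estimate and the unweighted integrability of $\|\sqrt{\rho}\dot u\|_{L^2}^2$), so no further changes are needed.
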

\begin{proof}[Proof]
By Lemma \ref{l30}, Proposition \ref{p31}, \eqref{2.3}, and Sobolev's inequality, we deduce that
\begin{align*}
t\|\sqrt{\rho}u_t\|_{L^2}^2&\le t\|\sqrt{\rho}\dot{u}\|_{L^2}^2+t\|\sqrt{\rho}u\cdot\nabla u\|_{L^2}^2\nonumber\\
&\le C(T)+Ct\|u\|_{L^6}^2\|\nabla u\|_{L^4}^2\nonumber\\
&\le C(T)+C\|\nabla u\|_{L^2}^2\big(t\|u\|_{H^2}^2\big)\nonumber\\
&\le C,
\end{align*}
and
\begin{align*}
\int_0^Tt\|\nabla u_t\|_{L^2}^2dt
&\le \int_0^Tt\|\nabla\dot{u}\|_{L^2}^2dt
+\int_0^Tt\|\nabla(u\cdot\nabla u)\|_{L^2}^2dt\nonumber\\
&\le C(T)+\int_0^Tt\big(\|\nabla u\|_{L^4}^4+\|u\|_{L^\infty}^2\|\nabla^2u\|_{L^2}^2\big)dt\nonumber\\
&\le C(T)+C\int_0^Tt\|\nabla u\|_{H^1}^2\|u\|_{H^2}^2dt\nonumber\\
&\le C.
\end{align*}
This finishes the proof of Lemma \ref{l3.10}.
\end{proof}

\section{Proof of Theorem \ref{thm1}}\label{sec4}
With all the \textit{a priori} estimates in Section \ref{sec3} at hand, we are going to prove the main result of the paper.

\begin{proof}[Proof of Theorem \ref{thm1}]
By Lemma \ref{l22}, there exists a $T_*>0$ such that the system \eqref{a1}--\eqref{a6} has a unique classical solution
$(\rho, u, d)$ in $\Omega\times(0, T_*]$.
By the definitions of \eqref{3.5} and \eqref{a10}, it is easy to check that
\begin{align*}
0\le \rho_0\le \hat{\rho},\quad A_1(0)=0, \quad A_2(0)\le K.
\end{align*}
Therefore, there exists a $T_1\in (0, T_*]$ such that
\begin{align}\label{z4.1}
0\le \rho_0\le 2\hat{\rho},\quad A_1(T)\le 2E_0^\frac12, \quad A_2(\sigma(T))\le 4K
\end{align}
holds for $T=T_1$.

Next, we set
\begin{align}
T^*=\sup\{T|\eqref{z4.1} ~\text{holds}\}.
\end{align}
Then $T^*\ge T_1>0$. Hence, for any $0<\tau<T< T^*$ with $T$ finite, it follows from
Lemmas \ref{l30} and \ref{l3.10} that
\begin{align}\label{z4.3}
\rho\in C([0, T]; W^{1, q}),\
(\nabla u, \, \nabla^2d) \in C(\tau, T; L^q),
\end{align}
where one has taken advantage of the standard embedding
\begin{align*}
L^\infty(\tau, T; H^1)\cap H^1(\tau, T; H^{-1})\hookrightarrow C([\tau, T]; L^q), \quad {\rm for~any}~q\in (3, 6).
\end{align*}

Finally, we claim that
\begin{align}\label{z4.6}
T^*=\infty.
\end{align}
Otherwise, $T^*<\infty$. Then, by Proposition \ref{p31}, \eqref{3.7} holds for $T=T^*$.
It follows from \eqref{3.7}, \eqref{3.8}, and \eqref{z4.3} that $(\rho(x, T^*), u(x, T^*), d(x, T^*))$
satisfies
\begin{align*}
\begin{cases}
0\le \rho(x, T^*)\le \hat{\rho}, \ (\rho(x, T^*), P(\rho(x, T^*)))\in W^{1, q},\ u(x, T^*)\in H^1_{\omega},\ d(x, T^*)\in H_n^2, \\
\|\nabla u(x, T^*)\|_{L^2}^2\le M_1, \ \|\Delta d(x, T^*)\|_{L^2}^2\le M_2,
\end{cases}
\end{align*}
and $|d(x, T^*)|=1$ due to $|d_0|=1$ (see (3.7) in \cite{LY16}).
Thus, Lemma \ref{l22} implies that there exists some $T^{**}>T^*$ such that \eqref{z4.1} holds for $T=T^{**}$,
which contradicts the definition of $T^*$. As a result, \eqref{z4.6} follows. By Lemma \ref{l22} and Lemmas \ref{l30} and \ref{l3.10},
 it indicates that $(\rho, u, d)$ is in fact the unique strong solution defined in
$\Omega\times (0, T]$ for any $0<T<T^*=\infty$. The proof of Theorem \ref{thm1} is finished.
\end{proof}

\section*{Acknowledgments}
The authors would like to express their gratitude to the reviewers for careful reading and helpful suggestions which led to an improvement of the original manuscript.

%

\end{document}